\theoremstyle{plain}
\newtheorem{theorem}{Theorem}[section]
\newtheorem*{prop*}{Proposition}
\newtheorem{lem}[theorem]{Lemma}
\newtheorem{claim}{Claim}
\theoremstyle{definition}
\newtheorem{definition}[theorem]{Definition}
\theoremstyle{remark}
\newtheorem{rmk}[theorem]{Remark}
\numberwithin{equation}{section}
\newcommand{\diff}{\,\mathrm{d}}
\newcommand{\R}{\mathbb{R}}
\newcommand{\N}{\mathbb{N}}
\newcommand{\st}{\;:\;}
\newcommand{\floor}[1]{\lfloor #1 \rfloor}
\newcommand{\abs}[1]{\lvert#1 \rvert}
\newcommand{\Bigabs}[1]{\Bigl\lvert#1 \Bigr\rvert}
\newcommand{\compose}{\,\circ\,}
\title{Uniform boundedness principles for Sobolev maps into manifolds}
\author{Antonin Monteil}
\author{Jean Van Schaftingen}
\address{Universit\'e catholique de Louvain\\ Institut de Recherche en Math\'ematique et Physique\\ Chemin du Cyclotron 2 bte L7.01.01\\ 1348 Louvain-la-Neuve\\ Belgium}
\email{Antonin.Monteil@uclouvain.be, Jean.VanSchaftingen@uclouvain.be}
\thanks{A. Monteil is a postdoctoral researcher (charg\'e de recherches) by the Fonds de la Recherche Scientifique--FNRS; J. Van Schaftingen was supported by the Mandat d'Impulsion Scientifique F.4523.17, ``Topological singularities of Sobolev maps'' of the Fonds de la Recherche Scientifique--FNRS}
\begin{document}
\begin{abstract}
Given a connected Riemannian manifold $\mathcal{N}$, an \(m\)--dimensional Riemannian manifold $\mathcal{M}$ which is either compact or the Euclidean space, $p\in [1, +\infty)$ and $s\in (0,1]$, we establish, for the problems of  surjectivity of the trace, of weak-bounded approximation, of lifting and of superposition, that qualitative properties satisfied by every map in a nonlinear Sobolev space $W^{s,p}(\mathcal{M}, \mathcal{N})$ imply  corresponding uniform quantitative bounds. This result is a nonlinear counterpart of the classical Banach--Steinhaus uniform boundedness principle in linear Banach spaces.
\end{abstract}

\keywords{Nonlinear uniform boundedness principle; opening of maps; counterexamples.}

\subjclass[2010]{46T20 (46E35, 46T10, 58D15)}

\maketitle


\section{Introduction}

When \(s \in (0,1)\) and \(p\in [1,+\infty)\), the Sobolev space $W^{s,p}(\mathcal{M}, \mathcal{N})$ of maps between the Riemannian manifolds \(\mathcal{M}\) and \(\mathcal{N}\) can be defined as 
\[
 W^{s, p} (\mathcal{M}, \mathcal{N})
 = \bigl\{ u : \mathcal{M} \to \mathcal{N} \text{ is measurable and } \mathcal{E}_{s, p} (u,\mathcal{M}) < + \infty \bigr\},
\]
where \(\mathcal{E}_{s, p}\) is the \emph{Gagliardo energy} for fractional Sobolev maps defined for a measurable map \(u : \mathcal{M} \to \mathcal{N}\) as
\begin{equation}
\label{eqDefGagliardo}
\mathcal{E}_{s,p}(u,\mathcal{M})=\int_\mathcal{M}\int_\mathcal{M} \frac{d_{\mathcal{N}}\big(u(x),u(y)\big)^p}{d_\mathcal{M}(x,y)^{m+s p}}\diff x\diff y,
\end{equation}
with \(d_\mathcal{N}\) and \(d_\mathcal{M}\) being the geodesic distances induced by the Riemannian metrics of the manifolds \(\mathcal{N}\) and \(\mathcal{M}\) and \(m = \dim \mathcal{M}\). When the manifold \(\mathcal{N}\) is embedded into a Euclidean space \(\R^\nu\) by a bi-Lipschitz embedding and \(\mathcal{N}\) is identified to this embedding's image, we have \(W^{s, p} (\mathcal{M}, \mathcal{N}) = \{ u \in W^{s, p}(\mathcal{M},\R^\nu) : u \in \mathcal{N} \text{ almost everywhere in } \mathcal{M}\}\) and the corresponding energies are comparable.

When \(s = 1\) we can assume by the Nash embedding theorem \cite{Nash1956} that the manifold \(\mathcal{N}\) is isometrically embedded into \(\R^\nu\), and we can define 
\[
 W^{1,p}(\mathcal{M},\mathcal{N})=\bigl\{u\in W^{1,p}(\mathcal{M},\R^\nu)\st u(x)\in\mathcal{N}\text{ for almost every }x\in\mathcal{M}\bigr\}
\]
and 
\[
 \mathcal{E}_{1,p}(u,\mathcal{M})=
  \int_\mathcal{M} |D u|^p.
\]
The space, the energy and the topology on this space are independent of the embedding and can be defined intrinsically \cite{ConventVanSchafingen2016}.

\subsection{Extension of traces} We first consider relationships between a qualitative and quantitative properties for the problem of \emph{surjectivity of the trace}. In the setting of \emph{linear Sobolev spaces}, given \(s \in (0, 1)\), \(p \in (1, +\infty)\) and a manifold \(\mathcal{M}\) which is either compact or the Euclidean space, the classical trace theory states that the restriction of continuous functions in \(C(\mathcal{M} \times [0, + \infty), \R)\)
has a linear continuous extension to the trace operator \(\operatorname{tr} : W^{s+{1}/{p}, p} (\mathcal{M} \times (0, + \infty),\R) \to W^{s, p} (\mathcal{M}, \R)\)
and that the latter trace operator is surjective \citelist{\cite{Triebel1983}*{Theorem 2.7.2}\cite{AdamsFournier2003}*{Theorem 7.39}\cite{diBenedetto2016}*{Chapter 10}}. By the proof of the surjectivity or by a straightforward application of Banach's open mapping theorem (see for example \cite{Brezis2011}*{theorem 2.6}), which can be deduced from the Banach--Steinhaus uniform boundedness principle, every function \(u \in W^{s, p} (\mathcal{M},\R)\) can be written as \(u = \operatorname{tr} U\), with a function \(U \in W^{s + {1}/{p}, p} (\mathcal{M} \times (0, +\infty),\R)\) whose norm is controlled by the norm of the function \(u\). 
When dealing with nonlinear Sobolev spaces \(W^{s, p} (\mathcal{M}, \mathcal{N})\) into a compact Riemannian manifold \(\mathcal{N}\), the trace operator remains a well-defined continuous operator. 
The question of its surjectivity is more delicate: if \(s = 1 - \frac{1}{p}\), \(sp \le m\) and if \(\pi_1 (\mathcal{N}) \simeq \dotsb \simeq \pi_{\floor{p} - 1} (\mathcal{N})\simeq \{0\}\) --- that is for every \(j \in \N\) such that \(j \le p - 1\), every continuous map \(f\) from the \(j\)--dimensional sphere into \(\mathcal{N}\) has a continuous extension from the \((j + 1)\)--dimensional ball to \(\mathcal{N}\) ---, then the trace operator is surjective \cite{HardtLin1987}*{Theorem 6.2}.
This topological condition is almost necessary: if the trace is surjective, then \(\pi_1 (\mathcal{N})\) is finite and \(\pi_2 (\mathcal{N}) \simeq \dotsb \simeq \pi_{\floor{p} - 1} (\mathcal{N}) \simeq \{0\}\) \cite{Bethuel2014Extension} (see also \cite{BethuelDemengel1995}).

In order to study quantitatively the problem, we introduce the \emph{extension energy},
defined for every \(r \in (0, 1]\) and \(q \in [1, +\infty)\) such that \(rq > 1\), for every manifold \(\mathcal{M}\) and every measurable map \(u:\mathcal{M}\to \mathcal{N}\) by 
\[
\mathcal{E}^{\mathrm{ext}}_{r,q}(u,\mathcal{M})
=\inf\,\Bigl\{\mathcal{E}_{r,q}(U,\mathcal{M}\times\R_+)
 \st U\in W^{r, q}(\mathcal{M}\times\R_+,\mathcal{N}) \text{ and } \operatorname{tr} U = u \Bigr\} \in [0, + \infty]
\]
(The condition \(rq > 1\) guarantees that the trace is well-defined.).
In particular, the surjectivity of the trace operator can be reformulated by stating that if \(\mathcal{E}_{s, p} (u, \mathcal{M}) < + \infty\) then \(\mathcal{E}^{\mathrm{ext}}_{s+1/p,p}(u,\mathcal{M}) < + \infty\).

Our first nonlinear uniform boundedness principle states that the surjectivity of the trace implies a linear uniform bound: 

\begin{theorem}\label{extensionthm}
Let \(s, r \in (0, 1]\), \(p, q \in [1, +\infty)\), \(m \in \N_*\), \(\mathcal{M}\) be a Euclidean space or a compact Riemannian manifold of dimension \(m\) and \(\mathcal{N}\) be a connected Riemannian manifold which is compact if either \(sp > m\) or \(s = p = m = 1\). If \(sp = rq - 1\) and if every map in \(W^{s, p} (\mathcal{M}, \mathcal{N})\) is the trace of some map in \(W^{r, q} (\mathcal{M} \times (0, + \infty), \mathcal{N})\), then there exists a constant $C>0$ such that for each measurable function 
$u: \mathbb{B}^m \to \mathcal{N}$ with either \(sp <m\) or \(\mathcal{E}_{s, p} (u, \mathbb{B}^m) \le 1/C\), then
\[
  \mathcal{E}^{\mathrm{ext}}_{r ,q}(u,\mathbb{B}^m)\leq C \, \mathcal{E}_{s,p}(u,\mathbb{B}^m),
\]
where $\mathbb{B}^m$ stands for the unit ball in $\R^m$.
\end{theorem}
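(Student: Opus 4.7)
The plan is to proceed by contradiction via a concentration-and-gluing strategy, which serves as the nonlinear substitute for the Baire-category reasoning behind the classical Banach--Steinhaus theorem. The guiding idea is that if the conclusion fails, one can extract infinitely many uniformly bad instances and fit rescaled copies of them on disjoint small balls inside $\mathbb{B}^m$, thereby producing a single $u\in W^{s,p}(\mathbb{B}^m,\mathcal{N})$ whose extension energy is infinite, in flagrant contradiction with the surjectivity hypothesis.

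The starting observation is a scale invariance: the hypothesis $sp=rq-1$ says exactly $m-sp=m+1-rq$, and this is the common exponent governing both energies under the dilation $v\mapsto v(\cdot/\lambda)$. Indeed, a change of variables gives $\mathcal{E}_{s,p}(v_\lambda,\lambda\mathbb{B}^m)=\lambda^{m-sp}\mathcal{E}_{s,p}(v,\mathbb{B}^m)$, and applying the compatible rescaling $(x,t)\mapsto (x/\lambda,t/\lambda)$ to any extension $U$ of $v$ produces an extension $U_\lambda$ of $v_\lambda$ with $\mathcal{E}_{r,q}(U_\lambda,\lambda\mathbb{B}^m\times\R_+)=\lambda^{m+1-rq}\mathcal{E}_{r,q}(U,\mathbb{B}^m\times\R_+)$. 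So the ratio $\mathcal{E}^{\mathrm{ext}}_{r,q}/\mathcal{E}_{s,p}$ is dilation-invariant, and we may freely prescribe the magnitude of each rescaled piece while preserving an unfavourable ratio.

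Assuming the conclusion fails, I would extract $u_n\in W^{s,p}(\mathbb{B}^m,\mathcal{N})$ (satisfying the smallness assumption when $sp\ge m$) with $\mathcal{E}^{\mathrm{ext}}_{r,q}(u_n,\mathbb{B}^m)\ge 4^n\,\mathcal{E}_{s,p}(u_n,\mathbb{B}^m)$. I would first modify each $u_n$ by an \emph{opening} construction so that it agrees with a fixed base point $p_0\in\mathcal{N}$ on a boundary collar of $\mathbb{B}^m$, at the cost of multiplicative constants on both energies, which is the key tool to make gluing possible. Then I select pairwise disjoint balls $B_n=B(x_n,\lambda_n)\subset\mathbb{B}^m$ and rescale $u_n$ to $v_n$ on $B_n$ with $\mathcal{E}_{s,p}(v_n,B_n)=2^{-n}$, so that by scale invariance $\mathcal{E}^{\mathrm{ext}}_{r,q}(v_n,B_n)\ge 2^n$. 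Define $u$ on $\mathbb{B}^m$ by $u=v_n$ on $B_n$ and $u\equiv p_0$ elsewhere.

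Two estimates close the argument. Because each $v_n$ equals $p_0$ near $\partial B_n$ and takes values in a fixed compact subset of $\mathcal{N}$, a careful choice of the centres $x_n$ and radii $\lambda_n$ makes the diagonal and off-diagonal contributions to $\mathcal{E}_{s,p}(u,\mathbb{B}^m)$ summable, so $u\in W^{s,p}(\mathbb{B}^m,\mathcal{N})$. Conversely, additivity of the Gagliardo integral over the disjoint cylinders $B_n\times\R_+$ gives, for any admissible extension $U$ of $u$, the lower bound $\mathcal{E}_{r,q}(U,\mathbb{B}^m\times\R_+)\ge\sum_n \mathcal{E}^{\mathrm{ext}}_{r,q}(v_n,B_n)\ge\sum_n 2^n=+\infty$, contradicting the hypothesis that $u$ admits an extension in $W^{r,q}$. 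The main obstacle I expect is the opening-and-gluing step: the opening must preserve both energies up to a bounded factor so the ratio stays unfavourable, and the singular cross-ball kernel $|x-y|^{-(m+sp)}$ must be controlled through a delicate balance between the $\lambda_n$ and the separations $|x_i-x_j|$. Additional book-keeping is also required in the critical case $sp=m$, in the threshold case $s=p=m=1$, and when $\mathcal{N}$ is not compact, where one must first reduce to maps whose image sits in a fixed compact subset of $\mathcal{N}$.
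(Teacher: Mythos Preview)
Your proposal follows essentially the same concentration-and-gluing route as the paper, which packages the argument as a general nonlinear uniform boundedness principle for superadditive, scale-covariant energies $\mathcal{G}$ and then applies it with $\mathcal{G}=\mathcal{E}^{\mathrm{ext}}_{r,q}$. There is, however, one genuine subtlety you skate over. The opening construction does \emph{not} produce a map that agrees with a prescribed base point $p_0$ on a collar; it only makes $u_n$ equal to some value $b_n=u_n^{\mathrm{ext}}(a_n)$ determined a posteriori by a mean-value argument, and these $b_n$ vary with $n$. To glue on disjoint balls with a common background value one must first bridge each $b_n$ to a fixed $b_*$ by a short path in $\mathcal{N}$, and the energy of this bridge is not controlled by $\mathcal{E}_{s,p}(u_n)$ when the latter is very small. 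The paper resolves this with a \emph{clustering step}: it places $k^m$ rescaled copies of the opened $u_n$ inside a single ball so that their total $W^{s,p}$ energy dominates the bridge, chooses $k$ so the energy reaches a fixed level $\nu$, and only then rescales; the ratio $\mathcal{G}/\mathcal{E}_{s,p}$ is preserved by superadditivity and scaling. This simultaneously fixes the base point and normalizes the energies, which is exactly what is needed to make your choice $\mathcal{E}_{s,p}(v_n,B_n)=2^{-n}$ compatible with disjoint balls of summable radii.

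Your acknowledgement that the critical case $sp=m$ and the case $s=p=m=1$ require separate book-keeping is well placed: in the critical case the energy is scale-invariant, so the clustering uses instead the vanishing of the $W^{m/p,p}$-capacity of a point to make the bridge arbitrarily cheap; in the supercritical (and $s=p=m=1$) case the compactness of $\mathcal{N}$ is used to pass to a subsequence with $b_n\to b_*$, so that $\mathrm{Lip}(\gamma_n)\to 0$ furnishes the smallness. Apart from this clustering mechanism, your sketch matches the paper's strategy.
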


When \(s = 1 - \frac{1}{p}\), \(r = 1\), \(p = q\), the manifold \(\mathcal{N}\) is compact and \(\pi_1 (\mathcal{N}) \simeq \dotsb \simeq \pi_{\floor{p} - 1} (\mathcal{N})\simeq \{0\}\), the estimate of \cref{extensionthm} was already known as a byproduct of the proof of the surjectivity of the trace by Hardt and Lin \cite{HardtLin1987}*{proof of Theorem 6.2}; some flavour of \cref{extensionthm} is present in Bethuel's counterexample \cite{Bethuel2014Extension}. \Cref{extensionthm} shows that these linear bounds are an essential feature for this class of problems. 

In the case where the trace operator is not surjective, since \(W^{1, q - 1} ( \mathcal{M}, \mathcal{N}) \subset W^{1 - 1/q, q} ( \mathcal{M}, \mathcal{N})\), one can still wonder whether any map in this smaller space is the trace of a map in \(W^{1, q} (\mathcal{M}\times (0,+\infty), \mathcal{N})\). \Cref{extensionthm} shows that this would still imply a weaker uniform estimate.

The smallness restriction on the energy when \(sp \ge m\) is related in the proof to scaling properties of Sobolev energies and ensures that moving a map to smaller scales decreases the Sobolev energy. Moreover, extension results for \(sp \ge m\) are proved by patching a nearest point retraction of an extension together with a smooth extension of a smooth map \cite{BethuelDemengel1995}*{Theorems 1 and 2} for which there does not seem to be an immediate linear bound; when \(sp > m\) a compactness argument leads to a nonlinear estimate of the norm of the extension by the norm of the trace which has no reason to be linear \cite{PetracheVanSchaftingen}*{Theorem 4}. 
When \(s = 1-1/p\) and \(\mathcal{N}\) is a compact Riemannian manifold such that either \(\pi_1 (\mathcal{N})\) is infinite or \(\pi_{j} (\mathcal{N})\not\simeq\{0\}\) for some \(j \le p - 1\), there exists a sequence \((u_n)_{n \in \mathbb{N}}\) in \(W^{1 - 1/p, p}(\mathbb{B}^m, \mathcal{N})\) such that \cite{Bethuel2014Extension}*{(1.36)}
\begin{equation}
\label{eqBethuelGrowth}
  \liminf_{n \to \infty}   \frac{\mathcal{E}^\mathrm{ext}_{1, p} (u_n,\mathbb{B}^m)}{\mathcal{E}^{p/(p - 1)}_{1-1/p, p} (u_n,\mathbb{B}^m)} > 0\quad\text{and}\quad \lim_{n\to\infty}\mathcal{E}_{1-1/p,p}(u_n,\mathbb{B}^m)=+\infty,
\end{equation}
ruling thus out the extension of the estimate of \cref{extensionthm} when \(sp \ge m\) for large Sobolev energies.

In the limit case \(s \to 1\) and \(p \to +\infty\), the problem of quantitative bounds has some analogy with the construction of controlled Lipschitz homotopies to constant maps \cite{Gromov1999}, whose answer depends on the finiteness of the first homotopy groups of the target manifold \(\mathcal{N}\) \cite{FerryWeinberger2013}.

\subsection{Weak-bounded approximation}
Smooth functions are known to be dense in the Sobolev space \(W^{s, p} (\mathcal{M}, \R)\) with respect to the strong topology induced by the norm.
The \emph{strong approximation problem} asks whether any Sobolev map in \(W^{s, p} (\mathcal{M}, \mathcal{N})\) can be approximated in the strong topology by smooth maps in \(C^\infty (\mathcal{M}, \mathcal{N})\). When \(sp \ge m\),  and \(\mathcal{N}\) is compact, the answer is positive and related to the fact that maps in \(W^{s, p} (\mathcal{M}, \mathcal{N})\) are continuous when \(sp > m\) and have vanishing mean oscillation (VMO) when \(sp = m\) \cite{SchoenUhlenbeck1983}*{\S 4}. When \(sp < m\), the answer is delicate and depends on the homotopy type of the pair \((\mathcal{M}, \mathcal{N})\) \citelist{\cite{bethuel1991approximation}\cite{hang2003topology}\cite{BrezisMironescu2015}}. In the particular case where the domain \(\mathcal{M}\) is a ball, a necessary and sufficient condition for strong density is that \(\pi_{\floor{sp}} \simeq \{0\}\), that is, every continuous map \(f \in C (\mathbb{S}^{\floor{sp}}, \mathcal{N})\) is the restriction of some continuous map \(F \in C (\mathbb{B}^{\floor{sp + 1}}, \mathcal{N})\).

When strong density of smooth maps does not hold, one can still wonder whether a map \(u \in W^{s, p} (\mathcal{M}, \mathcal{N})\) has a \emph{weak-bounded approximation}, that is, whether there exists a sequence \((u_i)_{i \in \N}\) in \(C^\infty (\mathcal{M}, \mathcal{N})\) that converges almost everywhere to \(u\) and for which the sequence of Sobolev energies \((\mathcal{E}_{s, p} (u_i))_{i \in \N}\) remains bounded. 
When \(p > 1\) and the manifold \(\mathcal{N}\) is compact, the weak-bounded convergence  is equivalent to the weak convergence induced by the embedding of \(\mathcal{N}\) in the Euclidean space \(\R^\nu\).
In the nonintegral case \(sp \not \in \N\), a map \(u \in W^{s, p} (\mathcal{M}, \mathcal{N})\) has a weak-bounded approximation if and only if it has a strong approximation \cite{bethuel1991approximation}*{Theorem 3 bis}. The remaining \emph{interesting case} is thus the integral case \(sp \in \mathbb{N}\).

Hang and Lin have given a necessary condition on the homotopy type of the pair \((\mathcal{M}, \mathcal{N})\) so that each map in \(W^{1, p} (\mathcal{M}, \mathcal{N})\) has a weak-bounded approximation \cite{hang2003topology}*{Theorem 7.1}.
Every map in \(W^{1, p} (\mathcal{M},\mathcal{N})\) is known to have a weak-bounded approximation when \(\mathcal{N} = \mathbb{S}^p\) \citelist{\cite{BrezisCoronLieb1986}\cite{Bethuel1990}\cite{bethuel1991approximation}*{Theorem 6}} or \(\pi_1 (\mathcal{N}) \simeq \dotsb \simeq \pi_{p - 1} (\mathcal{N}) \simeq \{0\}\) \cite{Hajlasz1994} (see also \citelist{\cite{Hang:2003}*{Proposition 8.3}\cite{BousquetPonceVanSchaftingen2013}*{Theorem 1.7}}), when \(p = 2\) \cite{PakzadRiviere2003} and \(p = 1\) \citelist{\cite{PakzadRiviere2003}\cite{Pakzad2003}}. 
On the other hand, when \(m \ge 4\) there exists a map \(u \in W^{1, 3} (\mathcal{M}, \mathbb{S}^2)\) that does not have any weak-bounded approximation \cite{Bethuel2014Weak}.
In the fractional case, it is known that any map in \(W^{1/2, 2} (\mathbb{S}^2, \mathbb{S}^1)\) has a weak-bounded approximation \cite{Riviere2000}.

Following Bethuel, Brezis and Coron \citelist{\cite{BethuelBrezisCoron1990}\cite{Brezis1991}\cite{Brezis1993}}, we define the \emph{relaxed energy} for every manifold \(\mathcal{M}\) and every measurable map \(u:\mathcal{M}\to \mathcal{N}\) by 
\begin{multline*}
\mathcal{E}^{\mathrm{rel}}_{s,p}(u,\mathcal{M}):=\inf\, \Bigl\{\liminf\limits_{n\to\infty}\mathcal{E}_{s,p}(u_n,\mathcal{M})\st  \text{for each \(n \in \N\), } u_n \in \mathcal{C}^\infty (\mathcal{M},\mathcal{N}) \\ \text{ and }
u_n \to u\text{ almost everywhere as } n \to \infty \Bigr\}.
\end{multline*}
A map \(u \in W^{s, p} (\mathcal{M},\mathcal{N})\) has a weak-bounded approximation in \(W^{s, p} (\mathcal{M},\mathcal{N})\) if and only if \(\mathcal{E}^{\mathrm{rel}}_{s,p}(u,\mathcal{M}) < +\infty\).

\begin{theorem}\label{approximationthm}
Let $s, r\in (0, 1]$, $p, q \in [1, +\infty)$, $m \in \N_*$, \(\mathcal{M}\) be a Euclidean space or a compact Riemannian manifold of dimension \(m\) and let \(\mathcal{N}\) be a connected Riemannian manifold.
If $sp = rq <m$ and if every map \(u \in W^{s, p} (\mathcal{M},\mathcal{N})\) has a weak-bounded approximation in \(W^{r, q} (\mathcal{M},\mathcal{N})\), then there exists a constant $C>0$ such that for each measurable function $u:\mathbb{B}^m \to \mathcal{N}$, one has
\[
  \mathcal{E}^{\mathrm{rel}}_{r,q}(u,\mathbb{B}^m)
  \leq C \,\mathcal{E}_{s,p}(u,\mathbb{B}^m).
\]
\end{theorem}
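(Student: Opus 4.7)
The plan is to argue by contradiction using the scaling-and-superposition strategy characteristic of nonlinear uniform boundedness principles: if the desired linear bound fails, we glue a sequence of worse and worse ``bad'' maps on smaller and smaller scales into a single map in \(W^{s,p}(\mathcal{M}, \mathcal{N})\) whose relaxed \(W^{r,q}\)--energy is infinite, violating the qualitative hypothesis. Concretely, suppose the conclusion fails; then for every \(n \in \N\) one can select a measurable \(u_n : \mathbb{B}^m \to \mathcal{N}\) with \(0 < \mathcal{E}_{s,p}(u_n, \mathbb{B}^m) < + \infty\) and \(\mathcal{E}^{\mathrm{rel}}_{r,q}(u_n, \mathbb{B}^m) \ge n\, \mathcal{E}_{s,p}(u_n, \mathbb{B}^m)\). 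The first step is to \emph{open} each \(u_n\) near \(\partial \mathbb{B}^m\), producing \(\tilde{u}_n : \mathbb{B}^m \to \mathcal{N}\) that agrees with a fixed basepoint \(* \in \mathcal{N}\) on an annular collar of \(\partial \mathbb{B}^m\) while still satisfying
\[
\mathcal{E}_{s,p}(\tilde{u}_n, \mathbb{B}^m) \le C_1\, \mathcal{E}_{s,p}(u_n, \mathbb{B}^m) \quad \text{and} \quad \mathcal{E}^{\mathrm{rel}}_{r,q}(\tilde{u}_n, \mathbb{B}^m) \ge c_2\, \mathcal{E}^{\mathrm{rel}}_{r,q}(u_n, \mathbb{B}^m).
\]

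I would then choose pairwise disjoint balls \(B(x_n, \lambda_n)\) inside a coordinate ball of \(\mathcal{M}\), with radii tuned so that \(a_n := \lambda_n^{m - sp}\, \mathcal{E}_{s,p}(\tilde{u}_n, \mathbb{B}^m)\) is summable while \((n a_n)_n\) is not --- which is possible because \(m - sp > 0\), for instance by normalizing to \(a_n = 1/n^2\). Define \(u : \mathcal{M} \to \mathcal{N}\) to equal \(\tilde{u}_n\bigl((\cdot - x_n)/\lambda_n\bigr)\) on each \(B(x_n, \lambda_n)\) and the constant \(*\) elsewhere. Because \(u\) is identically \(*\) on collars of each \(\partial B(x_n, \lambda_n)\) and outside the union of the small balls, the cross terms across different balls in the Gagliardo integral decay fast enough to be summable; combined with the scaling identity \(\mathcal{E}_{s,p}(\tilde{u}_n((\cdot-x_n)/\lambda_n), B(x_n, \lambda_n)) = \lambda_n^{m - sp}\, \mathcal{E}_{s,p}(\tilde{u}_n, \mathbb{B}^m) = a_n\), this gives \(\mathcal{E}_{s,p}(u, \mathcal{M}) < +\infty\). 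On the other hand, the relaxed energy is superadditive on disjoint open subdomains --- any global smooth sequence \(u_k \to u\) a.e.\ with bounded \(\mathcal{E}_{r,q}(u_k, \mathcal{M})\) restricts to an admissible sequence on each ball --- and the analogous scaling law, together with \(rq = sp\), yields
\[
\mathcal{E}^{\mathrm{rel}}_{r,q}(u, \mathcal{M}) \ge \sum_n \mathcal{E}^{\mathrm{rel}}_{r,q}\bigl(u, B(x_n, \lambda_n)\bigr) = \sum_n \lambda_n^{m - rq}\, \mathcal{E}^{\mathrm{rel}}_{r,q}(\tilde{u}_n, \mathbb{B}^m) \gtrsim \sum_n n\, a_n = +\infty,
\]
contradicting the hypothesis that every map in \(W^{s,p}(\mathcal{M}, \mathcal{N})\) has a weak-bounded approximation in \(W^{r,q}(\mathcal{M}, \mathcal{N})\).

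The main obstacle is the opening step. The upper bound on \(\mathcal{E}_{s,p}(\tilde{u}_n)\) is what the opening technique classically delivers, but we also need to guarantee that opening does not destroy the ``topological obstruction'' responsible for the large relaxed energy of \(u_n\), i.e.\ the lower bound \(\mathcal{E}^{\mathrm{rel}}_{r,q}(\tilde{u}_n) \gtrsim \mathcal{E}^{\mathrm{rel}}_{r,q}(u_n)\). I would realise the opening as a composition \(\tilde{u}_n = u_n \compose \psi\) with a Lipschitz self-map \(\psi : \mathbb{B}^m \to \mathbb{B}^m\) that is essentially a homothety onto an inner ball \(B(0, 1 - \delta)\) and collapses the collar \(\{x \st |x| \ge 1 - \delta\}\) to a boundary point mapped by \(u_n\) to \(*\); a smooth \(W^{r,q}\)-approximation of \(\tilde{u}_n\), restricted to \(B(0, 1 - \delta)\) and composed with the inverse rescaling, then produces a smooth \(W^{r,q}\)-approximation of \(u_n\) with comparable Gagliardo energy, yielding precisely the required inequality and closing the argument.
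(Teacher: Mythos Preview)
Your overall strategy---contradiction via a gliding-hump construction, exploiting superadditivity of $\mathcal{E}^{\mathrm{rel}}_{r,q}$ on disjoint subdomains and its scaling with exponent $m-rq=m-sp$---is exactly the paper's. Two steps in your execution, however, do not go through as written.

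\emph{Opening and the boundary value.} Precomposing $u_n$ with a Lipschitz $\psi$ that is constant on a collar makes $\tilde u_n$ equal there to $b_n:=u_n\bigl(\psi(\text{collar})\bigr)$; nothing forces $u_n$ to hit your fixed basepoint~$*$ anywhere, so in general $b_n\ne *$ and the $b_n$ vary with $n$, which prevents you from defining the patched map as a single constant off the balls. Your lower-bound argument also has a domain problem: for the ``inverse rescaling'' to recover an approximation of $u_n$ on the \emph{full} ball $\mathbb{B}^m$, your $\psi$ must restrict to a surjective dilation of an inner ball onto $\mathbb{B}^m$, which forces you to evaluate $u_n$ on $\partial\mathbb{B}^m$; if instead $\psi$ maps into a proper subball $B(0,1-\epsilon)$, you only recover $\mathcal{E}^{\mathrm{rel}}_{r,q}\bigl(u_n,B(0,1-\epsilon)\bigr)$, which can be arbitrarily small compared to $\mathcal{E}^{\mathrm{rel}}_{r,q}(u_n,\mathbb{B}^m)$. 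The paper resolves all of this at once by first \emph{extending} $u_n$ by inversion to $\mathbb{B}^m_3$ and then opening on the enlarged domain with a change of variable equal to the identity on $\mathbb{B}^m$: the opened map satisfies $u_n^{\mathrm{opn}}|_{\mathbb{B}^m}=u_n$, so $\mathcal{E}^{\mathrm{rel}}_{r,q}(u_n^{\mathrm{opn}},\mathbb{B}^m)=\mathcal{E}^{\mathrm{rel}}_{r,q}(u_n,\mathbb{B}^m)$ trivially, and is constant $b_n$ outside $\mathbb{B}^m_4$. A separate smooth map on an outer annulus then joins $b_n$ to a fixed $b_*$, using only the connectedness of $\mathcal{N}$.

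\emph{Tuning the radii.} Normalising $a_n=\lambda_n^{m-sp}\mathcal{E}_{s,p}(\tilde u_n)=1/n^2$ forces $\lambda_n=\bigl(n^2\mathcal{E}_{s,p}(\tilde u_n)\bigr)^{-1/(m-sp)}$, which blows up whenever $\mathcal{E}_{s,p}(\tilde u_n)\to 0$---and nothing in the contradiction hypothesis prevents this (consider $\mathcal{E}_{s,p}(u_n)=e^{-2n}$, $\mathcal{E}^{\mathrm{rel}}_{r,q}(u_n)=e^{-n}$). You then cannot pack the balls $B(x_n,\lambda_n)$ into a fixed chart, and no choice of subsequence and bounded radii helps: if $\mathcal{E}^{\mathrm{rel}}_{r,q}(u_n)$ decays geometrically, $\sum_n\lambda_n^{m-sp}\mathcal{E}^{\mathrm{rel}}_{r,q}(u_n)$ converges for every bounded $(\lambda_n)$. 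The paper inserts a \emph{clustering} step before the final gluing: it places $k(n)^m$ disjoint rescaled copies of $u_n^{\mathrm{opn}}$ on a grid of balls of radius $\sim 1/k(n)$ (together with the connecting map to $b_*$), so that the aggregate Sobolev energy scales like $k(n)^{sp}\mathcal{E}_{s,p}(u_n^{\mathrm{opn}})$, and chooses $k(n)$ and a final contraction to normalise every clustered map to the \emph{same} Sobolev energy. Only after this normalisation does the dyadic patching you describe go through.
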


\Cref{approximationthm} extends trivially to the case where \(sp \ge m\) and the target manifold \(\mathcal{N}\) is compact, since every map has then a strong approximation and thus for every \(u\in W^{s, p} (\mathbb{B}^m,\mathcal{N})\),
\(\mathcal{E}^\mathrm{rel}_{s, p} (u,\mathbb{B}^m) = \mathcal{E}_{s, p} (u,\mathbb{B}^m)\).
In the situation where \(sp = m\) and the manifold \(\mathcal{N}\) is not compact, either \(\mathcal{N}\) is sufficiently nondegenerate at infinity to satisfy the trimming property that implies that every map has then a strong approximation \cite{BousquetPonceVanSchaftingen2017} and therefore the relaxed energy coincides with the Sobolev energy, or the trimming property fails and there exists a map that has no weak-bounded approximation \cite{BousquetPonceVanSchaftingen2018}.

\Cref{approximationthm} also implies that if every map in \(W^{1, p} (\mathcal{M}, \mathcal{N})\) has a weak-bounded approximation in the larger space \(W^{s, {p}/{s}}  (\mathcal{M}, \mathcal{N})\), with \(s\in (0,1)\), then a similar uniform boundedness principle has to hold.

When \(s = r = 1\) and \(p=q\), \cref{approximationthm} is due to Hang and Lin \cite{Hang:2003}*{Theorem 9.6}; \cref{approximationthm} is also present in the final step of the construction of the counterexample to the weak-bounded approximation in \(W^{1, 3} (\mathcal{M}, \mathbb{S}^2)\) \cite{Bethuel2014Weak}.

\subsection{Lifting}
Another situation in which Sobolev maps enjoy a uniform bound principles is the lifting problem.
Given a manifold \(\mathcal{F}\) and a Lipschitz map \(\pi : \mathcal{F} \to \mathcal{N}\), it can be checked immediately that if \(\varphi \in W^{s, p} (\mathcal{M}, \mathcal{F})\), then \(\pi \compose \varphi \in W^{s, p} (\mathcal{M}, \mathcal{N})\). The lifting problem asks whether every map \(u \in W^{s, p} (\mathcal{M}, \mathcal{N})\) can be lifted to a map \(\varphi \in W^{s, p} (\mathcal{M}, \mathcal{F})\) such that \(\pi \compose \varphi = u\) on \(\mathcal{M}\), that is, there exists \(\varphi\) such that the diagram
\[
\xymatrix{
\mathcal{M}\ar[r]^u \ar[d]_\varphi&\mathcal{N}\\
\mathcal{F} \ar[ru]_\pi
}
\]
commutes. In other words, we wonder whether the composition operator \(\varphi\in W^{s, p} (\mathcal{M}, \mathcal{F})\to \pi \compose \varphi \in W^{s, p} (\mathcal{M}, \mathcal{N})\) is surjective.

This lifting problem has been the object of a detailed study when \(\mathcal{N}\) is the unit circle \(\mathbb{S}^1\) and \(\pi : \R\to\mathbb{S}^1\) is its \emph{universal covering}, defined by \(\pi (t) = (\cos t, \sin t)\) for every \(t \in \R\).
In this case, when the manifold \(\mathcal{M}\) is simply-connected, every map in \(W^{s, p} (\mathcal{M}, \mathbb{S}^1)\) admits a lifting if and only if either \(s = 1\) and \(p \ge 2\), or \(s < 1\) and \(sp < 1\), or \(s < 1\) and \(sp \ge m\) \cite{bourgain2000lifting}.
Similar results hold for the universal covering \(\pi : \mathcal{F} \to \mathcal{N}\) when the fundamental group \(\pi_1 (\mathcal{N})\) is infinite \cite{bethuel2007some};
when \(\pi_1(\mathcal{N})\) is a nontrivial finite group, it is not yet known whether the condition \(sp \not \in [1, m)\) is necessary when \(s < 1\).
These results apply to the case of the universal covering of the projective space \(\mathbb{R}P^m\) by the sphere \(\mathbb{S}^m\) when \(m \ge 2\) \citelist{\cite{Mucci2012}\cite{BallZarnescu2011}}.

Another lifting problem that has been studied is the \emph{lifting problem for fibrations}. For the Hopf fibration \(\pi : \mathbb{S}^3 \to \mathbb{S}^2\), in contrast with the universal covering, some gauge invariance property shows that the existence of one lifting implies the presence of a continuum of liftings and a lifting is known to exist when \(s = 1\) and  \(1 \le p < 2 \le m\) or \(p \ge m \ge 3\) or \(p > m = 2\) \cite{bethuel2007some}, and known to be impossible for some map if \(2 \le p < m\) \citelist{\cite{bethuel2007some}\cite{Bethuel2014Weak}}.

To quantify the lifting of a Sobolev map we define the \emph{lifting energy} of a map \(u:\mathcal{M}\to\mathcal	{N}\) by  
\[
\mathcal{E}^\mathrm{lift}_{s,p}(u,\mathcal{M}):=\inf\,\left\{\mathcal{E}_{s,p}(\varphi,\mathcal{M})\st \varphi : \mathcal{M} \to \mathcal{F} \text{ is measurable and } \pi \compose\varphi = u \right\}.
\]
When \(s = 1\) and \(p \ge 1\), the lifting \(W^{1, p} (\mathcal{M}, \mathbb{S}^1)\) preserves the Sobolev energy; when \(s < 1\) and \(sp < 1\), the existing bounds on liftings of maps in \(W^{s, p} (\mathcal{M}, \mathbb{S}^1)\) are linear \citelist{\cite{bourgain2000lifting}} (see also \cite{MironescuMolnar2015}) and suggest the following uniform boundedness principle:

\begin{theorem}\label{liftingthm}Let $s,r\in (0, 1]$, $p,q\in [1, +\infty)$, $m \in \N_*$, \(\mathcal{M}\) be a Euclidean space or a compact Riemannian manifold of dimension \(m\), \(\mathcal{N}\) and \(\mathcal{F}\) be Riemannian manifolds with \(\mathcal{N}\) connected and, if either \(sp > m\) or \(s = p = m = 1\), compact, and \(\pi : \mathcal{F} \to \mathcal{N}\). If \(rq = sp\) and if for every map in $W^{s,p}(\mathcal{M},\mathcal{N})$ there exists \(\varphi \in W^{r, q} (\mathcal{M}, \mathcal{F})\) such that \(\pi \compose \varphi = u\), then there exists a constant $C>0$ such that for each measurable function \(u:\mathbb{B}^m\to \mathcal{N}\), if either \(sp < m\) or  \(\mathcal{E}_{s,p}(u,\mathbb{B}^m) \le 1/C\),
\[
  \mathcal{E}^\mathrm{lift}_{r,q}(u,\mathbb{B}^m)\leq C\, \mathcal{E}_{s,p}(u,\mathbb{B}^m).
\]
\end{theorem}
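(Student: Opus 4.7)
The proof is structured by contradiction, following the scheme used for \cref{extensionthm,approximationthm}. Suppose no such constant $C$ exists. Then one can extract a sequence $(u_n)_{n \in \N}$ of measurable maps from $\mathbb{B}^m$ to $\mathcal{N}$ (in the critical or supercritical case $sp \ge m$, satisfying the smallness condition with $\mathcal{E}_{s, p}(u_n, \mathbb{B}^m) \le 2^{-n}$) for which $\mathcal{E}^{\mathrm{lift}}_{r, q}(u_n, \mathbb{B}^m) \ge 4^n\,\mathcal{E}_{s, p}(u_n, \mathbb{B}^m)$. The plan is to concentrate rescaled copies of the $u_n$'s into pairwise disjoint small balls of $\mathbb{B}^m$ and to paste them into a single map $u_\infty \in W^{s, p}(\mathbb{B}^m, \mathcal{N})$ whose lifting, provided by the hypothesis, will yield liftings of each individual $u_n$ with contradicting bounds.

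The first ingredient is the scaling identity: for the homothety $v_\rho(y) = u((y - x_0)/\rho)$ on $B(x_0, \rho)$, one has $\mathcal{E}_{s, p}(v_\rho, B(x_0, \rho)) = \rho^{m - sp}\,\mathcal{E}_{s, p}(u, \mathbb{B}^m)$, and likewise for $\mathcal{E}^{\mathrm{lift}}_{r, q}$ with exponent $m - rq$. Since $rq = sp$, the ratio $\mathcal{E}^{\mathrm{lift}}_{r, q}/\mathcal{E}_{s, p}$ is scale invariant. When $sp < m$, one selects pairwise disjoint balls $\overline{B(x_n, \rho_n)} \subset \mathbb{B}^m$ with $\rho_n > 0$ small enough so that $\sum_n \rho_n^{m - sp}\,\mathcal{E}_{s, p}(u_n, \mathbb{B}^m) < +\infty$; when $sp \ge m$ the summability of the total Sobolev energy is built into the smallness condition and no rescaling is required beyond placing the $u_n$ in disjoint balls of bounded size.

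Next, invoke the opening-of-maps technique advertised in the keywords. Fix a base point $a \in \mathcal{N}$, and applying the lifting hypothesis to the constant map $a$, fix a preimage $b \in \pi^{-1}(a) \subset \mathcal{F}$. On a thin annular shell inside each $B(x_n, \rho_n)$, modify the rescaled map $\tilde u_n$ so that it becomes identically equal to $a$ near $\partial B(x_n, \rho_n)$, while coinciding with a further rescaled copy of $u_n$ on a slightly smaller concentric ball $B(x_n, \rho_n')$. A standard estimate bounds the energetic cost of the opening by a universal multiple of $\mathcal{E}_{s, p}(\tilde u_n, B(x_n, \rho_n))$. Define $u_\infty$ to be the opened $\tilde u_n$ on each $B(x_n, \rho_n)$, and $u_\infty \equiv a$ on $\mathbb{B}^m \setminus \bigcup_n \overline{B(x_n, \rho_n)}$. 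Standard cross-ball estimates, together with the $\ell^1$ control above, imply that $u_\infty \in W^{s, p}(\mathbb{B}^m, \mathcal{N})$.

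By the hypothesis, $u_\infty$ admits a lifting $\varphi_\infty \in W^{r, q}(\mathbb{B}^m, \mathcal{F})$ with $\pi \compose \varphi_\infty = u_\infty$. Restricting $\varphi_\infty$ to the inner ball $B(x_n, \rho_n')$, where the opened map is a homothetic copy of $u_n$, and unscaling by the corresponding homothety yields a measurable map $\varphi_n : \mathbb{B}^m \to \mathcal{F}$ with $\pi \compose \varphi_n = u_n$; the scale invariance enforced by $rq = sp$ gives $\mathcal{E}^\mathrm{lift}_{r, q}(u_n, \mathbb{B}^m) \le \mathcal{E}_{r, q}(\varphi_n, \mathbb{B}^m) \le C\,\mathcal{E}_{r, q}(\varphi_\infty, B(x_n, \rho_n))$, and the right-hand side is summable in $n$ since $\varphi_\infty \in W^{r, q}$. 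This is incompatible with the divergence $4^n \,\mathcal{E}_{s, p}(u_n, \mathbb{B}^m)$, delivering the contradiction. The principal technical obstacle lies in executing the opening step so that the local restriction of $\varphi_\infty$ to each ball $B(x_n, \rho_n')$ genuinely descends, after rescaling, to a global measurable lifting of $u_n$ on $\mathbb{B}^m$: one must coordinate the choice of base point $a$ and preimage $b$ so that the constant shell carries no topological or measurability obstruction between adjacent pieces, which is the nonlinear analogue of the central difficulty already encountered in \cref{extensionthm,approximationthm}.
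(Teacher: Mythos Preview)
Your strategy coincides with the paper's: one applies \cref{theoremCounterexample} (equivalently \cref{uniform}) to the energy \(\mathcal{G}=\mathcal{E}^{\mathrm{lift}}_{r,q}\), after checking that this energy is monotone under restriction, superadditive on sets with disjoint closures, and scales with exponent \(m-rq=m-sp\). What you have written is essentially an unfolding of that abstract argument for this particular \(\mathcal{G}\). Two points, however, deserve correction.

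First, a genuine gap: you omit the \emph{clustering} step (Step~3 in the proof of \cref{theoremCounterexample}). Choosing radii \(\rho_n\) ``small enough so that \(\sum_n \rho_n^{m-sp}\mathcal{E}_{s,p}(u_n,\mathbb{B}^m)<+\infty\)'' does not by itself force \(\sum_n \rho_n^{m-sp}\mathcal{E}^{\mathrm{lift}}_{r,q}(u_n,\mathbb{B}^m)=+\infty\), which is what the contradiction needs. If \(\mathcal{E}_{s,p}(u_n,\mathbb{B}^m)\to 0\) rapidly, the constraint that the balls fit inside \(\mathbb{B}^m\) may prevent any such choice of \(\rho_n\). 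The paper remedies this by first placing \(k_n\) disjoint rescaled copies of each \(u_n\) so as to raise its Sobolev energy to a fixed level \(\nu\) while preserving the ratio \(\mathcal{G}/\mathcal{E}_{s,p}\); only then are the resulting clustered maps patched at dyadic scales. In the critical case \(sp=m\) one also needs \cref{nullCapacity} to connect to the base point with vanishing energy cost. You should add this step. (You also need, at the very end, to transplant \(u_\infty\) from \(\mathbb{B}^m\) to \(\mathcal{M}\) via a chart before invoking the lifting hypothesis, as in the proof of \cref{extensionthm}.)

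Second, your closing paragraph misidentifies the difficulty. There is \emph{no} obstruction in ``descending'' the restriction of \(\varphi_\infty\) to a lifting of \(u_n\): if \(\pi\compose\varphi_\infty=u_\infty\) on \(\mathbb{B}^m\), then automatically \(\pi\compose(\varphi_\infty\vert_{B(x_n,\rho_n')})=u_\infty\vert_{B(x_n,\rho_n')}\), and rescaling gives a measurable lifting of \(u_n\) on \(\mathbb{B}^m\). No coordination of the preimage \(b\in\pi^{-1}(a)\) is needed; indeed the value of \(\varphi_\infty\) on the constant shells is irrelevant. In the paper's language this is simply the superadditivity of \(\mathcal{E}^{\mathrm{lift}}_{r,q}\), which is immediate from the definition. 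The actual work lies elsewhere: in the opening lemma (\cref{openingLemma}), in the fractional gluing estimates (\cref{lemmaBallToRN}, \cref{subadditive}), and in the clustering normalisation you skipped.
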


The restriction \(sp <m\) for avoiding the smallness condition comes again from the scaling properties of Sobolev spaces. For the lifting problem of maps in \(W^{s, p} (\mathcal{M}, \mathbb{S}^1)\), it is known that when \(s \in (0, 1)\) and \(p \in (1, + \infty)\) with \(sp>1\), there exists a sequence of maps \((u_n)_{n \in \N}\) in \(W^{s, p} (\mathcal{M}, \mathbb{S}^1)\) such that \citelist{\cite{Merlet2006}*{Theorem 1.1}\cite{MironescuMolnar2015}*{Proposition 5.7}}
\begin{equation}
\label{nonestimateMMM}
\liminf_{n \to \infty} \frac{\mathcal{E}^\mathrm{lift}_{s,p} (u_n)}{\mathcal{E}_{s, p} (u_n)^{1/s}} > 0\quad\text{and}\quad \lim_{n\to\infty}\mathcal{E}_{s,p}(u_n)=+\infty.
\end{equation}
The exponent \(1/s\) in the denominator rules out the possibility of a linear upper bound when \(s < 1\).

\subsection{Superposition operator}
The \emph{superposition} problem asks whether for a given function \(f : \mathcal{N} \to \mathcal{F}\), one has \(f \compose u \in W^{r, q}(\mathcal{M}, \mathcal{F})\) for each \(u \in W^{s, p} (\mathcal{M}, \mathcal{N})\). In analogy to the previous theorems, we have a uniform bound principle:

\begin{theorem}\label{superposition}
Let \(s,r\in (0,1]\), \(p,q\in [1,+\infty)\), \(m\in\N_*\), let \(\mathcal{M}\) be an \(m\)-dimensional Riemannian manifold which is either \(\R^m\) or compact, \(\mathcal{N}\) and \(\mathcal{F}\) be Riemannian manifolds and assume that \(\mathcal{N}\) is connected and, if either \(sp > m\) or \(s = p = m = 1\), compact. If \(rq=sp<m\) and if a measurable map \(f:\mathcal{N}\to\mathcal{F}\) is such that \(f\compose u\in W^{r,q}(\mathcal{M},\mathcal{F})\) whatever \(u\in W^{s,p}(\mathcal{M},\mathcal{N})\), then there exists a constant \(C>0\) such that for every measurable function \(u:\mathbb{B}^m\to\mathcal{N}\), if either \(sp < m\) or \(\mathcal{E}_{s, p} (u) \le 1/C\), then
\[
\mathcal{E}_{r,q}(f\compose u,\mathbb{B}^m)\le C\,\mathcal{E}_{s,p}(u,\mathbb{B}^m).
\]
\end{theorem}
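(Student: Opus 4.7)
The plan is to argue by contradiction, along the same lines the authors have presumably followed for the three previous theorems. Suppose that no constant $C$ works; then for every $n\in\N$ one can find a measurable $u_n:\mathbb{B}^m\to\mathcal{N}$ with finite Sobolev energies satisfying
\[
\mathcal{E}_{r,q}(f\compose u_n,\mathbb{B}^m)\ge 4^n\,\mathcal{E}_{s,p}(u_n,\mathbb{B}^m).
\]
The goal is to glue appropriately rescaled modifications of the $u_n$'s into a single $u\in W^{s,p}(\mathcal{M},\mathcal{N})$ for which $\mathcal{E}_{r,q}(f\compose u,\mathcal{M})=+\infty$, contradicting the hypothesis.

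The first step is to \emph{open} each $u_n$. Fix a basepoint $c_0\in\mathcal{N}$, available by connectedness of $\mathcal{N}$. Using the opening technique developed earlier in the paper, replace $u_n$ by a modification $\tilde u_n$ that equals $c_0$ on a collar of $\partial\mathbb{B}^m$, coincides with $u_n$ on a slightly smaller ball, and satisfies both
\[
\mathcal{E}_{s,p}(\tilde u_n,\mathbb{B}^m)\le C_0\,\mathcal{E}_{s,p}(u_n,\mathbb{B}^m)\quad\text{and}\quad\mathcal{E}_{r,q}(f\compose\tilde u_n,\mathbb{B}^m)\le C_0\,\mathcal{E}_{r,q}(f\compose u_n,\mathbb{B}^m).
\]
The second inequality is the delicate one since $f$ is only measurable: the opening must be carried by redistributing values of $u_n$ that are already present on a nearby annulus, rather than inserting new values on the collar on which $f$ could be wildly unbounded. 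Renaming $\tilde u_n$ as $u_n$ and absorbing $C_0$ into the blow-up factor, we keep $\mathcal{E}_{r,q}(f\compose u_n)\ge 2^n\mathcal{E}_{s,p}(u_n)$, with $u_n=c_0$ near $\partial\mathbb{B}^m$.

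Next, I would rescale. Since $sp=rq<m$, the map $u_n^\lambda(x):=u_n(x/\lambda)$ on $\lambda\mathbb{B}^m$ satisfies $\mathcal{E}_{s,p}(u_n^\lambda,\lambda\mathbb{B}^m)=\lambda^{m-sp}\mathcal{E}_{s,p}(u_n,\mathbb{B}^m)$, and similarly for the $\mathcal{E}_{r,q}$-energy of $f\compose u_n^\lambda$. Choose $\lambda_n>0$ so that $\mathcal{E}_{s,p}(u_n^{\lambda_n},\lambda_n\mathbb{B}^m)=2^{-n}$ and $\sum_n\lambda_n^m$ is small enough for disjoint translates $B(x_n,\lambda_n)\subset\mathbb{B}^m$ to exist, with pairwise distances decaying no faster than a fixed geometric sequence. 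The blow-up being scale-invariant, $\mathcal{E}_{r,q}(f\compose u_n^{\lambda_n},\lambda_n\mathbb{B}^m)\ge 1$ for each $n$. Define
\[
u(x):=\begin{cases}u_n^{\lambda_n}(x-x_n)&\text{if }x\in B(x_n,\lambda_n),\\ c_0&\text{otherwise},\end{cases}
\]
which is well-defined, measurable and equal to $c_0$ near $\partial\mathbb{B}^m$ because each $u_n^{\lambda_n}$ is $c_0$ near $\partial B(x_n,\lambda_n)$; extend $u$ by $c_0$ to all of $\mathcal{M}$.

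A splitting of the Gagliardo double integral into diagonal contributions on each $B(x_n,\lambda_n)$ (which sum to $\sum 2^{-n}<\infty$) and cross contributions between distinct balls bounds $\mathcal{E}_{s,p}(u,\mathcal{M})$; the cross terms are controlled by the triangle inequality through $c_0$ together with the prescribed separation of the balls, yielding $\mathcal{E}_{s,p}(u,\mathcal{M})<\infty$, so that $u\in W^{s,p}(\mathcal{M},\mathcal{N})$. On the other hand,
\[
\mathcal{E}_{r,q}(f\compose u,\mathcal{M})\ge\sum_n\mathcal{E}_{r,q}(f\compose u_n^{\lambda_n},B(x_n,\lambda_n))\ge\sum_n 1=+\infty,
\]
contradicting the assumption $f\compose u\in W^{r,q}(\mathcal{M},\mathcal{F})$. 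The main obstacle is the opening step in the presence of a merely measurable post-composition, which rules out any naive Lipschitz interpolation; once the appropriate opening lemma is available, the scaling and the gluing bookkeeping follow the template already used in the proofs of the extension, weak-bounded approximation, and lifting theorems.
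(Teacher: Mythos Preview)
Your overall strategy---contradiction, opening, rescaling, patching---matches the paper's, which simply applies the general nonlinear uniform boundedness principle (\cref{theoremCounterexample}) to the energy \(\mathcal{G}(u,A):=\mathcal{E}_{r,q}(f\compose u,A)\). But you have misidentified the main obstacle. The inequality you call ``delicate'', \(\mathcal{E}_{r,q}(f\compose\tilde u_n,\mathbb{B}^m)\le C_0\,\mathcal{E}_{r,q}(f\compose u_n,\mathbb{B}^m)\), points in the wrong direction and is not needed. To preserve the blow-up ratio after opening you need a \emph{lower} bound on \(\mathcal{E}_{r,q}(f\compose\tilde u_n)\), and that is automatic from monotonicity of the Gagliardo energy under restriction, since \(\tilde u_n\) coincides with \(u_n\) on an inner ball. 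In the paper's organization one first \emph{extends} \(u_n\) to a larger ball (\cref{extensionLemma}) and then opens outside \(\mathbb{B}^m\), so the opened map equals \(u_n\) on all of \(\mathbb{B}^m\) and the lower bound is an equality; without this extension your opened map only agrees with \(u_n\) on \(\mathbb{B}^m_\rho\), and \(\mathcal{G}(u_n,\mathbb{B}^m_\rho)\) is not a priori comparable to \(\mathcal{G}(u_n,\mathbb{B}^m)\). Note also that the opening lemma does not produce the value \(c_0\) on the collar but rather some value \(b_n\in u_n^{\mathrm{ext}}(\mathbb{B}^m)\); bridging \(b_n\) to a fixed \(b_*\) is done afterwards by composing a Lipschitz path in \(\mathcal{N}\) with a scalar cut-off (Step~3 of the proof of \cref{theoremCounterexample}). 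The crucial point is that the opening, the bridging, and the patching are all performed on \(u\); the functional \(\mathcal{G}\) is treated as a black box and only bounded from below via its monotonicity, superadditivity and scaling. The mere measurability of \(f\) is therefore irrelevant.

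There is a secondary, genuine gap in your rescaling step: the scale \(\lambda_n\) cannot in general be chosen to satisfy simultaneously \(\mathcal{E}_{s,p}(u_n^{\lambda_n})=2^{-n}\) and \(\sum_n\lambda_n^m\) small, since the first condition already determines \(\lambda_n\), and if \(\mathcal{E}_{s,p}(u_n)\) is very small then \(\lambda_n\) is forced to be large. The paper handles this with a \emph{clustering} step (Step~3 in the proof of \cref{theoremCounterexample}): one places \(k^m\) translated rescaled copies of the opened map inside a ball of fixed radius, raising the Sobolev energy by a factor \(k^{sp}\) and thereby normalizing it to a prescribed value before the final dyadic patching.
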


\Cref{superposition} implies that, with the same assumptions and for each \(x, y \in \mathcal{N}\),
\(d_\mathcal{F} (f (x), f (y)) \le C' d_\mathcal{N} (x, y)^{p/q}\) when \(sp < m\) or \(d_{\mathcal{N}}(x, y)\) remains small (see \cref{actingCondition}). In particular, when \(p>q\), the map \(f\) is constant. When \(p = q\), \(f\) is Lipschitz; this necessary condition is well-known for Sobolev functions \citelist{\cite{MarcusMizel1979}\cite{Igari1965}\cite{Bourdaud1993}\cite{BourdaudSickel2011}\cite{Allaoui2009}}.

\subsection{General uniform boundedness principle} 
The similarity of the statements of \cref{extensionthm,approximationthm,liftingthm,superposition}, is not a coincidence, but comes from the common properties of the extension, relaxed, lifting and composition energies, which are nonnegative functionals that do not increase under the restriction of functions.

\begin{definition}[Energy]
The map \(\mathcal{G}\) is an energy over \(\R^m\) with state space \(\mathcal{N}\) whenever \(\mathcal{G}\) maps every open set \(A \subset \R^m\) and every measurable map \(u : A \to \mathcal{N}\) to some \(\mathcal{G}\,(u, A)\in [0, + \infty]\) such that if $A \subseteq B$ are open sets and if the map $u: B \to \mathcal{N}$ is measurable, then one has $\mathcal{G}\,(u\vert_A,A)\leq \mathcal{G}\,(u,B)$.
\end{definition}

For the sake of simplicity, when the map $u:B\to\mathcal{N}$ is measurable and $A\subset B\subset\R^m$ are open, we write $\mathcal{G}\,(u,A)$ rather than $\mathcal{G}\,(u\vert_A,A)$.

\begin{theorem}[Nonlinear uniform boundedness principle]\label{uniform}
Let \(m \in \N_*\), \(s \in (0, 1]\), \(p \in [1, +\infty)\), \(\mathcal{N}\) be a connected Riemannian manifold which, if either \(sp > m\) or \(s = p = m = 1\), is compact, and let \(\mathcal{G}\) be an  energy over $\R^m$ with state space $\mathcal{N}$. 
Assume that for every measurable map \(u:\R^m\to\mathcal{N}\)
\begin{enumerate}[(i)]
\item (superadditivity) if the sets \(A,B\subset\R^m\) are open and if  \(\Bar{A}\cap \Bar{B} = \emptyset\), then
\[
\mathcal{G}\,(u,A\cup B)\geq \mathcal{G}\,(u,A)+\mathcal{G}\,(u,B),
\]

\item (scaling) for all $\lambda>0$, $h\in\R^m$ and any open set $A\subset\R^m$,
\[
\mathcal{G}\,(u ,h+\lambda A)=\lambda^{m-sp} \mathcal{G}\,(u(h + \lambda \cdot),A).
\]
\end{enumerate}
If for every measurable function \(u : \mathbb{B}^m \to \mathcal{N}\), \(\mathcal{E}_{s, p} (u, \mathbb{B}^m) < + \infty\) implies \(\mathcal{G}\,(u,\mathbb{B}^m) < + \infty\) and \(\mathcal{E}_{s, p} (u, \mathbb{B}^m) = 0\) implies \(\mathcal{G}\,(u,\mathbb{B}^m) = 0\), then there exists a constant $C \in [0, +\infty)$ such that for every measurable map $u:\mathbb{B}^m \to \mathcal{N}$, if either \(sp < m\) or \(\mathcal{E}_{s, p} (u, \mathbb{B}^m) \le 1/C\),
\begin{equation*}
\label{conclusion}
\mathcal{G}\,(u,\mathbb{B}^m)\leq C \,\mathcal{E}_{s,p}(u,\mathbb{B}^m).
\end{equation*}
\end{theorem}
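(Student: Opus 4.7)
The plan is to argue by contradiction. If no such constant \(C\) exists, one can extract a sequence \((u_n)_{n \in \N}\) of measurable maps \(\mathbb{B}^m \to \mathcal{N}\) with \(\mathcal{E}_{s,p}(u_n, \mathbb{B}^m) < +\infty\) and \(\mathcal{G}(u_n, \mathbb{B}^m) \ge 4^n\, \mathcal{E}_{s,p}(u_n, \mathbb{B}^m)\); in the range \(sp \ge m\) one additionally arranges \(\mathcal{E}_{s,p}(u_n, \mathbb{B}^m) \le 4^{-n}\). The goal is to glue the \(u_n\) into a single map \(U : \mathbb{B}^m \to \mathcal{N}\) with \(\mathcal{E}_{s,p}(U, \mathbb{B}^m) < +\infty\) but \(\mathcal{G}(U, \mathbb{B}^m) = +\infty\), contradicting the qualitative implication \(\mathcal{E}_{s,p}(u, \mathbb{B}^m) < +\infty \Rightarrow \mathcal{G}(u, \mathbb{B}^m) < +\infty\).

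The central preparatory step is an \emph{opening} construction (one of the main techniques of this paper): for each \(n\) one produces \(\widetilde{u}_n : \mathbb{B}^m \to \mathcal{N}\) such that (i) \(\widetilde{u}_n(x) = u_n(2x)\) on the concentric half-ball \(\tfrac12 \mathbb{B}^m\); (ii) \(\widetilde{u}_n\) equals a fixed basepoint \(c_0 \in \mathcal{N}\) in a collar neighborhood of \(\partial \mathbb{B}^m\); and (iii) \(\mathcal{E}_{s,p}(\widetilde{u}_n, \mathbb{B}^m) \le C_0 \, \mathcal{E}_{s,p}(u_n, \mathbb{B}^m)\) for a universal \(C_0\). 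The scaling axiom on \(\mathcal{G}\) then yields
\[
\mathcal{G}(\widetilde{u}_n, \mathbb{B}^m) \ge \mathcal{G}(\widetilde{u}_n, \tfrac12 \mathbb{B}^m) = 2^{sp - m}\, \mathcal{G}(u_n, \mathbb{B}^m),
\]
so the blow-up of the ratio \(\mathcal{G}/\mathcal{E}_{s,p}\) is preserved through opening up to a universal multiplicative constant.

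One then picks pairwise disjoint balls \(B(h_n, \lambda_n) \subset \mathbb{B}^m\) with disjoint closures and defines
\[
U(x) := \begin{cases} \widetilde{u}_n\bigl((x - h_n)/\lambda_n\bigr) & \text{if } x \in B(h_n, \lambda_n), \\ c_0 & \text{otherwise.} \end{cases}
\]
The collar (ii) makes \(U\) constant in a uniform-width neighborhood of each \(\partial B(h_n, \lambda_n)\), which bounds the nonlocal Gagliardo cross-terms between distinct balls and with the complement. Combining scaling, the opening estimate (iii) and the superadditivity of \(\mathcal{G}\) gives
\[
\mathcal{G}(U, \mathbb{B}^m) \ge c \sum_n \lambda_n^{m - sp}\, \mathcal{G}(u_n, \mathbb{B}^m), \qquad \mathcal{E}_{s,p}(U, \mathbb{B}^m) \le K \sum_n \lambda_n^{m - sp}\, \mathcal{E}_{s,p}(u_n, \mathbb{B}^m) + K'.
\]
Choosing \(\lambda_n^{m - sp} \sim 2^{-n}/\mathcal{E}_{s,p}(u_n, \mathbb{B}^m)\) makes the Gagliardo sum finite while forcing the \(\mathcal{G}\) sum to diverge, since \(\mathcal{G}(u_n)/\mathcal{E}_{s,p}(u_n) \ge 4^n\). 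When \(sp < m\), the exponent \(m - sp > 0\) permits the \(\lambda_n\) to be taken arbitrarily small to accommodate disjoint balls. When \(sp \ge m\), the exponent is nonpositive, so that the feasibility constraint \(\lambda_n \le 1\) forces \(\mathcal{E}_{s,p}(u_n, \mathbb{B}^m)\) to be small: this is exactly what the smallness hypothesis provides.

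The main obstacle is the opening step. Because \(\mathcal{G}\) is an abstract functional accessible only through its two axioms and cannot be manipulated explicitly, the opening must leave \(u_n\) intact on a fixed interior region so that the scaling axiom propagates the quantitative bound on \(\mathcal{G}(u_n)\) to \(\mathcal{G}(\widetilde{u}_n)\). At the same time, the collar extension must have Gagliardo energy controlled by that of \(u_n\): constructing it requires connectedness of \(\mathcal{N}\) to join the boundary values of \(u_n(2\,\cdot\,)\) to \(c_0\), and in the critical range \(sp \ge m\) the hypothesis that \(\mathcal{N}\) is compact (together with the smallness of \(\mathcal{E}_{s,p}(u_n)\)) enters to keep the transition energy uniformly bounded independently of \(n\). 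This is the technical heart of the proof.
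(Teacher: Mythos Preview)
Your overall architecture---contradiction, opening, gluing on disjoint balls, superadditivity plus scaling for \(\mathcal{G}\)---matches the paper's. But the opening step as you describe it is too strong, and this hides a genuine missing ingredient.

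You claim one can produce \(\widetilde{u}_n\) that simultaneously (i) agrees with a rescaled \(u_n\) on an inner ball, (ii) equals a \emph{fixed} basepoint \(c_0\) on a collar, and (iii) satisfies \(\mathcal{E}_{s,p}(\widetilde{u}_n,\mathbb{B}^m)\le C_0\,\mathcal{E}_{s,p}(u_n,\mathbb{B}^m)\). Properties (ii) and (iii) are incompatible in general. The opening lemma (an averaging argument over translations of a Lipschitz change of variable) only yields a map constant equal to some value \(b_n=u_n^{\mathrm{ext}}(a_n)\) that \emph{depends on \(n\)}; the cost of connecting \(b_n\) to a fixed \(c_0\) through a path in \(\mathcal{N}\) is governed by \(d_{\mathcal{N}}(b_n,c_0)^p\), which is in no way controlled by \(\mathcal{E}_{s,p}(u_n,\mathbb{B}^m)\). (Think of \(u_n\) with tiny energy but range far from \(c_0\).) The paper inserts a separate \emph{clustering} step precisely to absorb this uncontrolled connecting energy: one places \(k(n)\) rescaled disjoint copies of the opened map inside a single ball so that the total Sobolev energy is brought up to a fixed level \(\nu\) independent of \(n\), making the connecting-path energy \(\mathcal{E}_{s,p}(v_n)\) a lower-order term. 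Only after this normalization does one glue at scales \(\rho_n\asymp 2^{-n}\).

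This omission also breaks your choice of radii. With \(\lambda_n^{\,m-sp}\sim 2^{-n}/\mathcal{E}_{s,p}(u_n,\mathbb{B}^m)\) in the subcritical case, nothing prevents \(\mathcal{E}_{s,p}(u_n,\mathbb{B}^m)\) from being arbitrarily small, which would force \(\lambda_n>1\) and the balls \(B(h_n,\lambda_n)\) out of \(\mathbb{B}^m\). The clustering step is again what fixes this: it normalizes the energy to a common level so that the final gluing can use a geometric sequence of radii independent of \(\mathcal{E}_{s,p}(u_n)\). In the critical case \(sp=m\) the paper exploits the null capacity of a point (a sequence \(w_\ell\) with \(\mathcal{E}_{m/p,p}(w_\ell)\to 0\)) to make the connecting energy small, and in the supercritical case compactness of \(\mathcal{N}\) is used to pass to a subsequence with \(b_n\to b_*\) so that \(d_{\mathcal{N}}(b_n,b_*)\to 0\). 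Your sketch gestures at these issues in the last paragraph but does not supply the mechanism; the clustering/normalization step is the missing idea.
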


Compared to the statements of the classical uniform boundedness principle in Banach spaces, the nonlinear uniform boundedness principle of \cref{uniform} replaces the linearity assumption  with some superadditivity and some scaling assumption. When dealing with functions spaces, the scaling in the linear target has been replaced by a scaling in the domain. 

Equivalently, \cref{uniform} is a general tool to construct a counterexample out of the failure of a linear estimate.
When \(sp \le m\), these counterexamples form in fact a \emph{dense set} (\cref{theoremDenseCounterexamples}). Similar density of counterexamples have been obtained recently for the Lavrentiev phenomenon for harmonic maps \cite{MazowieckaStrzelecki2017}.  When \(sp \le m\) and the energy \(\mathcal{G}\) is lower semi-continuous, \cref{uniform} and its consequences \cref{extensionthm,superposition,approximationthm,liftingthm} still hold under the weaker assumption that the set \(\{u \in W^{s, p} (\mathbb{B}^m, \mathcal{N}) \st \mathcal{G} (u, \mathbb{B}^m) < +\infty\}\) has at least one interior point in \(W^{s, p}(\mathbb{B}^m, \mathcal{N})\) (see \cref{theoremDenseCounterexamples} below).

If the energy \(\mathcal{G}\) is \emph{lower semi-continuous} --- which is indeed the case in all the examples considered in the present work --- then either a linear energy bound holds or the set of maps in \(W^{s,p}(\mathbb{B}^m,\mathcal{N})\) of infinite energy is a dense countable intersection of open sets, and thus this set is \emph{comeagre} in the sense of \emph{Baire} whereas the set of maps whose energy \(\mathcal{G}\) is finite is \emph{meagre}.

Following the strategy of Hang and Lin \cite{Hang:2003} (see also \citelist{\cite{Bethuel2014Weak}\cite{Bethuel2014Extension}}), \cref{uniform} will be proved by assuming by contradiction the existence for each \(n \in \N\) of a Sobolev map $u_n\in W^{s,p}(\mathbb{B}^m,\mathcal{N})$ such that $\mathcal{G}\,(u_n,\mathbb{B}^m)\geq 2^n \mathcal{E}_{s,p}(u_n,\mathbb{B}^m)$ and then reaching a contradiction by constructing a map \(u \in W^{s, p} (\mathbb{B}^m, \mathcal{N})\) such that \(\mathcal{G}\,(u, \mathbb{B}^m) = + \infty\) in two main constructions:
\begin{description}
\item[Opening] 
The sequence \((u_n)_{n \in \N}\) is transformed by an opening of maps (in the sense of Brezis and Li \cite{BrezisLi2001}) and some gluing of maps in a sequence \((\Tilde{u}_n)_{n \in \N}\) of maps that all take a fixed value near the boundary (see Steps 1--3 in the proof of \cref{theoremCounterexample}, Section \ref{proofUBP}).
\item[Patching] We patch together rescaled translations of the elements of the sequence \((\Tilde{u}_n)_{n \in \N}\) in such a way that they fit together in the unit ball, the total Sobolev energy remains bounded (by a kind of sub-additivity property: see Lemma \ref{subadditive}) but, by superadditivity, the energy \(\mathcal{G}\) is infinite (see Step 4 in the proof of Theorem \ref{theoremCounterexample}, Section \ref{proofUBP}).
\end{description}

A substantial contribution in the present work is the possibility to handle the fractional case \(0 < s < 1\).

The global strategy of the proof of \cref{uniform} is also somehow reminiscent of the original proofs of Hahn and Banach of the uniform boundedness principle, where worse and worse elements are summed up by the gliding hump technique to obtain a contradiction \citelist{\cite{Banach1922}\cite{Hahn1922}} (see also \cite{Sokal2011}).

When \(0 < s < 1\), the proof only uses the fact that \(\mathcal{N}\) is a \emph{Lipschitz-connected metric space}, that is a metric space of which any pair of points is connected by a Lipschitz-continuous path.

The strategy of proof also covers the case \(s = 0\), corresponding to superposition operators in \(L^p\) spaces (see \cref{case_s_0}) and the case \(s > 1\), for which the resulting theorem involves an estimate by the Sobolev on a larger ball and a lower-order term (see \cref{higher}). 

\subsection{Structure of the article} 
\Cref{toolbox} is devoted to the two main tools we need: opening lemma and weak subadditivity of Sobolev energies. We use them in \cref{proofUBP} to prove our general uniform bound principle and we give several applications in \cref{concreteUBP} including \cref{extensionthm,approximationthm,liftingthm,superposition}. We then investigate the generalization of our method to the limiting case \(s = 0\) (\cref{case_s_0}) and to higher order Sobolev spaces (\cref{higher}).

\section{Toolbox}
\label{toolbox}

\subsection{Opening of Sobolev maps}
The aim of the opening construction, introduced by Brezis and Li \cite{BrezisLi2001}, is to perform a singular composition of a Sobolev map \(u \in W^{s, p} (\mathcal{M}, \mathcal{N})\) with a smooth function: given a smooth function \(\varphi\), we want to control the composite map \(u \compose \varphi\) in Sobolev energy. For a fixed change of variable \(\varphi\) which is not a diffeomorphism, in general \(u \compose \varphi\) has infinite energy. It turns out however that it has finite energy if we take \(\varphi\) out of a suitable family of changes of variable.

Since the image under \(\varphi\) of sets of positive Lebesgue measure can be negligible, the singular composition does not preserve equivalence classes of maps equal almost everywhere. 
In order to avoid this problem, we will not put our maps in equivalence classes and we will consider measurable maps \emph{defined everywhere} in their domain.

\begin{lem}[Opening of maps]
\label{openingLemma}
Let \(m \in \N_*\), \(s \in (0, 1]\), \(p \in [1, + \infty)\), \(\lambda > 1\) and \(\eta \in (0, \lambda)\).
There is a constant \(C > 0\) such that for every \(\rho > 0\), every measurable map \(u : \mathbb{B}^m_{\lambda \rho} \to \mathcal{N}\) and every Lipschitz-continuous map \(\varphi : \mathbb{B}^m_{(1 + \eta) \rho} \to \mathbb{B}^m_{(\lambda - \eta) \rho}\), there exists a point \(a \in \mathbb{B}^m_{\eta \rho}\) such that 
\[
 \mathcal{E}_{s, p} \bigl(u \compose (\varphi (\cdot - a) +a ), \mathbb{B}^m_\rho\bigr)
 \le C\, \mathrm{Lip}(\varphi)^{sp}\mathcal{E}_{s, p} (u, \mathbb{B}^m_{\lambda \rho}),
\]
where for every \(r\ge 0\), \(\mathbb{B}^m_r:=\{x\in\R^m\st |x|\le r\}\).
\end{lem}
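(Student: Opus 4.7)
The plan is to prove the inequality on average: I will show that for a constant $C'$ depending only on $m, s, p, \lambda, \eta$,
\[
\int_{\mathbb{B}^m_{\eta\rho}}\mathcal{E}_{s,p}\bigl(u\compose\Psi_a,\mathbb{B}^m_\rho\bigr)\,da\;\le\;C'\,\mathrm{Lip}(\varphi)^{sp}\,\lvert\mathbb{B}^m_{\eta\rho}\rvert\,\mathcal{E}_{s,p}\bigl(u,\mathbb{B}^m_{\lambda\rho}\bigr),
\]
where $\Psi_a(x):=\varphi(x-a)+a$; the existence of an $a\in\mathbb{B}^m_{\eta\rho}$ satisfying the claimed pointwise bound then follows by the mean value principle.

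I first expand the Gagliardo energy, use Fubini to push $a$ inside, and perform the Jacobian-one substitution $b:=x-a$, $b':=y-a$ for each fixed $a$, dominating the region in $(b,b')$ by $\mathbb{B}^m_{(1+\eta)\rho}^2$. Swapping the order so that $a$ becomes innermost, I then apply a second measure-preserving substitution $a\mapsto c:=\varphi(b)+a$ for fixed $(b,b')$; setting $h:=\varphi(b')-\varphi(b)$, one has $\varphi(b')+a=c+h$, and the inclusions $\varphi(b)+\mathbb{B}^m_{\eta\rho}\subset\mathbb{B}^m_{\lambda\rho}$ and $c+h\in\mathbb{B}^m_{\lambda\rho}$ let me bound the inner integral by
\[
G(h)\;:=\;\int_{\{c\,:\,c,\,c+h\in\mathbb{B}^m_{\lambda\rho}\}}d_\mathcal{N}\bigl(u(c),u(c+h)\bigr)^p\,dc.
\]
This produces the dominating expression $\displaystyle\int_{\mathbb{B}^m_{(1+\eta)\rho}^2} G\bigl(\varphi(b')-\varphi(b)\bigr)\big/|b-b'|^{m+sp}\,db\,db'$.

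Next, I invoke the Lipschitz estimate $|\varphi(b')-\varphi(b)|\le\mathrm{Lip}(\varphi)|b-b'|$ in the form $|b-b'|^{-sp}\le\mathrm{Lip}(\varphi)^{sp}|h|^{-sp}$, which extracts the required factor $\mathrm{Lip}(\varphi)^{sp}$ and leaves an integral $\int G(h)\big/(|b-b'|^m|h|^{sp})\,db\,db'$. Using Tonelli and the identity $\mathcal{E}_{s,p}(u,\mathbb{B}^m_{\lambda\rho})=\int_{\R^m}G(h)|h|^{-(m+sp)}\,dh$, the task reduces to showing that this integral is bounded by a constant (depending only on $m, s, p, \lambda, \eta$) times $\mathcal{E}_{s,p}(u,\mathbb{B}^m_{\lambda\rho})$, which I intend to achieve via the complementary Lipschitz bound $|b-b'|\ge|h|/\mathrm{Lip}(\varphi)$ applied to the remaining factor $|b-b'|^{-m}$, combined with a coarea-type argument on the map $b'\mapsto h$.

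The hard part is this final step, because the Lipschitz map $\varphi$ need not be injective and its Jacobian may vanish on sets of positive measure, so the change of variable $b'\mapsto h=\varphi(b')-\varphi(b)$ does not reduce directly to an integral in $h$ alone. The key observation rescuing the argument is that precisely on regions where $\varphi$ is locally degenerate the increment $h$ is locally constant and, in the fully degenerate case, equal to $0$; since $G(0)=0$ by the definition of $d_\mathcal{N}$, these degenerate portions contribute nothing, and a careful application of the area formula on the non-degenerate part produces the required linear estimate.
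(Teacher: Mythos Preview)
Your averaging over $a$, the substitution $(x,y)\mapsto(b,b')$, and the translation $a\mapsto c=\varphi(b)+a$ are all correct and match the paper's opening moves. The gap is in the last paragraph.

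The assertion that on the degenerate set $h$ is ``in the fully degenerate case, equal to $0$'' is false: $h=\varphi(b')-\varphi(b)$ vanishes only when $\varphi(b')=\varphi(b)$, which is unrelated to the rank of $D\varphi(b')$. If $\varphi$ is constant on a large set $R$ with value $v\ne\varphi(b)$, then for $b'\in R$ the integrand equals $G(v-\varphi(b))\,|v-\varphi(b)|^{-sp}\,|b-b'|^{-m}$, isolating a single value of $h$ with weight comparable to $|R|$; there is no reason this should be dominated by the \emph{integral} $\int G(h)\,|h|^{-m-sp}\,dh$ uniformly in $\varphi$. On the nondegenerate set, the area formula turns $\int F(\varphi(b'))\,db'$ into $\int F(w)\bigl(\sum_{\varphi(b')=w}|\det D\varphi(b')|^{-1}\bigr)\,dw$, and the multiplicity factor $\sum|\det D\varphi(b')|^{-1}$ admits no bound in terms of $\mathrm{Lip}(\varphi)$ alone. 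Separately, replacing $|b-b'|^{-m}$ by $\mathrm{Lip}(\varphi)^m\,|h|^{-m}$ introduces an extra factor $\mathrm{Lip}(\varphi)^m$, contradicting the stated dependence $C\,\mathrm{Lip}(\varphi)^{sp}$ with $C$ independent of $\varphi$.

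The paper sidesteps any substitution through $\varphi$ by inserting a third point \emph{before} integrating: one averages the triangle inequality
$d_\mathcal{N}\bigl(u(\varphi_a(x)),u(\varphi_a(y))\bigr)^p\le 2^{p-1}\bigl(d_\mathcal{N}(u(\varphi_a(x)),u(z))^p+d_\mathcal{N}(u(z),u(\varphi_a(y)))^p\bigr)$
over $z$ in a ball of radius proportional to $|\varphi_a(x)-\varphi_a(y)|$ centred at their midpoint. After Fubini, the $y$-integral becomes
$\int_{\{y:\;|\varphi_a(x)-z|\lesssim\mathrm{Lip}(\varphi)\,|x-y|\}}|x-y|^{-m-sp}\,dy\lesssim\mathrm{Lip}(\varphi)^{sp}\,|\varphi_a(x)-z|^{-sp}$,
and the only remaining change of variable is the translation $a\mapsto a+\varphi(x-a)$, with unit Jacobian. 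The third point decouples the two occurrences of $\varphi_a$, so that $\varphi$ enters solely through its Lipschitz constant and never through a Jacobian; this is precisely the idea your outline is missing.
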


In the statement the dependence of the point \(a\) on the map \(u\) is essential; modifying \(u\) merely on a Lebesgue null set could change the choice of this point \(a\).

The assumptions on the map \(\varphi\) ensure that if \(a \in \mathbb{B}^m_{\eta \rho}\) and \(x \in \mathbb{B}^m_{\rho}\), then \(\varphi (x - a) + a \in \mathbb{B}^m_{\lambda \rho}\) and thus the left-hand side of the inequality is well defined.

\begin{proof}[Proof of \cref{openingLemma}]
\resetconstant
We define for each point \(a \in \mathbb{B}^m_{\eta \rho}\) the map \(\varphi_a = (\varphi (\cdot - a) + a) : \mathbb{B}^m_{\rho} \to \mathbb{B}^m_{\lambda \rho}\).
We will prove an averaged estimate
\begin{equation}
  \label{meanenergy}
  \fint_{\mathbb{B}^m_{\eta \rho /2 }} \mathcal{E}_{s,p}(u \compose \varphi_a, \mathbb{B}^m_\rho) \diff a \leq C\, \mathrm{Lip}(\varphi)^{sp} \mathcal{E}_{s,p}(u,\mathbb{B}^m_{\lambda \rho}).
\end{equation}
\emph{In the case \(s = 1\)}, we follow \cite{BousquetPonceVanSchaftingen2015}*{Lemma 2.3}: by the chain rule for Sobolev functions, we have $\abs{D (u \compose \varphi_a)} \leq \mathrm{Lip}(\varphi)\, \abs{Du}\compose \varphi_a$ in \(\mathbb{B}^m_\rho\)  and so by definition of the map \(\varphi_a\),
\[
\begin{split}
  \int_{\mathbb{B}^m_{\eta \rho}} \mathcal{E}_{1,p}(u \compose \varphi_a,\mathbb{B}^m_\rho)\diff a
  &=\int_{\mathbb{B}^m_{\eta \rho}} \int_{\mathbb{B}^m_\rho}|D (u \compose \varphi_a) (x)|^p\diff x\diff a\\
  &\leq\mathrm{Lip}(\varphi)^p \int_{\mathbb{B}^m_{\eta \rho}} \biggl(\int_{\mathbb{B}^m_\rho}|Du (a+\varphi(x-a))|^p\diff x\biggr)\diff a
\end{split}
\]
By a change of variable \(y = x - a\) and by interchanging the order of integration, we deduce that 
\[
\begin{split}
\int_{\mathbb{B}^m_{\eta \rho}} \mathcal{E}_{1,p}(u \compose \varphi_a,\mathbb{B}^m_\rho)\diff a
  &\leq\mathrm{Lip}(\varphi)^p \int_{\mathbb{B}^m_{\eta \rho}} \biggl(\int_{\mathbb{B}^m_{(1 + \eta)\rho}}|Du (a+\varphi(y))|^p\diff y\biggr)\diff a\\
  &= \mathrm{Lip}(\varphi)^p \int_{\mathbb{B}^m_{(1 + \eta) \rho}} \mathcal{E}_{1, p} \bigl(u, \mathbb{B}^m_{\eta \rho} (\varphi (y))\bigr) \diff y.
\end{split}
\]
We finally have, by monotonicity of the Sobolev energy,
\[
\begin{split}
  \int_{\mathbb{B}^m_{\eta \rho}} \mathcal{E}_{1,p}(u \compose \varphi_a,\mathbb{B}^m_\rho)\diff a&\le \mathrm{Lip}(\varphi)^p \int_{\mathbb{B}^m_{(1 + \eta) \rho}} \mathcal{E}_{1, p} \bigl(u, \mathbb{B}^m_{\lambda  \rho}\bigr) \diff y\\
  & = \mathcal{L}^m \bigl(\mathbb{B}^m_{(1 + \eta) \rho}\bigr) \,\mathrm{Lip} (\varphi)^p\; \mathcal{E}_{1,p} \bigl(u,\mathbb{B}^m_{\lambda \rho}\bigr).
\end{split}
\]
The conclusion follows with \(C = 2^m (1 + \frac{1}{\eta})^m\).

\emph{When $0<s<1$}, we define for \(x, y \in \mathbb{B}^m_{\rho}\) and \(a \in \mathbb{B}^m_{\eta \rho/2}\) the set
\[
  \mathbb{B}^m_{a,x,y}:= B_{\frac{|\varphi_a(x)-\varphi_a(y)|}{\beta}}\Bigl(\frac{\varphi_a(x)+\varphi_a(y)}{2}\Bigr)\subset \R^m,\quad
  \text{ with }\beta:=\frac{4 \lambda}{\eta} - 2.
\]
For such points \(x,y,a\), we observe that \(|\varphi_a(x)\pm\varphi_a(y)|\leq \rho(2\lambda-\eta)\). In particular,
\[
 \frac{\abs{\varphi_a(x)+\varphi_a(y)}}{2} + \frac{\abs{\varphi_a(x)-\varphi_a(y)}}{\beta}
 \le 
 \rho (2\lambda-\eta) \Bigl(\frac 12+\frac 1\beta\Bigr) 
 = \lambda \rho,
\]
and therefore \(\mathbb{B}^m_{a, x, y} \subseteq \mathbb{B}^m_{\lambda \rho}\).
By the triangle inequality and by convexity of the function \(t \in \R \mapsto \abs{t}^p\), we have for every \(x, y \in \mathbb{B}^m_{\rho}\), \(a \in \mathbb{B}^m_{\eta\rho/2}\) and \(z \in \mathbb{B}^m_{a,x,y}\) ,
\[
d_\mathcal{N}\big(u \compose \varphi_a(x), u \compose \varphi_a(y)\big)^p\leq 2^{p - 1} \bigl(d_\mathcal{N}\big(u \compose \varphi_a(x),u(z)\big)^p+ d_\mathcal{N}\big(u(z),u \compose \varphi_a(y)\big)^p\bigr). 
\]
By averaging over \(z \in \mathbb{B}^m_{a, x, y}\), we obtain
\[
\begin{split}
 \mathcal{E}_{s,p}(u \compose \varphi_a,\mathbb{B}^m_\rho) 
 & = \int_{\mathbb{B}^m_{\rho}} \int_{\mathbb{B}^m_\rho} \frac{d_\mathcal{N}\big(u (\varphi_a (x)), u (\varphi_a (y))\big)^p}{\abs{x - y}^{m + sp}} \diff x \diff y\\
 &\leq 2^{p}  \int_{\mathbb{B}^m_{\rho}} \int_{\mathbb{B}^m_\rho} \fint_{\mathbb{B}^m_{a,x,y}}\frac{d_{\mathcal{N}}\big(u(\varphi_a(x)),u(z)\big)^p}{\abs{x - y}^{m+sp}}\diff z\diff x\diff y\\
 & = \C \int_{\mathbb{B}^m_{\rho}} \int_{\mathbb{B}^m_\rho} \int_{\mathbb{B}^m_{a,x,y}}\frac{d_{\mathcal{N}}\big(u(\varphi_a(x)),u(z)\big)^p}{\abs{\varphi_a (x) - \varphi_a (y)}^m \abs{x - y}^{m+sp}}\diff z\diff x\diff y.
\end{split}
\]
We next observe that if \(z \in \mathbb{B}^m_{a, x, y}\)
\[
\begin{split}
 \abs{\varphi_a (x) - z} &\le 
  \Bigabs{\frac{\varphi_a (x) - \varphi_a (y)}{2} } 
  +\Bigabs{\frac{\varphi_a (x) + \varphi_a (y)}{2} - z} \\
 &\le \Big(\frac 12+\frac 1\beta\Big) \abs{\varphi_a (x) - \varphi_a (y)}
  = \frac{\lambda}{2\lambda-\eta} \abs{\varphi_a (x) - \varphi_a (y)},
\end{split}
\]
and therefore
\[
\begin{split}
 \mathcal{E}_{s,p}(u \compose \varphi_a,\mathbb{B}^m_\rho)
 & \le\Cl{ixye} \int_{\mathbb{B}^m_{\rho}} \int_{\mathbb{B}^m_\rho} \int_{\mathbb{B}^m_{a,x,y}}\frac{d_{\mathcal{N}}\big(u(\varphi_a(x)),u(z)\big)^p}{\abs{\varphi_a (x) - z}^m \abs{x - y}^{m+sp}}\diff z\diff x\diff y.
\end{split}
\]
By Fubini's theorem, this can be rewritten as
\begin{equation}
\label{averagedYestimate}
\begin{split}
 \mathcal{E}_{s,p}(u \compose \varphi_a,\mathbb{B}^m_\rho)
 & \le \Cr{ixye} \int_{\mathbb{B}^m_{\lambda\rho}} \int_{\mathbb{B}^m_\rho} \int_{Y_{a,x, z}}\frac{d_{\mathcal{N}}\big(u(\varphi_a(x)),u(z)\big)^p}{\abs{\varphi_a (x) - z}^m \abs{x - y}^{m+sp}}\diff y\diff x\diff z,
\end{split}
\end{equation}
where $Y_{a,x,y} \subset \R^m$ is the set of points $y$ for which $z\in \mathbb{B}^m_{a,x,y}$:
\[
 Y_{a, x, z} = \bigl\{ y \in \mathbb{B}^m_{\rho} \st \beta \abs{\varphi_a (x) + \varphi_a (y) - 2 z} \le 2\abs{\varphi_a (x) - \varphi_a (y)}\bigr\}.
\]
Since \(\abs{\varphi_a (x) + \varphi_a (y) - 2 z} \ge 2 \abs{\varphi_a (x) - z} -\abs{\varphi_a (x) - \varphi_a (y)}\), we have 
\[
\begin{split}
 Y_{a, x, z} &\subseteq\bigl\{ y \in \mathbb{B}^m_{\rho} \st \abs{\varphi_a (x) - z} \le \Cl{ixlpn} \abs{\varphi_a (x) - \varphi_a (y)}\bigr\}\\
 & \subseteq  \bigl\{ y \in \R^m \st \abs{\varphi_a (x) - z} \le \Cr{ixlpn} \, \mathrm{Lip} (\varphi) \abs{x - y}\bigr\}.
\end{split}
\]
We compute
\begin{equation}
\label{estimateYintegral}
\int_{Y_{a, x, z}}\frac{\diff y}{\abs{x - y}^{m+sp}}\le 
\int_{\R^m \setminus B_{\frac{\abs{\varphi_a (x) - z}}{C_3 \mathrm{Lip} (\varphi)}}(x)}\frac{\diff y}{\abs{x - y}^{m+sp}}= 
\C \frac{\mathrm{Lip} (\varphi)^{sp}}{\abs{\varphi_a (x) - z}^{sp}}.
\end{equation}
By combining \eqref{averagedYestimate} and \eqref{estimateYintegral}, integrating over \(a\in \mathbb{B}^m_{\eta\rho/2}\) and by the changes of variable \(y = x - a\in \mathbb{B}^m_{(1+\frac \eta 2)\rho}\) and \(w=a+\varphi(y)\in \mathbb{B}^m_{(\lambda-\frac\eta 2)\rho}\), we are led to the estimates
\begin{equation*}
\begin{split}
\int_{\mathbb{B}^m_{\eta \rho/2}} \mathcal{E}_{s,p}(u \compose \varphi_a,\mathbb{B}^m_\rho) \diff a
&\leq \Cl{irnxx} \mathrm{Lip}(\varphi)^{sp} \int_{\mathbb{B}^m_{\eta \rho/2}}\int_{\mathbb{B}^m_{\lambda \rho}}\int_{\mathbb{B}^m_{\rho}}\frac{d_{\mathcal{N}}\big(u(\varphi_a(x)),u(z)\big)^p}{|\varphi_a(x)-z|^{m+sp}}\diff x\diff z \diff a\\
&\leq \Cr{irnxx} \mathrm{Lip}(\varphi)^{sp} \int_{\mathbb{B}^m_{\eta \rho/2}} \int_{\mathbb{B}^m_{\lambda  \rho}}\int_{\mathbb{B}^m_{(1 + \frac{\eta}{2})\rho}}\frac{d_\mathcal{N}\big(u(a+\varphi(y)), u(z)\big)^p}{|a+\varphi(y)-z|^{m+sp}}\diff y\diff z \diff a\\
&\leq \Cr{irnxx} \mathrm{Lip}(\varphi)^{sp} \int_{\mathbb{B}^m_{(1 + \frac{\eta}{2}) \rho}} \int_{\mathbb{B}^m_{(\lambda - \frac{\eta}{2})\rho}} \int_{\mathbb{B}^m_{\lambda \rho}} \frac{d_\mathcal{N}\big(u(w), u(z)\big)^p}{|w-z|^{m+sp}}\diff z \diff w \diff y\\
& \leq \Cr{irnxx}\mathrm{Lip}(\varphi)^{sp} \mathcal{L}^m (\mathbb{B}^m_{(1 + \frac{\eta}{2}) \rho})\, \mathcal{E}_{s, p} (u, \mathbb{B}^m_{\lambda \rho}).
\end{split}
\end{equation*}
The conclusion follows with \(C=\Cr{irnxx}(1+\frac 2\eta)^m\).
\end{proof}

\subsection{Gluing interior and exterior estimates}
The next lemma will allow us to combine constructions performed on different parts of the domain. Whereas when \(s = 1\) it is sufficient to have traces matching on the boundary, the nonlocality of the fractional case \(s \in (0, 1)\) invites us to consider a gluing with a buffer zone \(\mathbb{B}^m_{\rho} \setminus \Bar{\mathbb{B}}^m_{\eta \rho}\) in the energies. 

\begin{lem}[Gluing along a buffer zone]
\label{lemmaBallToRN}
Let \(m \in \N_*\), \(s \in (0, 1]\), \(p \in [1, \infty)\). There exists a constant \(C > 0\) such that for every \(\eta \in (0, 1)\), every open set \(A \subset \R^m\), every measurable function \(u : A \to \mathcal{N}\) and every \(\rho > 0\) such that \(\mathbb{B}^m_{\rho} \setminus \Bar{\mathbb{B}}^m_{\eta \rho} \subseteq A\),
\[
  \mathcal{E}_{s,p}(u, A)\leq \Bigl(1 + \frac{C}{(1 - \eta)^{sp + 1}} \Bigr)\mathcal{E}_{s,p}(u,A \cap \mathbb{B}^m_{\rho})
  + \Bigl(1 + \frac{C\eta^m}{1 - \eta}\Bigr) \mathcal{E}_{s,p}(u,A \setminus \Bar{\mathbb{B}}^m_{\eta \rho}). 
\]
\end{lem}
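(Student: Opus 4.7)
The plan is to split the double integral $\mathcal{E}_{s,p}(u,A)$ according to whether each of the two integration variables lies in $E_1 = A \cap \mathbb{B}^m_{\eta\rho}$, the annulus $E_2 = A \cap (\mathbb{B}^m_\rho \setminus \bar{\mathbb{B}}^m_{\eta\rho})$, or $E_3 = A \setminus \bar{\mathbb{B}}^m_\rho$, and to isolate the piece not already accounted for by $\mathcal{E}_{s,p}$ on $A \cap \mathbb{B}^m_\rho = E_1 \cup E_2$ and on $A \setminus \bar{\mathbb{B}}^m_{\eta\rho} = E_2 \cup E_3$. The case $s=1$ is immediate since $\mathcal{E}_{1,p}$ is a single integral and $A\setminus \mathbb{B}^m_\rho \subseteq A \setminus \bar{\mathbb{B}}^m_{\eta\rho}$, giving the conclusion with $C=0$, so I would focus on $s \in (0,1)$. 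A direct expansion of the Gagliardo double integral yields
\[
\mathcal{E}_{s,p}(u,A) = \mathcal{E}_{s,p}(u,E_1\cup E_2) + \mathcal{E}_{s,p}(u,E_2\cup E_3) - \mathcal{E}_{s,p}(u,E_2) + 2I,
\]
where
\[
I = \int_{E_1}\int_{E_3}\frac{d_\mathcal{N}(u(x),u(y))^p}{|x-y|^{m+sp}}\,\mathrm{d}x\,\mathrm{d}y,
\]
and since $\mathcal{E}_{s,p}(u,E_2)\ge 0$, the problem reduces to showing that $I$ is dominated by $(1-\eta)^{-(sp+1)}\mathcal{E}_{s,p}(u,A\cap\mathbb{B}^m_\rho) + \eta^m(1-\eta)^{-1}\mathcal{E}_{s,p}(u,A\setminus\bar{\mathbb{B}}^m_{\eta\rho})$ up to a multiplicative constant.

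To control $I$, the central trick is to insert an intermediate point $z\in E_2$ via the triangle inequality $d_\mathcal{N}(u(x),u(y))^p \le 2^{p-1}\bigl(d_\mathcal{N}(u(x),u(z))^p + d_\mathcal{N}(u(z),u(y))^p\bigr)$ and then to average over $z$ in a suitable subset $Z_y\subseteq E_2$, yielding $I \le 2^{p-1}(I_1 + I_2)$. For the first term $I_1$, which carries $d_\mathcal{N}(u(x),u(z))$, it suffices to take $Z_y = E_2$ independent of $y$: integrating $y$ first via the standard pointwise bound $\int_{E_3}|x-y|^{-(m+sp)}\,\mathrm{d}y \le C((1-\eta)\rho)^{-sp}$ (valid because $\mathrm{dist}(x,E_3)\ge (1-\eta)\rho$ whenever $x\in E_1$), then using $|x-z|\le 2\rho$ to upgrade $d_\mathcal{N}(u(x),u(z))^p \le (2\rho)^{m+sp}d_\mathcal{N}(u(x),u(z))^p/|x-z|^{m+sp}$, one reaches an integral over $E_1\times E_2\subseteq (A\cap\mathbb{B}^m_\rho)^2$ that is bounded by $\mathcal{E}_{s,p}(u,A\cap\mathbb{B}^m_\rho)$; combining with $|E_2|\gtrsim \rho^m(1-\eta)$ gives $I_1 \lesssim (1-\eta)^{-(sp+1)}\,\mathcal{E}_{s,p}(u,A\cap\mathbb{B}^m_\rho)$.

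The delicate step is the bound $I_2 \lesssim \eta^m(1-\eta)^{-1}\,\mathcal{E}_{s,p}(u,A\setminus\bar{\mathbb{B}}^m_{\eta\rho})$ with the sharp coefficient $\eta^m/(1-\eta)\asymp |E_1|/|E_2|$; this is the main obstacle, and where my earlier averaging choice has to be refined. I would integrate $x$ first using the shell bound $\int_{E_1}|x-y|^{-(m+sp)}\,\mathrm{d}x \le C|E_1|/(|y|-\eta\rho)^{m+sp}$, and split $E_3$ at $|y|=2\rho$. For $|y|>2\rho$ one has $|z-y|\sim|y|\sim|x-y|$ uniformly for $z\in E_2$, so averaging $Z_y = E_2$ yields directly the ratio $|E_1|/|E_2|\le C\eta^m/(1-\eta)$. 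For $|y|\le 2\rho$ the averaging region $Z_y$ must be adapted to $y$: a conical subset of $E_2$ around the direction $y/|y|$ whose angular opening is tuned to the scale $(|y|-\eta\rho)/\rho$, so that $|z-y|\asymp |y|-\eta\rho$ for every $z\in Z_y$ and so that the family $\{Z_y\}$ has controlled overlap on $E_2$. The point where care is needed is in verifying that the product of $|E_1|$, of $(|y|-\eta\rho)^{-(m+sp)}$, and of the anisotropic volume $|Z_y|$ collapses to the single factor $\eta^m/(1-\eta)$, since a cruder isotropic averaging of radius $(1-\eta)\rho$ would instead produce the weaker factor $\eta^m/(1-\eta)^m$. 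Combining $I_1$ with both parts of $I_2$ then gives the claimed inequality.
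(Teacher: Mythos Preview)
Your decomposition and the treatment of $I_1$ match the paper exactly. The divergence is in $I_2$: the paper does not split at $|y|=2\rho$ and does not use adapted regions $Z_y$. It simply averages over the \emph{full} annulus $E_2$, integrates out $x\in E_1$ via $|x-y|\ge|y|-\eta\rho$ to extract the factor $|E_1|\asymp\eta^m\rho^m$, and then replaces $(|y|-\eta\rho)^{-(m+sp)}$ by $C\,|y-z|^{-(m+sp)}$ using a claimed pointwise bound $|y-z|\le 2(|y|-\eta\rho)$ for all $z\in E_2$. Dividing by $|E_2|\asymp(1-\eta^m)\rho^m$ gives the factor $\eta^m/(1-\eta^m)\le\eta^m/(1-\eta)$ in one line.

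Your instinct that something is delicate is not unfounded: the paper's pointwise inequality $|y-z|\le 2(|y|-\eta\rho)$ is actually false for $z\in E_2$ nearly antipodal to $y$ (the correct uniform bound is $|y-z|\le\frac{2}{1-\eta}(|y|-\eta\rho)$), so the paper's argument, done carefully, yields $\eta^m/(1-\eta)^{m+sp+1}$ rather than $\eta^m/(1-\eta)$. This discrepancy is harmless for every use of the lemma in the paper, where $\eta$ is bounded away from $1$.

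Your conical scheme, however, does not recover the sharper power either. A cone about $y/|y|$ of angular half-opening $\theta\sim(|y|-\eta\rho)/\rho$ intersected with $E_2$ has volume $\sim\theta^{m-1}(1-\eta^m)\rho^m$; for $|y|$ close to $\rho$ this is $\sim(1-\eta)^m\rho^m$, so $|E_1|/|Z_y|\sim\eta^m/(1-\eta)^m$---the very bound you call ``cruder''. To obtain $|E_1|/|Z_y|\lesssim\eta^m/(1-\eta)$ you would need $|Z_y|\gtrsim(1-\eta)\rho^m$, hence angular extent of order one, but then $|z-y|$ can be of order $\rho\sim(|y|-\eta\rho)/(1-\eta)$ and the comparison $|z-y|\asymp|y|-\eta\rho$ fails in exactly the same way. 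For the purposes of the paper, just average over all of $E_2$ as the authors do and accept whatever power of $1-\eta$ comes out; the precise exponent in the second coefficient is immaterial downstream.
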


The constant \(C\) in the statement of \cref{lemmaBallToRN} only depends on the dimension \(m\), on the regularity \(s \in (0, 1]\) and on the integrability \(p \in [1, +\infty)\). It does not depend on the set \(A\) nor on the map \(u\) nor on the radius \(\rho\) nor on \(\eta\). 

We will apply \cref{lemmaBallToRN} in the case where \(A\) is the entire Euclidean space \(\R^m\) and a ball \(\mathbb{B}^m_R \subset \R^m\) with \(\rho < R\).

\begin{proof}[Proof of \cref{lemmaBallToRN}]
\resetconstant 
When \(s = 1\), we have \(\mathcal{E}_{s,p}(u,A) \le \mathcal{E}_{s,p}(u,A \cap \mathbb{B}^m_{\rho}) + \mathcal{E}_{s, p} (u,A \setminus \Bar{\mathbb{B}}^m_{\eta \rho})\) and the conclusion follows with \(C = 1\).

For \(0 < s < 1\), we have by additivity of the double integral defining the fractional Sobolev energy \(\mathcal{E}_{s, p}\),
\begin{equation}
\label{ineqEnergyBalanceInnerOuter}
 \mathcal{E}_{s,p}(u,A) 
\le  \mathcal{E}_{s, p} (u, A \cap \mathbb{B}^m_{\rho})+\mathcal{E}_{s, p} (u, A \setminus \Bar{\mathbb{B}}^m_{\eta \rho})
 + 2\int_{A \setminus \mathbb{B}^m_{\rho}} 
 \int_{A \cap \Bar{\mathbb{B}}^m_{\eta \rho}} \frac{d_\mathcal{N}\bigl(u (x), u (y)\bigr)^p}{\abs{ x - y}^{m + sp}} \diff x \diff y;
\end{equation}
it will thus be sufficient to estimate the last integral on the right-hand side.
We notice that for each \(x \in A \cap \Bar{\mathbb{B}}^m_{\eta \rho}\), \(y \in A \setminus \mathbb{B}^m_{\rho}\) and \(z \in \mathbb{B}^m_{\rho} \setminus \Bar{\mathbb{B}}^m_{\eta \rho}\subset A\), we have, by convexity of the function \(t \in \R \mapsto \abs{t}^p\),
\begin{equation}
\label{ineqyxz}
  d_\mathcal{N}(u(x), u(y))^p \le 2^{p-1} \bigl( d_{\mathcal{N}} \bigl(u(x),u(z)\bigr)^p+d_\mathcal{N} \bigl(u(z),u(y)\bigr)^p\bigr).
\end{equation}
By averaging the inequality  \eqref{ineqyxz} over \(z \in \mathbb{B}^m_{\rho} \setminus \Bar{\mathbb{B}}^m_{\eta \rho}\) we estimate the integral in the right-hand side of \eqref{ineqEnergyBalanceInnerOuter} as 
\begin{multline}
\label{ineqyxzaveraged}
 \int_{A \setminus \mathbb{B}^m_{\rho}} 
 \int_{A \cap \Bar{\mathbb{B}}^m_{\eta \rho}} \frac{d_\mathcal{N}\bigl(u (x), u (y)\bigr)^p}{\abs{ x - y}^{m + sp}} \diff x \diff y\\
 \le  2^{p-1}\biggl(\fint_{\mathbb{B}^m_\rho \setminus \Bar{\mathbb{B}}^m_{\eta \rho}} \int_{A \setminus \mathbb{B}^m_{\rho}} 
 \int_{A \cap \Bar{\mathbb{B}}^m_{\eta \rho}} \frac{d_\mathcal{N} \bigl(u (x), u (z)\bigr)^p}{\abs{ x - y}^{m + sp}} \diff x \diff y \diff z \\
 + \fint_{\mathbb{B}^m_\rho \setminus \Bar{\mathbb{B}}^m_{\eta \rho}} \int_{A \setminus \mathbb{B}^m_{\rho}} 
 \int_{A \cap \Bar{\mathbb{B}}^m_{\eta \rho}} \frac{d_\mathcal{N} \bigl(u (z), u (y)\bigr)^p}{\abs{ x - y}^{m + sp}} \diff x \diff y \diff z \biggr).
\end{multline}
For the first integral in the right-hand side of \eqref{ineqyxzaveraged}, since for each \(x \in A \cap \Bar{\mathbb{B}}^m_{\eta \rho}\) and \(y\in A\setminus\mathbb{B}^m_{\rho}\), one has \(|x-y|\ge (1-\eta)\rho\), we first have by integration over \(y\)
\begin{multline*}
 \int_{\mathbb{B}^m_\rho \setminus \Bar{\mathbb{B}}^m_{\eta \rho}} \int_{A \setminus \mathbb{B}^m_{\rho}} 
 \int_{A \cap \Bar{\mathbb{B}}^m_{\eta \rho}} \frac{d_\mathcal{N} \bigl(u (x), u (z)\bigr)^p}{\abs{ x - y}^{m + sp}} \diff x \diff y \diff z\\
 \le \frac{\C}{(1 - \eta)^{s p} \rho^{s p}}  \int_{\mathbb{B}^m_\rho \setminus \Bar{\mathbb{B}}^m_{\eta \rho}} 
 \int_{A \cap  \Bar{\mathbb{B}}^m_{\eta \rho}} d_\mathcal{N} \bigl(u (x), u (z)\bigr)^p \diff x \diff z.
\end{multline*}
Moreover, by dividing by the measure of the set \(\mathbb{B}^m_\rho \setminus \Bar{\mathbb{B}}^m_{\eta \rho}\) and noting that for each \(x \in A \cap\Bar{\mathbb{B}}^m_{\eta \rho}\) and \(z \in \mathbb{B}^m_{\rho} \setminus \Bar{\mathbb{B}}^m_{\eta \rho}\), one has \(\abs{x - z} \le 2 \rho\), we conclude that  
\begin{equation}
\label{ineqInnerOuterFirstIntegral}
\begin{split}
 \fint_{\mathbb{B}^m_\rho \setminus \Bar{\mathbb{B}}^m_{\eta \rho}} \int_{A \setminus \mathbb{B}^m_{\rho}} 
 \int_{A \cap \Bar{\mathbb{B}}^m_{\eta \rho}} &\frac{d_\mathcal{N} \bigl(u (x), u (z)\bigr)^p}{\abs{ x - y}^{m + sp}} \diff x \diff y \diff z\\
 &\le \frac{\Cl{instr}}{(1 - \eta)^{sp}(1-\eta^m)} \int_{\mathbb{B}^m_\rho \setminus \Bar{\mathbb{B}}^m_{\eta \rho}} 
 \int_{A \cap \Bar{\mathbb{B}}^m_{\eta \rho}} \frac{d_\mathcal{N} \bigl(u (x), u (z)\bigr)^p}{\abs{ x - z}^{m + sp}} \diff x \diff z\\
 &\le \frac{\Cr{instr}}{(1 - \eta)^{sp+1} } \,\mathcal{E}_{s, p} (u, A \cap \mathbb{B}^m_\rho).
\end{split}
\end{equation}

We consider now the second integral in the right-hand side of \eqref{ineqyxzaveraged}. We note that if \(x \in A \cap \Bar{\mathbb{B}}^m_{\eta \rho}\) and \(y \in A \setminus \mathbb{B}^m_\rho\), then \(\abs{x - y} \ge \abs{y} - \eta \rho \ge 0\), and thus 
\begin{equation*}
\int_{\mathbb{B}^m_\rho \setminus \Bar{\mathbb{B}}^m_{\eta \rho}} \int_{A \setminus \mathbb{B}^m_{\rho}} 
 \int_{A \cap \Bar{\mathbb{B}}^m_{\eta \rho}} \frac{d_\mathcal{N} \bigl(u (z), u (y)\bigr)^p}{\abs{ x - y}^{m + sp}} \diff x \diff y \diff z
 \le \C \eta^m \rho^m \int_{\mathbb{B}^m_\rho \setminus \Bar{\mathbb{B}}^m_{\eta \rho}} \int_{A \setminus \mathbb{B}^m_{\rho}}\frac{
 d_\mathcal{N} \bigl(u (z), u (y)\bigr)^p}{(\abs{y} - \eta \rho)^{m + sp}}\diff y \diff z.
\end{equation*}
Next, if \(y \in A \setminus \mathbb{B}^m_{\rho}\) and \(z \in \mathbb{B}^m_\rho \setminus \Bar{\mathbb{B}}^m_{\eta \rho}\), we have
\[
\abs{y - z} \le \mathrm{dist}(y,\mathbb{B}^m_{\eta\rho})+\mathrm{dist}(z,\mathbb{B}^m_{\eta\rho})\le 2\,\mathrm{dist}(y,\mathbb{B}^m_{\eta\rho})=2(|y|-\eta\rho)
\]
and therefore 
\begin{equation}
\label{ineqInnerOuterSecondIntegral}
 \begin{split}
 \fint_{\mathbb{B}^m_\rho \setminus \Bar{\mathbb{B}}^m_{\eta \rho}} \int_{A \setminus \mathbb{B}^m_{\rho}} 
 \int_{A \cap \Bar{\mathbb{B}}^m_{\eta \rho}} \frac{d_\mathcal{N} \bigl(u (z), u (y)\bigr)^p}{\abs{ x - y}^{m + sp}} \diff x \diff y \diff z
 &\le \frac{\Cl{instr2} \eta^m}{1 - \eta^m} \int_{\mathbb{B}^m_\rho \setminus \Bar{\mathbb{B}}^m_{\eta \rho}} \int_{A \setminus \mathbb{B}^m_{\rho}} 
 \frac{d_\mathcal{N} \bigl(u (z), u (y)\bigr)^p}{\abs{z - y}^{m + sp}} \diff y \diff z\\
 &\le \frac{\Cr{instr2} \eta^m}{1 - \eta} \mathcal{E}^{s, p} \bigl(u, A \setminus \Bar{\mathbb{B}}^m_{\eta \rho}\bigr).
\end{split}
\end{equation}
The conclusion follows then from \eqref{ineqEnergyBalanceInnerOuter}, \eqref{ineqyxzaveraged}, \eqref{ineqInnerOuterFirstIntegral} and \eqref{ineqInnerOuterSecondIntegral} with \(C = 2 \max\{\Cr{instr},  \Cr{instr2}\}\).
\end{proof}
 
\subsection{Patching countably many Sobolev maps}
We want to estimate the energy of a map obtained by patching countable many maps different from a common constant value on disjoint sets $A_i$. If we apply the gluing technique from above (\cref{lemmaBallToRN}) countably many times (which essentially means, for each $i$, estimating the total energy of $u$ by the energy on $A_i$ plus the energy out of $A_i$), since the constants appearing in the statement are larger than \(1\) when \(s \in (0, 1)\), the constant coming from the iterative process will be unbounded and will thus give no estimate in the limit. In order to deal with this situation, we derive a specific bound for the patching of a countable family of maps.

\begin{lem}[Countable patching]\label{subadditive}
Let \(m \in \N_*\), \(s \in (0, 1]\), \(p \in [1, + \infty)\), let \({\mathcal{M}}\) be a Riemannian manifold, \(I\) be a finite or countably infinite set, and for each \(i\in I\), let \(u_i : {\mathcal{M}} \to \mathcal{N}\) be a measurable map. If there exist \(b \in \mathcal{N}\) and a collection \((A_i)_{i \in I}\) of open subsets of \({\mathcal{M}}\) such that if \(x \in {\mathcal{M}} \setminus A_i\) with \(i \in I\), \(u_i (x) = b\) and if \(i, j \in I\) with  \(i \neq j\), \(\Bar{A}_i \cap \Bar{A}_j = \emptyset\), then, if \(u : \mathcal{M} \to \mathcal{N}\) is defined by 
\[
 u (x) = \begin{cases}
          u_i (x) & \text{if \(x \in A_i\)},\\
          b & \text{otherwise},
         \end{cases}
\]
we have 
\[
\mathcal{E}_{s,p}(u,{\mathcal{M}})\leq C\sum_{i\in I}\mathcal{E}_{s,p}(u_i,{\mathcal{M}})
\]
with \(C = 1\) if \(s = 1\) and \(C = 2^p\) if \(s \in (0, 1)\).
\end{lem}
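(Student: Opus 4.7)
The plan is to split the double integral defining $\mathcal{E}_{s,p}(u,\mathcal{M})$ according to the pieces of the partition $\{A_0\} \cup \{A_i\}_{i \in I}$ of $\mathcal{M}$, where $A_0 := \mathcal{M}\setminus\bigcup_i A_i$, and to compare each piece to the corresponding piece of $\mathcal{E}_{s,p}(u_i,\mathcal{M})$. The key observation is that on $A_0$ we have $u = b = u_i$ for every $i$, and on $A_j$ (with $j\neq i$) we have $u_i = b$.

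\textbf{Case $s=1$.} First I would handle this case separately and quickly: since the $A_i$'s are pairwise disjoint open sets and $u = b$ on $A_0$, the weak gradient $Du$ exists and satisfies $Du = Du_i$ a.e.\ on $A_i$ and $Du = 0$ a.e.\ on $A_0$. Moreover, for each $i$, $Du_i = 0$ a.e.\ on $\mathcal{M}\setminus A_i$. Hence
\[
\int_\mathcal{M} \abs{Du}^p = \sum_i \int_{A_i} \abs{Du_i}^p = \sum_i \int_\mathcal{M} \abs{Du_i}^p,
\]
which gives $C = 1$.

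\textbf{Case $0 < s < 1$.} Writing $\mathcal{M}\times\mathcal{M} = \bigsqcup_{i,j \in I\cup\{0\}} A_i\times A_j$, the contribution from $A_0\times A_0$ vanishes since $u\equiv b$ there. For $(x,y)\in A_i\times A_i$ with $i\in I$, one has $d_\mathcal{N}(u(x),u(y)) = d_\mathcal{N}(u_i(x),u_i(y))$. For $(x,y)\in A_i\times A_0$ with $i\in I$, $u(y)=b=u_i(y)$, so again $d_\mathcal{N}(u(x),u(y)) = d_\mathcal{N}(u_i(x),u_i(y))$, and likewise by symmetry on $A_0\times A_i$. For $(x,y)\in A_i\times A_j$ with $i\neq j$, the crucial observation is $u_i(y)=b=u_j(x)$, so by the triangle inequality and the convexity of $t\mapsto\abs{t}^p$,
\[
d_\mathcal{N}(u(x),u(y))^p \le 2^{p-1}\bigl(d_\mathcal{N}(u_i(x),b)^p + d_\mathcal{N}(b,u_j(y))^p\bigr) = 2^{p-1}\bigl(d_\mathcal{N}(u_i(x),u_i(y))^p + d_\mathcal{N}(u_j(x),u_j(y))^p\bigr).
\]

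Summing, and using the symmetry of the Gagliardo kernel to equate the two cross-contributions above after relabeling indices, the cross-pieces are controlled by $2^p \sum_i \int_{A_i}\int_{\mathcal{M}\setminus(A_i\cup A_0)} d_\mathcal{N}(u_i)^p/\abs{\cdot}^{m+sp}$. Combining with the diagonal and boundary pieces and using that $u_i\equiv b$ outside $A_i$ (so $\mathcal{E}_{s,p}(u_i,\mathcal{M})$ splits as the $A_i\times A_i$ integral plus twice the $A_i\times(\mathcal{M}\setminus A_i)$ integral), every piece is majorized by the corresponding piece of $2^p\,\mathcal{E}_{s,p}(u_i,\mathcal{M})$, yielding $\mathcal{E}_{s,p}(u,\mathcal{M}) \le 2^p\sum_i \mathcal{E}_{s,p}(u_i,\mathcal{M})$.

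I do not foresee a serious obstacle: the disjointness hypothesis $\bar A_i\cap \bar A_j=\emptyset$ is not even used in the bound (it is implied by the fact that $u_i=b$ outside $A_i$ making the split well-defined, and presumably needed in applications for matching boundary behavior). The main bookkeeping task is to keep track of the three types of index pairs and confirm that the constants $1$, $2$, $2^p$ arising on the three regions all fit under a uniform factor of $2^p$ relative to the decomposition of $\mathcal{E}_{s,p}(u_i,\mathcal{M})$, which is immediate since $p \ge 1$.
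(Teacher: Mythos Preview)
Your approach is essentially the same as the paper's: the same decomposition of $\mathcal{M}\times\mathcal{M}$ into diagonal, boundary, and cross pieces in the fractional case, the same use of the triangle inequality through $b$ on the cross pieces, and the same bookkeeping to collect a uniform factor of $2^p$.

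Two small points where the paper is more careful. First, in the case $s=1$ with $I$ countably infinite, you assert that the weak gradient $Du$ exists; the paper establishes this by truncating to finite index sets $\{0,\dotsc,n\}$, applying the finite-$I$ argument, and then invoking lower semicontinuity of $\mathcal{E}_{1,p}$ under almost-everywhere convergence to pass to the limit. This is where the hypothesis $\bar A_i\cap\bar A_j=\emptyset$ enters: for finite $I$ it guarantees that $\mathcal{M}\setminus\bigcup_{j\ne i}\bar A_j$ is open and contains $A_i$, so $u$ coincides with the weakly differentiable map $u_i$ on an open neighbourhood of $\bar A_i$, which is what makes the weak differentiability of $u$ immediate. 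So your remark that the disjoint-closures hypothesis ``is not even used in the bound'' is correct for the fractional case but not quite for $s=1$.
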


\begin{proof}
First, we consider the case where \(s = 1\) and the set \(I\) is finite. For each \(i \in I\), if \(\mathcal{E}_{1, p} (u_i, {\mathcal{M}}) < + \infty\), then the function \(u\) is weakly differentiable on the set \({\mathcal{M}} \setminus \bigcup_{j \in I \setminus \{i\}} \Bar{A}_j\), which is open since \(I\) is finite, and its energy on this set is controlled by \(\mathcal{E}_{1,p}(u_i,{\mathcal{M}})\). Therefore the function \(u\) is weakly differentiable and, by additivity of the integral, we have
\[
 \mathcal{E}_{1, p} (u,{\mathcal{M}}) \le \sum_{i \in I} \mathcal{E}_{1, p} (u_i,{\mathcal{M}}).
\]
If the set \(I\) is countably infinite, we can write that \(I = \mathbb{N}\) and we can define 
\[
  u^n (x) = \begin{cases}
          u_i (x) & \text{if \(x \in A_i\) and \(i \in \{0, \dotsc, n\}\)},\\
          b & \text{otherwise},
         \end{cases}
\]
By the first part of the proof
\[
\mathcal{E}_{1, p} (u^n,{\mathcal{M}}) \le \sum_{i \in \{0, \dotsc, n\}} \mathcal{E}_{1, p} (u_i,{\mathcal{M}}) \le \sum_{i \in \mathbb{N}} \mathcal{E}_{1, p} (u_i,{\mathcal{M}}).
\]
The conclusion follows from the fact that \((u^n)_{n \in \N}\) converges almost everywhere to \(u\) and the lower semi-continuity of the Sobolev energy under the almost everywhere convergence. 

\medskip

We assume now that \(0 < s < 1\).
We write, by additivity of the integral and the fact that \(u (x) = u (y)\) if \((x, y) \in ({\mathcal{M}} \setminus \bigcup_{i \in I} A_i)\times ({\mathcal{M}} \setminus \bigcup_{i \in I} A_i)\),
\begin{multline*}
 \mathcal{E}_{s, p} (u, {\mathcal{M}})
 = \sum_{i \in I} \mathcal{E}_{s, p} (u, A_i)
 + \sum_{\substack{i, j \in I\\ i \ne j}} \int_{A_i} \int_{A_j} \frac{d_\mathcal{N} \bigl(u (x), u(y)\bigr)^p}{d_\mathcal{M}(x,y)^{m + sp}} \diff x \diff y\\
 + 2 \sum_{i \in I} \int_{A_i} \int_{{\mathcal{M}} \setminus \bigcup_{j \in I}A_j} \frac{d_\mathcal{N}\bigl(u (x), u (y)\bigr)^p}{d_\mathcal{M}(x,y)^{m + sp}} \diff x \diff y.
\end{multline*}
We first observe that, by assumption, 
\[
 \mathcal{E}_{s, p} (u, A_i) = \mathcal{E}_{s, p} (u_i, A_i),
\]
and 
\[
 \int_{A_i}\int_{{\mathcal{M}} \setminus \bigcup_{j \in I}A_j} \frac{d_\mathcal{N}\bigl(u (x), u (y)\bigr)^p}{d_\mathcal{M}(x,y)^{m + sp}} \diff x \diff y
 =\int_{A_i} \int_{{\mathcal{M}} \setminus \bigcup_{j \in I}A_j} \frac{d_\mathcal{N} \bigl(u_i (x), u_i (y)\bigr)^p}{d_\mathcal{M}(x,y)^{m + sp}} \diff x \diff y.
\]
Finally, if \(i, j \in I\) and \(i \ne j\), we have 
\[
\begin{split}
 \int_{A_i} \int_{A_j} &\frac{d_\mathcal{N} \bigl(u (x), u(y)\bigr)^p}{d_\mathcal{M}(x,y)^{m + sp}} \diff x \diff y\\
 &\le  2^{p - 1}\biggl(\int_{A_i} \int_{A_j} \frac{d_\mathcal{N} \bigl(u (x), b\bigr)^p}{d_\mathcal{M}(x,y)^{m + sp}} \diff x \diff y + \int_{A_i} \int_{A_j} \frac{d_\mathcal{N} \bigl(b, u(y)\bigr)^p}{d_\mathcal{M}(x,y)^{m + sp}} \diff x \diff y\biggr)\\
 & =  2^{p - 1}\biggl(\int_{A_i} \int_{A_j} \frac{d_\mathcal{N}\bigl(u_j (x), u_j (y)\bigr)^p}{d_\mathcal{M}(x,y)^{m + sp}} \diff x \diff y + \int_{A_i} \int_{A_j} \frac{d_\mathcal{N}\bigl(u_i(x), u_i (y)\bigr)^p}{d_\mathcal{M}(x,y)^{m + sp}} \diff x \diff y\biggr).
\end{split}
\]
Therefore, we have 
\begin{multline*}
 \mathcal{E}_{s, p} (u, {\mathcal{M}})
 \le \sum_{i \in I} \mathcal{E}_{s, p} (u_i, A_i)
 +  2^p \sum_{\substack{i, j \in I\\ i \ne j}} \int_{A_i} \int_{A_j} \frac{d_\mathcal{N}\big(u_i (x), u_i(y)\big)^p}{d_\mathcal{M}(x,y)^{m + sp}} \diff x \diff y\\
 + 2 \sum_{i \in I} \int_{A_i} \int_{{\mathcal{M}} \setminus \bigcup_{j \in I}A_j} \frac{d_\mathcal{N}\big(u_i (x), u_i (y)\big)^p}{d_\mathcal{M}(x,y)^{m + sp}} \diff x \diff y,
\end{multline*}
which implies that \(\mathcal{E}_{s,p}(u,{\mathcal{M}})\le 2^{p} \sum_{i \in I} \mathcal{E}_{s, p} (u_i, {\mathcal{M}})\).
\end{proof}

\subsection{Extension}
In the application of the opening construction (\cref{openingLemma}), because the change of variable \(\varphi_a\) is completely known a priori, we will need to define our map \(u\) on a slightly larger domain and with a control on the energy.

\begin{lem}[Extension]
\label{extensionLemma}
Let \(m \in \N_\ast\), \(s \in (0,1]\), \(p \in [1, + \infty)\) and \(\lambda \geq 1\).
There exists \(C > 0\) such that if \(\rho > 0\) and \(u : \mathbb{B}^m_\rho \to \mathcal{N}\) is measurable, there exists \(v : \mathbb{B}^m_{\lambda \rho} \to \mathcal{N}\) such that \(v = u\) on \(\mathbb{B}^m_\rho\) and 
\[
 \mathcal{E}_{s, p} (v, \mathbb{B}^m_{\lambda \rho}) \le C\, \mathcal{E}_{s,p}(u,\mathbb{B}^m_\rho).
\]
\end{lem}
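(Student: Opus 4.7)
The natural extension is \emph{radial inversion through the unit sphere}. By the scaling invariance of the statement I may assume $\rho = 1$ and produce a constant depending only on $m$, $s$, $p$ and $\lambda$. I set $v = u$ on $\mathbb{B}^m_1$ and $v(x) = u(\psi(x))$ on the annulus $A := \mathbb{B}^m_\lambda \setminus \mathbb{B}^m_1$, where $\psi(x) = x/\abs{x}^2$. The map $\psi$ is a smooth involution which is bi-Lipschitz from $A$ onto $\{y \in \R^m : 1/\lambda \le \abs{y} \le 1\}$ with constants depending only on $\lambda$, agrees with the identity on the unit sphere, and has Jacobian bounded above and below, so $v$ is well-defined and measurable on $\mathbb{B}^m_\lambda$ and coincides with $u$ on $\mathbb{B}^m_1$.

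When $s = 1$, the chain rule and the change of variable $\xi = \psi(x)$ give $\mathcal{E}_{1,p}(v, A) \le C\, \mathcal{E}_{1,p}(u, \psi(A)) \le C\, \mathcal{E}_{1,p}(u, \mathbb{B}^m_1)$, and the fact that $\psi = \id$ on the unit sphere implies $v \in W^{1,p}(\mathbb{B}^m_\lambda)$ with $\mathcal{E}_{1,p}(v, \mathbb{B}^m_\lambda) = \mathcal{E}_{1,p}(u, \mathbb{B}^m_1) + \mathcal{E}_{1,p}(v, A)$. When $0 < s < 1$ I split the Gagliardo double integral on $\mathbb{B}^m_\lambda \times \mathbb{B}^m_\lambda$ into three contributions according to whether each point lies in $\mathbb{B}^m_1$ or in $A$. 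The inner--inner piece equals $\mathcal{E}_{s,p}(u, \mathbb{B}^m_1)$, and the outer--outer piece is handled by the change of variable $(\xi, \eta) = (\psi(x), \psi(y))$ combined with the bi-Lipschitz estimate $\abs{\xi - \eta} \le \mathrm{Lip}(\psi) \abs{x - y}$, yielding a bound by $C\, \mathcal{E}_{s,p}(u, \mathbb{B}^m_1)$.

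The crux is the \emph{cross term}
\[
  2 \int_{\mathbb{B}^m_1}\!\int_A \frac{d_\mathcal{N}\bigl(u(y), u(\psi(x))\bigr)^p}{\abs{x - y}^{m + sp}} \diff x \diff y,
\]
where the weight lives on $A \times \mathbb{B}^m_1$ but the numerator sits naturally over $\psi(A) \times \mathbb{B}^m_1 \subset \mathbb{B}^m_1 \times \mathbb{B}^m_1$, so the factors do not match under the change of variable. The key elementary estimate is $\abs{\psi(x) - y} \le 3 \abs{x - y}$ for every $x \in A$ and $y \in \mathbb{B}^m_1$, obtained from the triangle inequality together with the identity $\abs{\psi(x) - x} = (\abs{x}^2 - 1)/\abs{x}$ and the bound $\abs{x - y} \ge \abs{x} - 1$, which yield $\abs{\psi(x) - x}/\abs{x - y} \le (\abs{x} + 1)/\abs{x} \le 2$. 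This lets me replace $\abs{x - y}^{-(m + sp)}$ by $3^{m + sp} \abs{\psi(x) - y}^{-(m + sp)}$, after which the change of variable $\xi = \psi(x)$ turns the cross term into a Gagliardo integrand on $\psi(A) \times \mathbb{B}^m_1$ bounded by $C\, \mathcal{E}_{s,p}(u, \mathbb{B}^m_1)$. Summing the three contributions yields the lemma.
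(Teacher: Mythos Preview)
Your proof is correct and follows essentially the same route as the paper: extend by the inversion \(\psi(x)=x/\abs{x}^2\), split the Gagliardo energy into inner--inner, outer--outer and cross contributions, and bound each by \(\mathcal{E}_{s,p}(u,\mathbb{B}^m_1)\). The only difference is in the technical estimates: the paper uses the exact identities \(\abs{x}^2\abs{x/\abs{x}^2-y}^2=\abs{x-y}^2+(1-\abs{x}^2)(1-\abs{y}^2)\) and \(\abs{x}^2\abs{y}^2\abs{x/\abs{x}^2-y/\abs{y}^2}^2=\abs{x-y}^2\) to compare the denominators after changing variables, whereas you obtain the same comparison via the triangle inequality \(\abs{\psi(x)-y}\le 3\abs{x-y}\) for the cross term and the bi-Lipschitz bound on \(\psi\vert_A\) for the outer--outer term; both lead to the same conclusion with constants depending only on \(m,s,p,\lambda\).
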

\begin{proof}
This proof is classical. For the convenience of the reader, we sketch an argument based on Euclidean inversion. By scaling we can assume that \(\rho = 1\) and we define the map \(v : \mathbb{B}^m_\lambda \to \mathcal{N}\) by setting
\begin{equation}
\label{eqDefvInversion}
 v (x)
 = \begin{cases}
     u( x) & \text{if \(\abs{x} < 1\)},\\
     u (x/\abs{x}^2) & \text{if \(\abs{x} > 1\)}.
   \end{cases}
\end{equation}
If \(s = 1\), one can check that 
\[
 \mathcal{E}_{1, p} (v, \mathbb{B}^m_\lambda) = \int_{\mathbb{B}^m_1} \abs{D u}^p + \int_{\mathbb{B}^m_1 \setminus \mathbb{B}^m_{1/\lambda}} \frac{\abs{D u (x)}^p}{\abs{x}^{2 (m - p)}} \diff x 
 \le \bigl(1 + \lambda^{2( m - p)_+}\bigr) \, \mathcal{E}_{1, p} (u, \mathbb{B}^m_1).
\]
When \(0 < s < 1\), we have by a change of variable
\begin{multline*}
  \mathcal{E}_{s, p} (v, \mathbb{B}^m_\lambda) = \int_{\mathbb{B}^m_1} \int_{\mathbb{B}^m_1} \frac{d_\mathcal{N}\bigl(u (x), u (y)\bigr)^p}{\abs{x - y}^{m + sp} }\diff x \diff y
  + 2 \int_{\mathbb{B}^m_1} \int_{\mathbb{B}^m_{1/\lambda}} \frac{d_\mathcal{N}\bigl(u (x), u (y)\bigr)^p}{\abs{x/\abs{x}^2 - y}^{m + sp} \abs{x}^{2 m}}\diff x \diff y\\
  + \int_{\mathbb{B}^m_1 \setminus \mathbb{B}^m_{1/\lambda}}\int_{\mathbb{B}^m_1 \setminus \mathbb{B}^m_{1/\lambda}} \frac{d_\mathcal{N}\bigl(u (x), u (y)\bigr)^p}{\abs{x/\abs{x}^2 - y/\abs{y}^2}^{m + sp} \,\abs{x}^{2 m}\,\abs{y}^{2m}}\diff x \diff y.
\end{multline*}
We observe that if \(x, y \in \mathbb{B}^m_1\), we have 
\(
 \abs{x}^2\, \abs{x/\abs{x}^2 - y}^2 = \abs{x - y}^2 + (1 - \abs{x}^2)\,(1 - \abs{y}^2)\ge \abs{x - y}^2
\)
and \(\abs{x}^2\, \abs{y}^2\, \abs{x / \abs{x}^2 - y/\abs{y}^2}^2 = \abs{x - y}^2 \); therefore,
\begin{equation*}
\begin{split}
  \mathcal{E}_{s, p} (v, \mathbb{B}^m_\lambda) &= \int_{\mathbb{B}^m_1} \int_{\mathbb{B}^m_1} \frac{d_\mathcal{N}\bigl(u (x), u (y)\bigr)^p}{\abs{x - y}^{m + sp} }\diff x \diff y
  + 2 \int_{\mathbb{B}^m_1} \int_{\mathbb{B}^m_{1/\lambda}} \frac{d_\mathcal{N}\bigl(u (x), u (y)\bigr)^p}{\abs{x - y}^{m + sp} \abs{x}^{m - sp}}\diff x \diff y\\
  &\qquad \qquad + \int_{\mathbb{B}^m_1 \setminus \mathbb{B}^m_{1/\lambda}}\int_{\mathbb{B}^m_1 \setminus \mathbb{B}^m_{1/\lambda}} \frac{d_\mathcal{N}\bigl(u (x), u (y)\bigr)^p}{\abs{x - y}^{m + sp} \abs{x}^{m - sp}\abs{y}^{m -sp}}\diff x \diff y\\
  &\le \bigl(1 + \lambda^{2(m - sp)_+}\bigr) \,\mathcal{E}_{s,p} (u, \mathbb{B}^m_1).\qedhere
\end{split}
\end{equation*}
\end{proof}

\section{General uniform boundedness principle}%
\label{proofUBP}

\subsection{Obtaining a single obstruction} 
We will prove a slightly refined version of the contraposite of \cref{uniform}.
\begin{theorem}
\label{theoremCounterexample}
Let \(m \in \N_*\), \(s \in (0, 1]\), \(p \in [1, +\infty)\), \(\mathcal{N}\) be a connected Riemannian manifold, and let \(\mathcal{G}\) be an  energy over $\R^m$ with state space $\mathcal{N}$. 
Assume that for every measurable map \(u:\R^m\to\mathcal{N}\)
\begin{enumerate}[(i)]
\item (superadditivity) for all open sets \(A,B\subset\R^m\) with disjoint closure,
\[
\mathcal{G}\,(u,A\cup B)\geq \mathcal{G}\,(u,A)+\mathcal{G}\,(u,B),
\]
\item (scaling) for every $\lambda>0$, every $h\in\R^m$ and every open set $A\subset\R^m$, one has 
\[
\mathcal{G}\,(u ,h+\lambda A)=\lambda^{m-sp} \,\mathcal{G}\,\bigl(u(h + \lambda \cdot),A\bigr).
\]
\end{enumerate}
\begin{description}
\item[Subcritical case]
If \(sp < m\) and if there exists a sequence \((u_n)_{n \in \N}\) of measurable maps from \(\mathbb{B}^m \) to \(\mathcal{N}\) such that for each \(n \in \N\), \(\mathcal{E}_{s, p} (u_n, \mathbb{B}^m) > 0\), \(\mathcal{G}\,(u_n,\mathbb{B}^m)<+\infty\), and 
\[
  \lim_{n \to \infty} \frac{\mathcal{G}\,(u_n, \mathbb{B}^m)}{\mathcal{E}_{s, p} (u_n, \mathbb{B}^m)} = + \infty,
\]
then for every \(b_* \in \mathcal{N}\) and every \(\varepsilon > 0\)  there exists a measurable map \(u : \mathbb{R}^m \to \mathcal{N}\) such that \(\mathcal{E}_{s, p} (u, \mathbb{R}^m) \le \varepsilon \), \(u = b_*\) in \(\R^m \setminus \mathbb{B}^m_{1/2}\) and \(\mathcal{G}\,(u, \mathbb{B}^m) = + \infty\).
\item[Critical case]
If \(sp=m\), \(s < m\), and if there exists a sequence \((u_n)_{n \in \N}\) of measurable maps from \(\mathbb{B}^m\) to \(\mathcal{N}\) such that for each \(n\in\N\),  \(\mathcal{E}_{s,p}(u_n,\mathbb{B}^m)>0\), \(\mathcal{G}\,(u_n,\mathbb{B}^m)<+\infty\), and
\[
\lim_{n\to\infty}\mathcal{E}_{s,p}(u_n,\mathbb{B}^m)=0\quad\text{and}\quad\lim_{n \to \infty} \frac{\mathcal{G}\,(u_n, \mathbb{B}^m)}{\mathcal{E}_{s,p} (u_n, \mathbb{B}^m)} = + \infty,
\]
then for every \(b_* \in \mathcal{N}\) and every \(\varepsilon > 0\), there exists a measurable map \(u : \mathbb{R}^m \to \mathcal{N}\) such that \(\mathcal{E}_{s, p} (u, \mathbb{R}^m) \le \varepsilon\), \(u = b_*\) in \(\R^m \setminus \mathbb{B}^m_{1/2}\) and \(\mathcal{G}\,(u, \mathbb{B}^m) = + \infty\).
\item[Supercritical case]
If \(sp >m\) or \(s = p = m = 1\), if \(\mathcal{N}\) is compact, and if there exists a sequence \((u_n)_{n \in \N}\) of measurable maps from \(\mathbb{B}^m\) to \(\mathcal{N}\) such that for each \(n\in\N\),  \(\mathcal{E}_{s,p}(u_n,\mathbb{B}^m)>0\), \(\mathcal{G}\,(u_n,\mathbb{B}^m)<+\infty\), and
\[
\lim_{n\to\infty}\mathcal{E}_{s,p}(u_n,\mathbb{B}^m)=0\quad\text{and}\quad\lim_{n \to \infty} \frac{\mathcal{G}\,(u_n, \mathbb{B}^m)}{\mathcal{E}_{s, p} (u_n, \mathbb{B}^m)} = + \infty,
\]
then for every \(\varepsilon > 0\), there exists \(b_* \in \mathcal{N}\) and a measurable map \(u : \mathbb{R}^m \to \mathcal{N}\) such that \(\mathcal{E}_{s, p} (u, \mathbb{R}^m) \le \varepsilon\), \(u = b_*\) in \(\R^m \setminus \mathbb{B}^m_{1/2}\) and \(\mathcal{G}\,(u, \mathbb{B}^m) = + \infty\).
\end{description}
\end{theorem}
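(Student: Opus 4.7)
The plan is to prove the contrapositive of \cref{uniform} by constructing, from the bad sequence $(u_n)$, a single map $u$ on $\R^m$ equal to $b_*$ off $\mathbb{B}^m_{1/2}$, with $\mathcal{E}_{s,p}(u, \R^m) \le \varepsilon$ and $\mathcal{G}(u, \mathbb{B}^m) = +\infty$. First I would pass to a subsequence so that $\mathcal{G}(u_n, \mathbb{B}^m) \ge 4^n \mathcal{E}_{s,p}(u_n, \mathbb{B}^m)$ holds for every $n$, and in the critical and supercritical regimes also $\mathcal{E}_{s,p}(u_n, \mathbb{B}^m) \le 2^{-n}$ (using the extra hypothesis that $\mathcal{E}_{s,p}(u_n) \to 0$).

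The first main construction is an \emph{opening and gluing} step that turns each $u_n$ into a map $\tilde u_n : \R^m \to \mathcal{N}$ equal to $b_*$ outside $\mathbb{B}^m$. One extends $u_n$ to a slightly larger ball via \cref{extensionLemma} and composes with a Lipschitz map $\varphi$ that is the identity on an interior ball $\mathbb{B}^m_{\rho_0}$ and collapses a thin boundary annulus onto a single point $y_0$; the point $a_n$ given by \cref{openingLemma} produces a map $\check u_n$ on $\mathbb{B}^m$ with $\mathcal{E}_{s,p}(\check u_n, \mathbb{B}^m) \le C\,\mathcal{E}_{s,p}(u_n, \mathbb{B}^m)$, coinciding with $u_n$ on an interior ball and equal to a constant $b_n \in \mathcal{N}$ on a boundary annulus of $\mathbb{B}^m$. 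Exploiting the connectedness of $\mathcal{N}$, one then interpolates from $b_n$ to $b_*$ along a Lipschitz path on a further buffer annulus, absorbing the cost with \cref{lemmaBallToRN}. In the supercritical regime the path length must be uniformly controlled, which is where the compactness of $\mathcal{N}$ enters and forces $b_*$ to be chosen as an accumulation point of $(b_n)$. Extending by $b_*$ outside $\mathbb{B}^m$ yields $\tilde u_n$ with $\mathcal{E}_{s,p}(\tilde u_n, \R^m) \lesssim \mathcal{E}_{s,p}(u_n, \mathbb{B}^m)$ and $\mathcal{G}(\tilde u_n, \mathbb{B}^m) \ge \mathcal{G}(u_n, \mathbb{B}^m_{\rho_0})$ by monotonicity of $\mathcal{G}$.

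The second main construction is the \emph{patching}. Choose disjoint balls $\mathbb{B}^m_{r_n}(x_n) \subset \mathbb{B}^m_{1/2}$ and set $u(y) = \tilde u_n((y-x_n)/r_n)$ for $y \in \mathbb{B}^m_{r_n}(x_n)$ and $u(y) = b_*$ elsewhere; this is coherent since each $\tilde u_n$ is $b_*$ near $\partial \mathbb{B}^m$. Combining \cref{subadditive} for $\mathcal{E}_{s,p}$, the scaling hypothesis and the superadditivity of $\mathcal{G}$ one obtains
\begin{equation*}
 \mathcal{E}_{s,p}(u,\R^m) \le C \sum_n r_n^{m-sp}\mathcal{E}_{s,p}(\tilde u_n,\mathbb{B}^m), \qquad \mathcal{G}(u,\mathbb{B}^m) \ge \sum_n r_n^{m-sp}\mathcal{G}(\tilde u_n,\mathbb{B}^m).
\end{equation*}
Pick $r_n$ so that $r_n^{m-sp}\mathcal{E}_{s,p}(\tilde u_n,\mathbb{B}^m) = 2^{-n}\varepsilon$; the first sum is then at most $\varepsilon$, while each term of the second equals $2^{-n}\varepsilon\,\mathcal{G}(\tilde u_n,\mathbb{B}^m)/\mathcal{E}_{s,p}(\tilde u_n,\mathbb{B}^m)$, which can be driven to $+\infty$ by a further subsequence extraction; disjointness in $\mathbb{B}^m_{1/2}$ is ensured by checking that $r_n$ remains bounded and the volumes $r_n^m$ are summable.

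The three cases differ only in the scaling factor $r_n^{m-sp}$: when $sp < m$ one has $r_n^{m-sp} \to 0$ as $r_n \to 0$, giving full freedom; when $sp = m$ the factor is $1$, so the smallness $\mathcal{E}_{s,p}(u_n) \to 0$ is what provides summability while $r_n \to 0$ only serves to pack infinitely many disjoint copies into $\mathbb{B}^m_{1/2}$; when $sp > m$ small $r_n$ amplifies both energies, which must be compensated by rapid decay of $\mathcal{E}_{s,p}(u_n)$ together with the compactness of $\mathcal{N}$ entering the gluing step. The main technical obstacle is to arrange that $\mathcal{G}(\tilde u_n,\mathbb{B}^m)/\mathcal{E}_{s,p}(\tilde u_n,\mathbb{B}^m)$ still blows up after the opening: monotonicity only yields $\mathcal{G}(\tilde u_n,\mathbb{B}^m) \ge \mathcal{G}(u_n,\mathbb{B}^m_{\rho_0})$, so one must either pre-rescale $u_n$ to localize its $\mathcal{G}$-energy inside $\mathbb{B}^m_{\rho_0}$ or choose the opening parameters $\rho_0$, $\eta$ carefully (and possibly depending on $n$) to preserve the divergence of the ratio.
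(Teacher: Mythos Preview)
Your overall architecture (open each $u_n$ to be constant near the boundary, interpolate to a common value $b_*$, then patch rescaled copies into disjoint balls) matches the paper's, but there is a genuine missing ingredient between the opening and the patching: the paper's \emph{clustering} step, which your scheme does not supply.

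First, the ``main technical obstacle'' you flag at the end is not one. If you extend $u_n$ to $\mathbb{B}^m_3$ by \cref{extensionLemma} and choose $\varphi$ to be the identity on $\mathbb{B}^m_2$ and constant on $\mathbb{B}^m_5\setminus\mathbb{B}^m_3$, then after applying \cref{openingLemma} at that larger scale the opened map coincides with $u_n$ on the \emph{full} ball $\mathbb{B}^m$, so $\mathcal{G}(u_n^{\mathrm{opn}},\mathbb{B}^m)=\mathcal{G}(u_n,\mathbb{B}^m)$ exactly; no pre-rescaling or $n$-dependent opening parameters are needed.

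The real gap is twofold. (a) The interpolation from the opened boundary value $b_n$ to the prescribed $b_*$ costs $\mathcal{E}_{s,p}(v_n,\R^m)\sim d_{\mathcal{N}}(b_n,b_*)^p$, and in the subcritical case there is no bound on this in terms of $\mathcal{E}_{s,p}(u_n,\mathbb{B}^m)$ (recall $\mathcal{N}$ is merely connected). Adding $v_n$ can therefore destroy the ratio $\mathcal{G}(\tilde u_n)/\mathcal{E}_{s,p}(\tilde u_n)$. (b) Your choice $r_n^{m-sp}=2^{-n}\varepsilon/\mathcal{E}_{s,p}(\tilde u_n,\mathbb{B}^m)$ need not give $r_n$ small, or even $\le 1/2$: in the subcritical case $\mathcal{E}_{s,p}(u_n,\mathbb{B}^m)$ has no lower bound, so $r_n$ can be arbitrarily large and the balls cannot be made disjoint inside $\mathbb{B}^m_{1/2}$. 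The assertion ``$r_n$ remains bounded and the volumes $r_n^m$ are summable'' is exactly what fails.

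The paper resolves both issues at once by inserting, between the opening and the final gluing, a \emph{clustering} step: it places $k=k(n)$ disjoint rescaled copies of $u_n^{\mathrm{opn}}$ inside $\mathbb{B}^m_{1/2}$ together with the interpolation map $v_n$, and chooses $k$ so large that the resulting energy is at least a fixed constant $\nu>0$ (in particular dominating the uncontrolled cost of $v_n$), then rescales by some $\lambda\in(0,1]$ to hit $\nu$ exactly. By superadditivity and scaling of $\mathcal{G}$ the ratio is preserved up to a fixed factor, and now every $u_n^{\mathrm{clstr}}$ has the \emph{same} energy $\nu$, so the patching radii can be taken as $\rho_n=2^{-n-2}/\sqrt{m}$ independently of the maps. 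In the critical case the scale invariance forces a variant: one uses functions $w_\ell$ of vanishing critical energy (\cref{nullCapacity}) to make the interpolation cost small, and the hypothesis $\mathcal{E}_{s,p}(u_n)\to 0$ replaces the freedom lost in scaling. In the supercritical case compactness of $\mathcal{N}$ gives $b_n\to b_*$ along a subsequence, so the interpolation cost is small and one scales \emph{inwards} (using $\rho^{sp-m}\to\infty$) to normalise. Without something playing the role of this clustering/normalisation step, your patching argument does not close.
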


The proof of \cref{theoremCounterexample} follows the proof of the uniform boundedness for the weak-bounded approximation problem when \(s = 1\) \cite{Hang:2003}*{Theorem 9.6}.

\begin{proof}[Proof of \cref{theoremCounterexample} in the subcritical case \(sp<m\)]
\resetconstant
By passing if necessary to a subsequence, we can assume that for each \(n \in \N\), there exists a function $u_n: \mathbb{B}^m \to \mathcal{N}$ such that 
\[
 0< \mathcal{E}_{s,p} (u_n,\mathbb{B}^m)  \le \mu^{-n} \mathcal{G}\,(u_n,\mathbb{B}^m) < + \infty,
\]
where the parameter \(\mu > 1\) will be fixed later in the proof.
\medbreak

\noindent \emph{Step 1: Extension.}
By \cref{extensionLemma}, for each \(n \in \N\), there exists a function \(u_n^\mathrm{ext} : \mathbb{B}^m_3 \to \mathcal{N}\) such that 
\(u_n^\mathrm{ext} = u_n\) on \(\mathbb{B}^m\) and 
\[
 \mathcal{E}_{s, p} (u_n^\mathrm{ext}, \mathbb{B}^m_3) \le \Cl{unextc} \mathcal{E}_{s, p} (u_n, \mathbb{B}^m).
\]
In particular, we have, 
\begin{equation}
\label{estimateExtended}
  \mathcal{E}_{s, p} (u_n^\mathrm{ext}, \mathbb{B}^m_3) \le \Cr{unextc} \mathcal{E}_{s, p} (u_n, \mathbb{B}^m)
  \le \Cr{unextc} \mu^{-n} \mathcal{G}\,(u_n, \mathbb{B}^m) = \Cr{unextc} \mu^{-n} \mathcal{G}\,(u_n^\mathrm{ext}, \mathbb{B}^m).
\end{equation}

\medbreak

\noindent \emph{Step 2: Opening.}
We prove that we can make the map $u_n^\mathrm{ext}$ constant out of the ball $\mathbb{B}^n_2$. 
We take a Lipschitz-continous map \(\varphi : \mathbb{B}^m_6 \to \mathbb{B}^m_2\) such that \(\varphi (x)= x \) if \(\abs{x} \le 2\), and \(\varphi (x) = 0\) if \(\abs{x} \ge 3\). By \cref{openingLemma} with \(\rho = 5\), \(\eta = \frac{1}{5}\) and \(\lambda = \frac{3}{5}\), there exists a point \(a_n \in \mathbb{B}^m_1\) such that 
\[
 \mathcal{E}_{s, p} \bigl(u_n^\mathrm{ext}  \compose (\varphi ( \cdot - a_n) + a_n) , \mathbb{B}^m_5\bigr) \le \Cl{unopnc} \mathcal{E}_{s, p} (u_n^\mathrm{ext}, \mathbb{B}^m_3).
\]
By the conditions that we have imposed on \(\varphi\), if \(\abs{x} \le 1\) then \(\varphi (x - a_n) + a_n = x\) and 
\[
  \bigl(u_n^\mathrm{ext} \compose (\varphi ( \cdot - a_n) + a_n)\bigr) (x) = u_n^{\mathrm{ext}} (x) = u_n (x),
\]
whereas if \(\abs{x} \ge 4\), then \(\abs{x - a_n} \ge 3\) and \(\varphi (x - a_n) + a_n = a_n\) and thus 
\[
 \bigl(u_n^\mathrm{ext} \compose (\varphi ( \cdot - a_n) + a_n)\bigr) (x) = b_n := u_n^\mathrm{ext} (a_n).
\]
We define the map \(u_n^{\mathrm{opn}} : \R^m \to \mathcal{N}\) by
\[
u_n^\mathrm{\mathrm{opn}} (x)=
\begin{cases}
u_n^\mathrm{ext} \bigl(\varphi (x - a_n) + a_n)\bigr) & \text{if }x \in \mathbb{B}^m_4,\\
b_n &\text{if }x\in\R^m\setminus\mathbb{B}^m_4.
\end{cases}
\]
By construction, \(u_n^\mathrm{\mathrm{opn}} = u_n^\mathrm{ext} = u_n\) in \(\mathbb{B}^m\) and \(u_n^{\mathrm{opn}} = b_n\) in \(\R^m \setminus \mathbb{B}^m_4\). Moreover,
by \cref{lemmaBallToRN},
\[
 \mathcal{E}_{s, p} (u_n^{\mathrm{opn}}, \R^m) \le \Cl{itpxl} \mathcal{E}_{s, p} \bigl(u_n^\mathrm{ext} \compose  (\varphi ( \cdot - a_n) + a_n), \mathbb{B}^m_5\bigr).
\]
Finally, we have by \eqref{estimateExtended},
\begin{equation}
\label{IneqResultOpening}
 \mathcal{E}_{s, p} (u_n^\mathrm{\mathrm{opn}}, \R^m) \le \Cr{itpxl}\Cr{unopnc} \,\mathcal{E}_{s, p} (u_n^\mathrm{ext}, \mathbb{B}^m_3)
  \le \Cr{itpxl} \Cr{unopnc} \Cr{unextc} \mu^{-n} \,\mathcal{G}\,(u_n, \mathbb{B}^m) = \mu^{-n} \Cl{unopncb} \,\mathcal{G}\,(u_n^\mathrm{\mathrm{opn}}, \mathbb{B}^m),
\end{equation}
with \(\Cr{unopncb} =  \Cr{itpxl}\Cr{unopnc} \Cr{unextc}\).

\medbreak
\noindent \emph{Step 3: Clustering the maps.}
We fix a point \(b_* \in \mathcal{N}\).
Since the manifold $\mathcal{N}$ is connected, for each \(n \in \N\), there exists a smooth map \(v_n : \R^m \to \mathcal{N}\) such that \(v_n = b_n\) in \(\mathbb{B}^m_{1/2}\) and \(v_n = b_*\) in \(\mathbb{R}^m \setminus \mathbb{B}^m_{1}\).
We have by \cref{lemmaBallToRN} and by the smoothness of \(v_n\)
\[
 \mathcal{E}_{s,p}(v_n,\R^m) \le \Cl{rpxj} \bigl(\mathcal{E}_{s, p} (v_n, \mathbb{B}^m_2) + \mathcal{E}_{s, p} (v_n, \R^m \setminus \mathbb{B}^m_{1})\bigr)
 = \Cr{rpxj} \mathcal{E}_{s, p} (v_n, \mathbb{B}^m_2) < +\infty.
\]
The ball \(\mathbb{B}^m_{1/2}\) contains a cube \(Q\) of side-length \(1/\sqrt{m}\) that can be decomposed 
for each \(k \in \mathbb{N}_*\) into \(k^m\) cubes of side-length \(1/(k\sqrt{m})\). In particular, there is a set \(P_k \subset \mathbb{B}^m_{1/2}\) such that \(\# P_k = k^m\) and the balls \((\mathbb{B}^m_{1/(2k \sqrt{m})} (c))_{c \in P_k}\) are disjoint subsets of \(\mathbb{B}^m_{1/2}\).
We define for each \(c \in P_k\) the map 
\[
 v_{n, k, c} (x) = u_n^{\mathrm{opn}} \bigl(16 k\sqrt{m} (x - c) \bigr).
\]
and we observe that \(v_{n, k, c} (x) = b_n\) if \(x \in \R^m \setminus \mathbb{B}^m_{1/(4k \sqrt{m})} (c)\).
We define now
\[
  v_{n, k}(x) = 
  \begin{cases}
     v_{n,k,c} (x) & \text{if \(c \in P_k\) and \(x \in \mathbb{B}^m_{1/(2k \sqrt{m})} (c)\)},\\
     v_n (x) & \text{otherwise}.
  \end{cases}
\]
On the one hand, by an application of \cref{subadditive}, we have 
\[
\begin{split}
 \mathcal{E}_{s, p} (v_{n,k}, \R^m)
 &\le \Cl{pyxac} \Bigl(\mathcal{E}_{s,p} (v_n, \R^m) + \sum_{c \in P_c} \mathcal{E}_{s, p} (v_{n,k,c}, \R^m) \Bigr)\\
 &= \Cr{pyxac} \Bigl(\mathcal{E}_{s,p} (v_n, \R^m) + \frac{k^{sp}}{(16 \sqrt{m})^{m - sp}}  \mathcal{E}_{s, p} (u_n^{\mathrm{opn}}, \R^m) \Bigr).
\end{split}
\]
On the other hand, we have
\[
 \mathcal{E}_{s, p} (v_{n,k}, \R^m) \ge \sum_{c \in P_c} \mathcal{E}_{s, p} \Bigl(v_{n,k,c}, \mathbb{B}^m_{1/(16k \sqrt{m})} (c)\Bigr)
 = \frac{k^{sp}}{(16 \sqrt{m})^{m - sp}}  \mathcal{E}_{s, p} (u_n^{\mathrm{opn}}, \mathbb{B}^m).
\]
Since \(\mathcal{E}_{s, p} (u_n^\mathrm{\mathrm{opn}}, \mathbb{B}^m) = \mathcal{E}_{s, p} (u_n, \mathbb{B}^m) > 0\), this implies that we can choose \(k=k(n) \in \N_*\) in such a way that 
\[
 \nu \le \mathcal{E}_{s, p} (v_{n,k}, \R^m)
 \le 2 \Cr{pyxac} \frac{k^{sp}}{\bigl(16 \sqrt{m}\bigr)^{m - sp}} \mathcal{E}_{s, p} (u_n^{\mathrm{opn}}, \R^m),
\]
where the constant \(\nu > 0\) will be fixed at the end of the proof.
By superadditivity of the energy \(\mathcal{G}\), we have 
\[
  \mathcal{G}\,(v_{n,k}, \mathbb{B}^m) \ge \sum_{c \in P_c} \mathcal{G}\,\Bigl(v_{n,k,c}, \mathbb{B}^m_{1/(16 k \sqrt{m})} (c)\Bigr)
  = \frac{k^{sp}}{\bigl(16 \sqrt{m}\bigr)^{m - sp}} \mathcal{G}\,(u_n^{\mathrm{opn}}, \mathbb{B}^m).
\]
We have therefore by \eqref{IneqResultOpening},
\[
 \nu \le \mathcal{E}_{s, p} (v_{n,k}, \R^m) \le  \mu^{-n} 2\,\Cr{pyxac}\, \Cr{unopncb}\, \mathcal{G}\,(v_{n, k}, \mathbb{B}^m).
\]
We define the map \(u_n^{\mathrm{clstr}} : \R^m \to \mathcal{N}\) for every \(x \in \R^m\) by
\[
 u_n^{\mathrm{clstr}} (x) = v_{n, k} (x/\lambda),
\]
where \(\lambda \in (0, 1]\) is chosen by scaling in such a way that
\begin{equation}
\label{ineqResultClustering}
 \nu = \mathcal{E}_{s, p} ( u_n^{\mathrm{clstr}}, \R^m) \le \mu^{-n} \Cl{itxdpx}\, \mathcal{G}\,(u_n^{\mathrm{clstr}}, \mathbb{B}^m),
\end{equation}
with \(\Cr{itxdpx} = 2\, \Cr{pyxac} \,\Cr{unopncb}\). By construction, one has also \(u_n^{\mathrm{clstr}}=b_*\) out of \(\mathbb{B}^m\).

\medskip
\noindent\emph{Step 4: Gluing the maps.} 
If \(Q\) denotes a cube of side-length \(1/\sqrt{m}\) contained in \(\mathbb{B}^m_{1/2}\), by dyadic decomposition the cube \(Q\) contains a family of cubes of sidelengths \((2^{-n-1}/\sqrt{m})_{n \in \N}\) and thus, if we set \(\rho_n = 2^{-n - 2}/\sqrt{m}\), there exists a sequence of points \((a_n)_{n \in \N}\) such that the balls \(\bigl(\Bar{B}_{\rho_n} (a_n)\bigr)_{n \in \N}\) are disjoint balls contained in the open ball \(\mathbb{B}^m_{1/2}\) and the sequence \((a_n)_{n \in \N}\) converges to \(0\). 
We define the map \(u : \R^m \to \mathcal{N}\) for each point \(x \in \R^m\) by 
\[
 u (x) = 
 \begin{cases}
   u_n^{\mathrm{clstr}} \bigl(\frac{x - a_n}{\rho_n}\bigr) & \text{if \(x \in \mathbb{B}^m_{\rho_n} (a_n)\)},\\
   b_* & \text{otherwise}.
 \end{cases}
\]
If we take \(\mu = 2^{m - sp}\), we have by countable superadditivity (which is a consequence of finite superadditivity by the monotone convergence theorem for series), translation-invariance and scaling of the energy \(\mathcal{G}\), in view of \eqref{ineqResultClustering},
\[
 \mathcal{G}\,(u, \mathbb{B}^m) \ge \sum_{n \in \N} \mathcal{G}\,\bigl(u, \mathbb{B}^m_{\rho_n} (a_n)\bigr)
 = \sum_{n \in \N} \rho_n^{m - sp} \mathcal{G}\,(u_n^\mathrm{clstr}, \mathbb{B}^m)
 \ge \sum_{n \in \N} \frac{\nu\mu^n}{\Cr{itxdpx} \bigl(2^{n+2}\sqrt{m}\bigr)^{m - sp}} = + \infty.
\]
On the other hand, by choosing \(\nu > 0\) small enough, we have by \cref{subadditive} and by \eqref{ineqResultClustering} again
\[
 \mathcal{E}_{s, p} (u, \R^m)
 \le 2^p \sum_{n \in \N} \rho_n^{m - sp} \mathcal{E}_{s, p} (u_n^\mathrm{clstr}, \mathbb{R}^m)
 \le 2^p \sum_{n \in \N} \frac{\nu}{\bigl(2^{n+2}\sqrt{m}\bigr)^{m - sp}} \le \varepsilon < +\infty,
\]
since \(sp < m\).
\end{proof}

We now consider the critical case \(s = m/p\) and \(s < m\) (the last inequality excludes the case \(s=p=m=1\)). In this case,  the Sobolev energy is scaling invariant and it is not always possible to obtain a Sobolev map with finite energy by gluing an infinite number of rescaled copies of the \(u_n\). We use the assumption that \((\mathcal{E}_{m/p,p}(u_n,\mathbb{B}^m))_{n \in \N}\) goes to \(0\) to bypass this limitation and the following classical result:

\begin{lem}\label{nullCapacity}
Let \(m \in \N_\ast\), \(s \in (0, 1]\) and \(p \in (1,+\infty)\). If \(sp = m\), then there exists a sequence of maps \((w_n)_{n \in \N}\) in 
\(C^\infty_c (\mathbb{R}^m, [0,1])\) such that for each \(n\in \N\),  \(w_n = 1\) on \(\mathbb{B}^m\) and 
\[
 \lim_{n \to \infty} \mathcal{E}_{s, p} (w_n,\R^m) = 0.
\]
\end{lem}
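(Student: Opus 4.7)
The strategy is to exhibit an explicit logarithmic cutoff and verify that its Gagliardo seminorm tends to zero. Set $R_n = e^n$ and let $\tilde w_n$ be the radial function that equals $1$ on $\mathbb{B}^m$, equals $\log(R_n/|x|)/\log R_n$ on the shell $\mathbb{B}^m_{R_n} \setminus \mathbb{B}^m$, and vanishes on $\R^m \setminus \mathbb{B}^m_{R_n}$. A standard mollification of $\tilde w_n$ yields $w_n \in C^\infty_c(\R^m, [0,1])$ with $w_n = 1$ on $\mathbb{B}^m$ and $\mathcal{E}_{s,p}(w_n, \R^m)$ controlled by $\mathcal{E}_{s,p}(\tilde w_n, \R^m)$, so it suffices to prove that $\mathcal{E}_{s,p}(\tilde w_n, \R^m) \to 0$ as $n \to \infty$.

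For $s = 1$ and $p = m$, since $|\nabla \tilde w_n(x)| = 1/(n|x|)$ on the shell and vanishes elsewhere, a direct computation in spherical coordinates gives
\[
 \int_{\R^m} |\nabla \tilde w_n|^m \diff x = \frac{\omega_{m-1}}{n^{m-1}},
\]
which tends to $0$ as $n \to \infty$.

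For $s \in (0,1)$ with $sp = m$, I split $\R^m = A \cup B \cup C$ where $A = \mathbb{B}^m$, $B = \mathbb{B}^m_{R_n}\setminus \mathbb{B}^m$, and $C = \R^m \setminus \mathbb{B}^m_{R_n}$. The double integrals over $A \times A$ and $C \times C$ vanish since $\tilde w_n$ is constant there, while the $A \times C$ contribution is dominated by $C R_n^{-sp} = C e^{-mn}$ and is negligible. For the mixed contributions $A \times B$ and $B \times C$, I would further split the shell into a near-boundary layer and a far part: near the interior boundary, the bound $|1 - \tilde w_n(y)| \le C(|y|-1)/n$ combined with the singular integral $\int_A |x-y|^{-(m+sp)}\diff x \le C(|y|-1)^{-sp}$ gives a contribution of order $n^{-p}$; away from the boundary, the estimate $\int_A |x-y|^{-(m+sp)}\diff x \le C|y|^{-(m+sp)}$ followed by the substitution $u = \log|y|$ and the critical relation $sp = m$ reduces the integral to $\int u^p e^{-mu}\diff u$, again of order $n^{-p}$. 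For the $B \times B$ contribution, I would decompose the shell into the unit-logarithmic annuli $A_k = \{e^{k-1} \le |x| < e^k\}$ for $k = 1, \ldots, n$; on each annulus, the local Lipschitz estimate $|\nabla \tilde w_n| \le 1/(n e^{k-1})$, combined with the volume and diameter of $A_k$ and the critical balance $sp = m$, produces a uniform-in-$k$ bound $\mathcal{E}_{s,p}(\tilde w_n, A_k) \le C/n^p$, and summing over the $n$ annuli yields $C/n^{p-1}$; cross-annulus interactions with $|k-j| \ge 2$ are controlled by the geometric separation and decay geometrically in $|k-j|$.

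The main technical point is the $B \times B$ estimate at adjacent annuli, where the integration variables can be arbitrarily close across the annular boundary; this is handled by treating each union $A_k \cup A_{k+1}$ as a single Lipschitz piece before applying the scale-invariant bound. Throughout, the critical relation $sp = m$ is what renders each dyadic-scale contribution dimensionless, which is precisely why logarithmic (rather than polynomial) corrections in the cutoff lead to vanishing energy.
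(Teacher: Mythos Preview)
Your construction is the same logarithmic cutoff as the paper's, and for \(s=1\) your direct computation matches theirs. The difference is in the fractional case \(s\in(0,1)\): the paper does not compute the Gagliardo double integral at all, but simply invokes the fractional Gagliardo--Nirenberg interpolation inequality
\[
\mathcal{E}_{s,p}(w_n,\R^m)\le C\,\mathcal{E}_{1,m}(w_n,\R^m)^{sp/m}\,\|w_n\|_{L^\infty}^{(1-s)p}=C\,\mathcal{E}_{1,m}(w_n,\R^m),
\]
using \(sp=m\) and \(\|w_n\|_{L^\infty}\le 1\), so that the fractional case reduces in one line to the already-computed case \(s=1\). Your route---splitting \(\R^m\times\R^m\) into \(A\times A\), \(A\times B\), \(B\times B\), etc., and then dyadically decomposing the shell---is correct and more self-contained, since it avoids quoting an interpolation inequality, but it is substantially longer. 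The scale-invariance you identify (each unit-logarithmic annulus contributing \(O(n^{-p})\) thanks to \(sp=m\)) is exactly the mechanism that the interpolation inequality encodes abstractly.

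One small technical point: mollifying your piecewise profile \(\tilde w_n\) at any positive scale will destroy the condition \(w_n=1\) on all of \(\mathbb{B}^m\), since \(\tilde w_n\) equals \(1\) only up to the boundary \(\partial\mathbb{B}^m\). You need either a preliminary dilation (define \(\tilde w_n=1\) on, say, \(\mathbb{B}^m_2\)) or, as the paper does, take a smooth one-variable profile \(w\in C^\infty(\R,[0,1])\) from the outset and set \(w_n(x)=w(\tfrac{1}{n}\log|x|)\), which is automatically smooth and equal to \(1\) on \(\mathbb{B}^m\) without any mollification.
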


The construction is classical and is related to the nonembedding of the critical Sobolev spaces into \(L^\infty\) and the null critical capacity of points. 
A direct way to construct such maps is to set \(w_n = w (\frac{1}{n}\ln \abs{x})\), where the function \(w \in C^\infty (\R, [0,1])\) satisfies \(w = 1\) on \((-\infty, 0]\) and \(w = 0\) on \([1, + \infty)\). When \(s = 1\) and \(p=m > 1\), the property follows by direct computation; when \(s \in (0, 1)\) the property follows from the fractional Sobolev embedding theorem.

\begin{proof}[Proof of \cref{theoremCounterexample} in the critical case \(sp=m\) and \(s < m\)]
\resetconstant
By passing if necessary to a subsequence, we can assume that there exists a sequence of measurable maps $u_n: \mathbb{B}^m \to \mathcal{N}$ such that 
\[
\lim_{n\to\infty}\mathcal{E}_{m/p,p}(u_n,\mathbb{B}^m)=0\quad\text{and}\quad 0< \mathcal{E}_{m/p,p} (u_n,\mathbb{B}^m)  \le 2^{-n} \mathcal{G}\,(u_n,\mathbb{B}^m) < + \infty.
\]
By Step 1 and Step 2 of the previous proof of \cref{theoremCounterexample} in the subcritical case \(sp < m\) (these steps do not use \(sp<m\)), we have existence of some maps \(u_n^\mathrm{\mathrm{opn}}:\R^m\to\mathcal{N}\) such that \(u_n^\mathrm{\mathrm{opn}}=u_n\) in \(\mathbb{B}^m\), \(u_n^\mathrm{\mathrm{opn}}=:b_n\in\mathcal{N}\) in \(\R^m\setminus\mathbb{B}^m_4\) and
\begin{equation}
\label{IneqResultOpeningCritical}
\mathcal{E}_{m/p,p}(u_n^\mathrm{\mathrm{opn}},\R^m)\le \Cl{fdjk} \mathcal{E}_{m/p,p}(u_n,\mathbb{B}^m)\le\Cr{fdjk}2^{-n}\mathcal{G}\,(u_n^\mathrm{\mathrm{opn}},\mathbb{B}^m).
\end{equation}

\medbreak
\noindent \emph{Step 3: Clustering the maps.}
Since the manifold $\mathcal{N}$ is connected, for each \(n \in \N\), there exists a Lipschitz-continuous curve \(\gamma_n:[0,1]\to\mathcal{N}\) such that \(\gamma_n(0)=b_\ast\) and \(\gamma_n(1)=b_n\).  We define for each \(\ell \in \N\), the mapping 
\(v_{n, \ell} =\gamma_n \compose w_\ell \colon \R^m\to\mathcal{N}\), where the map \(w_\ell\) is provided by \Cref{nullCapacity}.

By construction, we have \(v_{n, \ell}(x)=b_n\) on \(\mathbb{B}^m\) and \(v_{n, \ell}(x)=b_*\) on \(\R^m \setminus \mathbb{B}^m_{R_\ell}\) for some \(R_\ell \in (1, + \infty)\). We take \(k\in\N_*\) and pick a family of \(k\) disjoint balls \(\mathbb{B}^m_{\rho_1}(c_1),\dots,\mathbb{B}^m_{\rho_k}(c_k)\) in \(\mathbb{B}^m_{1/2}\), with \(c_1,\dots,c_k\in\mathbb{B}^m_{1/2}\) and \(\rho_1,\dots,\rho_k>0\). We define for each \(i\in\{1,\dots,k\}\) the map 
\[
 v_{n, k, i} (x) = u_n^{\mathrm{opn}} \Bigl(\frac{8}{\rho_i} (x - c_i) \Bigr).
\]
and we observe that \(v_{n, k, i} (x) = b_n\) if \(x \in \R^m \setminus \mathbb{B}^m_{\rho_i/2} (c_i)\).
We define now
\[
  v_{n, k,\ell}(x) = 
  \begin{cases}
     v_{n,k,i} (x) & \text{if \(i\in\{1,\dots,k\}\) and \(x \in \mathbb{B}^m_{\rho_i}(c_i)\)},\\
     v_{n, \ell} (x) & \text{otherwise}.
  \end{cases}
\]
On the one hand, by an application of \cref{subadditive} and by scaling invariance, we have 
\[
\begin{split}
 \mathcal{E}_{m/p, p} (v_{n, k, \ell}, \R^m)
 &\le 2^p \Bigl(\mathcal{E}_{m/p,p} (v_{n, \ell}, \R^m) + \sum_{i=1}^k \mathcal{E}_{m/p, p} (v_{n,k,i}, \R^m) \Bigr)\\
 &\le \Cl{kloi} \Bigl(\mathrm{Lip}(\gamma_n)^p\mathcal{E}_{m/p, p} (w_\ell, \R^m) + k \mathcal{E}_{m/p, p} (u_n^{\mathrm{opn}}, \R^m) \Bigr).
\end{split}
\]
On the other hand, by superadditivity of \(\mathcal{E}_{m/p,p}\) and by an application of \cref{lemmaBallToRN}, we have
\[
 \mathcal{E}_{m/p, p} (v_{n, k, \ell}, \R^m) \ge \sum_{i=1}^k \mathcal{E}_{m/p, p} \bigl(v_{n,k,i}, \mathbb{B}^m_{\rho_i} (c_i)\bigr)
 = k  \mathcal{E}_{m/p, p} (u_n^{\mathrm{opn}}, \mathbb{B}^m_8)\ge \Cl{fdsfds} k \mathcal{E}_{m/p, p} (u_n^{\mathrm{opn}}, \R^m).
\]
Since \(0<\mathcal{E}_{m/p, p} (u_n^\mathrm{\mathrm{opn}}, \R^m) \to 0\) as \(n\to\infty\) and \(\mathcal{E}_{m/p,p}(w_\ell,\R^m)\to 0\) as \(\ell\to\infty\), by passing to a subsequence if necessary, one can assume that there exist \(k=k(n) \in \N_*\) and \(\ell = \ell(n)\in\N_*\) such that
\[
\mathrm{Lip}(\gamma_n)^p\mathcal{E}_{m/p, p} (w_\ell, \R^m) \le 2^{-n}\nu \le k\mathcal{E}_{m/p,p}(u_n^{\mathrm{opn}},\R^m)\le 2^{-n+1}\nu,
\]
where \(\nu > 0\) is a constant to be fixed at the end of the proof.
In particular, by scaling invariance, the map \(u_n^\mathrm{clstr}:=v_{n,k(n),\ell (n)}(R_\ell \cdot)\) satisfies
\[
\mathcal{E}_{m/p,p}(u_n^\mathrm{clstr},\R^m) \le \Cr{kloi}2^{-n+2}\nu.
\]
By superadditivity and scaling invariance of the energy \(\mathcal{G}\), and by \eqref{IneqResultOpeningCritical}, we have furthermore
\begin{equation}
\label{ineqResultClusterCritical}
  \mathcal{G}\,(u_n^{\mathrm{clstr}}, \mathbb{B}^m) \ge k \mathcal{G}\,(u_n^{\mathrm{opn}}, \mathbb{B}^m)\ge k 2^n \Cr{fdjk}^{-1}\mathcal{E}_{m/p,p}(u_n^\mathrm{\mathrm{opn}},\R^m)\ge \nu \Cr{fdjk}^{-1}>0.
\end{equation}
By construction, we have also \(u_n^{\mathrm{clstr}}=b_*\) in \(\R^m \setminus \mathbb{B}^m\).

\medskip
\noindent\emph{Step 4: Gluing the maps.} 
There exist a sequence of points \((a_n)_{n \in \N}\subset\mathbb{B}^m_{1/2}\) and a sequence of radii \((\rho_n)_{n\in\N}\) in \((0,+\infty)\) such that the balls \(\bigl(\Bar{B}_{\rho_n} (a_n)\bigr)_{n \in \N}\) are disjoint balls contained in the open ball \(\mathbb{B}^m_{1/2}\) and the sequence \((a_n)_{n \in \N}\) converges to \(0\). 
We define the map \(u : \R^m \to \mathcal{N}\) for each \(x \in \R^m\) by 
\[
 u (x) = 
 \begin{cases}
   u_n^{\mathrm{clstr}} \bigl(\frac{x - a_n}{\rho_n}\bigr) & \text{if \(x \in \mathbb{B}^m_{\rho_n} (a_n)\)},\\
   b_* & \text{otherwise}.
 \end{cases}
\]
We have by superadditivity, translation invariance and scaling invariance of the energy \(\mathcal{G}\), in view of \eqref{ineqResultClusterCritical},
\[
 \mathcal{G}\,(u, \mathbb{B}^m) \ge \sum_{n \in \N} \mathcal{G}\,\bigl(u, \mathbb{B}^m_{\rho_n} (a_n)\bigr)
 = \sum_{n \in \N} \mathcal{G}\,(u_n^\mathrm{clstr}, \mathbb{B}^m)=+\infty.
\]
On the other hand, we have by \cref{subadditive}, if \(\nu > 0\) is small enough,
\[
 \mathcal{E}_{m/p, p} (u, \R^m)
 \le 2^p \sum_{n \in \N} \mathcal{E}_{m/p, p} (u_n^\mathrm{clstr}, \mathbb{R}^m)
 \le 2^p \Cr{kloi}\nu \sum_{n \in \N} 2^{-n+2} \le \varepsilon < + \infty.
\]
Since we have also \(u=b_*\) out of \(\mathbb{B}^m_{1/2}\), this concludes the proof in the critical case.
\end{proof}

We finally consider the case where \(sp>m\) or \(s = m = p = 1\).

\begin{proof}[Proof of \cref{theoremCounterexample} in the supercritical case \(sp>m\) or \(s = m = p = 1\)]
\resetconstant
By passing if necessary to a subsequence, we can assume that there exists a sequence of measurable maps $u_n: \mathbb{B}^m \to \mathcal{N}$ such that 
\[
\lim_{n\to\infty}\mathcal{E}_{s,p}(u_n,\mathbb{B}^m)=0\quad\text{and}\quad 0< \mathcal{E}_{s,p} (u_n,\mathbb{B}^m)  \le \mu^{-n} \mathcal{G}\,(u_n,\mathbb{B}^m) < + \infty,
\]
with \(\mu > 1\) that will be determined at the end of the proof.
By Step 1 and Step 2 of the proof of \cref{theoremCounterexample} in the subcritical case, we have existence of some maps \(u_n^\mathrm{\mathrm{opn}}:\R^m\to\mathcal{N}\) such that \(u_n^\mathrm{\mathrm{opn}}=u_n\) in \(\mathbb{B}^m\), \(u_n^\mathrm{\mathrm{opn}}=:b_n\in\mathcal{N}\) in \(\R^m\setminus\mathbb{B}^m_4\) and
\begin{equation}
\label{IneqResultOpeningSuperCritical}
\mathcal{E}_{s,p}(u_n^\mathrm{\mathrm{opn}},\R^m)\le \Cl{fdjke} \mathcal{E}_{s,p}(u_n,\mathbb{B}^m)\le\Cr{fdjke}\mu^{-n}\mathcal{G}\,(u_n^\mathrm{\mathrm{opn}},\mathbb{B}^m).
\end{equation}

\medbreak
\noindent \emph{Step 3: Fixing the boundary value.}
Since the manifold \(\mathcal{N}\) is compact, by passing if necessary to a subsequence, one can assume that the sequence \((b_n)_{n\in\N}\) converges to some point \(b_*\in\mathcal{N}\) as \(n\to\infty\). 
We consider a function \(w_* \in C^1_c (\R^n,[0,1])\) such that \(w_* = 0\) in \(\R^m \setminus \mathbb{B}^m_1\) and \(w_* = 1\) on \(\mathbb{B}^m_{1/2}\). 
Since $\mathcal{N}$ is connected, for each \(n \in \N\), there exists a Lipschitz-continuous curve \(\gamma_n : [0, 1] \to \R\) such that \(\gamma_n (0) = b_*\), \(\gamma_n (1) = b_n\) and \(\operatorname{Lip} (\gamma_n) \le 2 d_{\mathcal{N}} (b_n ,b_*)\). Then the map \(v_n=\gamma_n\compose w_\ast\) satisfies
\[
 \mathcal{E}_{s,p}(v_n,\R^m) \le \operatorname{Lip} (\gamma_n)^p \mathcal{E}_{s, p} (w_*) \le \C d_\mathcal{N}(b_n,b_*)^p .
\]
If \(sp > m\), for every \(\rho\in (0,\frac{1}{16})\), we define now
\[
  v_{n, \rho}(x) = 
  \begin{cases}
     u_n^{\mathrm{opn}} \bigl(\frac{x}{\rho}\bigr) & \text{if }x\in\mathbb{B}^m_{1/2},\\
     v_n (x) & \text{if }x\in\R^m\setminus\mathbb{B}^m_{1/2}.
  \end{cases}
\]
By an application of \cref{subadditive}, we have 
\[
\begin{split}
 \mathcal{E}_{s, p} (v_{n,\rho}, \R^m)
 &\le 2^p \Bigl(\mathcal{E}_{s,p} (v_n, \R^m) + \mathcal{E}_{s, p} (u_n^{\mathrm{opn}}(\cdot/\rho), \R^m) \Bigr)\\
 &\le \Cl{kloia} \Bigl(d_\mathcal{N}(b_n,b_*)^p + \frac{1}{\rho^{sp-m}} \mathcal{E}_{s, p} (u_n^{\mathrm{opn}}, \R^m) \Bigr).
\end{split}
\]
Since \(0<\mathcal{E}_{s, p} (u_n^\mathrm{\mathrm{opn}}, \R^m) \to 0\) and \(d_\mathcal{N}(b_n,b_*)\to 0\) as \(n\to\infty\), by passing to a subsequence if necessary, one can assume that there exists \(\rho=\rho(n) \in (0,\frac{1}{16})\) such that
\[
 \mathcal{E}_{s, p} (v_{n,\rho}, \R^m)
\le 2\Cr{kloia}\frac{1}{\rho^{sp-m}} \mathcal{E}_{s, p} (u_n^{\mathrm{opn}}, \R^m)\le \nu \mu^{-n},
\]
where \(\nu >0\) is a constant whose value will be fixed at the end of the proof.
Moreover, by scaling of the energy \(\mathcal{G}\) and by \eqref{IneqResultOpeningSuperCritical}, we have 
\[
\mathcal{G}\,(v_{n,\rho}, \mathbb{B}^m) \ge\mathcal{G}\,\big(u_n^\mathrm{opn}({\cdot}/{\rho}),\mathbb{B}^m_\rho\big)= \frac{1}{\rho^{sp-m}} \mathcal{G}\,(u_n^\mathrm{opn}, \mathbb{B}^m)\ge 
\frac{\Cr{fdjke}^{-1}\mu^n}{\rho^{sp-m}} \mathcal{E}_{s,p} (u_n^\mathrm{opn},\R^m).
\]
We have therefore
\[
 \mathcal{E}_{s, p} (v_{n,\rho}, \R^m)\le \nu \mu^{-n}\quad\text{and}\quad\mathcal{E}_{s, p} (v_{n,\rho}, \R^m) \le\Cr{unopncb}\mu^{-n} \mathcal{G}\,(v_{n, \rho}, \mathbb{B}^m).
\]
We define the map \(u_n^{b_*} : \R^m \to \mathcal{N}\) for every \(x \in \R^m\) by
\[
u_n^{b_*} (x) = v_{n,\rho} (x/\lambda),
\]
where \(\lambda \in (0, 1]\) is chosen by scaling in such a way that
\begin{equation}
\label{ineqResultClusteringSuper}
\nu \mu^{-n} = \mathcal{E}_{s, p} ( u_n^{b_*}, \R^m) \le  \Cl{itxdpxx}\,\mu^{-n} \mathcal{G}\,(u_n^{b_*}, \mathbb{B}^m),
\end{equation}
By construction, we have also \(u_n^{b_*}=b_*\) out of \(\mathbb{B}^m\).

If \(s = p = m = 1\), we proceed as in the proof of \cref{theoremCounterexample} when \(sp = m\), with \(w_*\) instead of \(w_\ell\), relying on the smallness of \(\operatorname{Lip} (\gamma_n)\) instead of the smallness of the energy \(\mathcal{E}_{s, p} (w_\ell)\).

\medskip
\noindent\emph{Step 4: Gluing the maps.} 
There exists a sequence of points \((a_n)_{n \in \N}\) such that the balls \(\bigl(\Bar{B}_{\rho_n} (a_n)\bigr)_{n \in \N}\) with \(\rho_n = 2^{-n - 2}/\sqrt{m}\) are disjoint balls contained in the open ball \(\mathbb{B}^m_{1/2}\) and the sequence of points \((a_n)_{n \in \N}\) converges to \(0\). 
The map \(u : \R^m \to \mathcal{N}\) is defined at each point \(x \in \R^m\) by 
\[
u (x) = 
\begin{cases}
u_n^{b_*} \bigl(\frac{x - a_n}{\rho_n}\bigr) & \text{if \(x \in \mathbb{B}^m_{\rho_n} (a_n)\)},\\
b_* & \text{otherwise}.
\end{cases}
\]
If we take \(\mu>2^{sp-m}\), we have by countable superadditivity, translation-invariance and scaling of the energy \(\mathcal{G}\), in view of \eqref{ineqResultClusteringSuper},
\[
\mathcal{G}\,(u, \mathbb{B}^m) \ge \sum_{n \in \N} \mathcal{G}\,\bigl(u, \mathbb{B}^m_{\rho_n} (a_n)\bigr)
= \sum_{n \in \N} \frac{\mathcal{G}\,\bigl(u_n^{b_*}, \mathbb{B}^m\bigr)}{\rho_n^{sp-m}}
\ge \sum_{n \in \N} \frac{\nu(2^{n+2}\sqrt{m})^{sp-m}}{\Cr{itxdpxx}} = + \infty.
\]
On the other hand, we have by \cref{subadditive} and by the inequality \eqref{ineqResultClusteringSuper} again
\[
\mathcal{E}_{s, p} (u, \R^m)
\le 2^p\sum_{n \in \N} \frac{\mathcal{E}_{s, p} (u_n^{b_*}, \mathbb{R}^m)}{\rho_n^{sp-m}}
= 2^p \nu \sum_{n \in \N} \frac{\bigl(2^{n+2}\sqrt{m}\bigr)^{sp-m}}{\mu^n} \le \varepsilon < + \infty,
\]
if \(\nu > 0\) is small enough, 
since \(sp \ge m\).
\end{proof}

\subsection{Density of counterexamples}
We use now \cref{theoremCounterexample} and ingredients of its proof to prove that when \(sp \le m\), Sobolev maps with infinite energy \(\mathcal{G}\) are dense. 

\begin{theorem}[Density of counterexamples]
\label{theoremDenseCounterexamples}
  Let \(m \in \N_*\), \(s \in (0, 1]\), \(p \in [1, +\infty)\), \(\mathcal{N}\) be a connected Riemannian manifold, and let \(\mathcal{G}\) be an  energy over $\R^m$ with state space $\mathcal{N}$. 
  Assume that for every measurable map \(u:\R^m\to\mathcal{N}\)
  \begin{enumerate}[(i)]
    \item (superadditivity) for all open sets \(A,B\subset\R^m\) with disjoint closure,
    \[
    \mathcal{G}\,(u,A\cup B)\geq \mathcal{G}\,(u,A)+\mathcal{G}\,(u,B),
    \]
    \item (scaling) for all $\lambda>0$, $h\in\R^m$ and any open set $A\subset\R^m$, one has 
    \[
    \mathcal{G}\,(u ,h+\lambda A)=\lambda^{m-sp} \mathcal{G}\,(u(h + \lambda \cdot),A).
    \]
  \end{enumerate}
    Assume furthermore that \(sp \le m\), \(s < m\) and that there exists a sequence \((u_n)_{n \in \N}\) of measurable maps \(u_n:\mathbb{B}^m \to \mathcal{N}\) such that for each \(n \in \N\), \(\mathcal{E}_{s, p} (u_n, \mathbb{B}^m) > 0\), \(\mathcal{G}\,(u_n,\mathbb{B}^m)<+\infty\), and 
    \[
    \lim_{n \to \infty} \frac{\mathcal{G}\,(u_n, \mathbb{B}^m)}{\mathcal{E}_{s, p} (u_n, \mathbb{B}^m)} = + \infty,\quad\text{with}\quad \lim_{n\to\infty}\mathcal{E}_{s,p}(u_n,\mathbb{B}^m)=0 \quad\text{if }sp=m.
    \]
Then, for every \(\varepsilon > 0\) and if the map \(v : \mathbb{B}^m \to \mathcal{N}\) is measurable and \(\mathcal{E}_{s, p} (v,\mathbb{B}^m) < + \infty\), there exists a measurable map \(u : \mathbb{B}^m \to \mathcal{N}\) such that 
\begin{enumerate}[(i)]
 \item \(u = v\) on \(\mathbb{B}^m \setminus \mathbb{B}^m_\varepsilon\),
 \item \(\mathcal{E}_{s, p} (u,\mathbb{B}^m) \le \mathcal{E}_{s, p} (v,\mathbb{B}^m) + \varepsilon\),
 \item \(\mathcal{G}\,(u,\mathbb{B}^m) = + \infty\).
\end{enumerate}
\end{theorem}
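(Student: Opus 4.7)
The strategy is to graft a rescaled copy of the ``bad'' map provided by \cref{theoremCounterexample} into $v$ on a tiny ball inside $\mathbb{B}^m_\varepsilon$, using the opening \cref{openingLemma} to first render $v$ exactly constant on an intermediate sub-ball so the graft is compatible with the surrounding values of $v$.

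\emph{Construction.} Fix any $x_0 \in \mathbb{B}^m_{\varepsilon/2}$ and $\rho > 0$ small enough that $\mathbb{B}^m_{2\rho}(x_0) \subset \mathbb{B}^m_\varepsilon$. Apply \cref{openingLemma} to $v$ with a fixed Lipschitz map $\varphi$ that is the identity outside $\mathbb{B}^m_\rho(x_0)$ and constantly equals $x_0$ on $\mathbb{B}^m_{\rho/2}(x_0)$: this yields a shift $a \in \mathbb{B}^m_{\eta\rho}$ and a modified map $\hat v := v \compose \varphi_a$ which coincides with $v$ outside $\mathbb{B}^m_\rho(x_0+a) \subset \mathbb{B}^m_\varepsilon$, equals the constant $b_* := v(x_0+a)$ on $\mathbb{B}^m_{\rho/2}(x_0+a)$, and satisfies $\mathcal{E}_{s,p}(\hat v, \mathbb{B}^m_\rho(x_0+a)) \leq C\,\mathcal{E}_{s,p}(v, \mathbb{B}^m_{\lambda\rho}(x_0))$, the right-hand side vanishing as $\rho \to 0^+$ by absolute continuity of the Gagliardo integral. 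With $b_*$ now determined, apply \cref{theoremCounterexample} (permitted since $sp \le m$ and $s < m$ put us in the subcritical or critical case, so we may prescribe the boundary value $b_*$) with a small $\delta > 0$ to produce $w : \R^m \to \mathcal{N}$ such that $\mathcal{E}_{s,p}(w, \R^m) \le \delta$, $w \equiv b_*$ on an open neighborhood of $\R^m \setminus \mathbb{B}^m_{1/2}$ (a feature of the explicit construction in the proof of \cref{theoremCounterexample}), and $\mathcal{G}\,(w, \mathbb{B}^m) = +\infty$. For $\sigma \in (0, \rho/2]$ set $\tilde w(x) := w\bigl((x - (x_0+a))/\sigma\bigr)$, so that $\tilde w \equiv b_*$ outside $\mathbb{B}^m_{\sigma/2}(x_0+a)$, and define
\[ u(x) := \begin{cases} \tilde w(x) & \text{if } x \in \mathbb{B}^m_{\sigma/2}(x_0+a),\\ \hat v(x) & \text{otherwise.}\end{cases} \]

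\emph{Verification.} Property (i), $u = v$ on $\mathbb{B}^m \setminus \mathbb{B}^m_\varepsilon$, is immediate from $\hat v = v$ outside $\mathbb{B}^m_\rho(x_0+a) \subset \mathbb{B}^m_\varepsilon$. For (iii): on $\mathbb{B}^m_\sigma(x_0+a)$ one actually has $u = \tilde w$, because both $\tilde w$ and $\hat v$ take the value $b_*$ on the annulus $\mathbb{B}^m_\sigma(x_0+a) \setminus \mathbb{B}^m_{\sigma/2}(x_0+a)$; by monotonicity and the scaling of $\mathcal{G}$, $\mathcal{G}\,(u, \mathbb{B}^m) \ge \mathcal{G}\,(\tilde w, \mathbb{B}^m_\sigma(x_0+a)) = \sigma^{m-sp}\mathcal{G}\,(w, \mathbb{B}^m) = +\infty$. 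The energy bound (ii) is the main technical point. Applying \cref{lemmaBallToRN} to $u$ (with inner ball $\mathbb{B}^m_\sigma(x_0+a)$ and small buffer parameter $\eta_1$), and separately to $\hat v$ (with inner ball $\mathbb{B}^m_\rho(x_0+a)$ and small buffer parameter $\eta_2$, using $\hat v = v$ outside together with the opening estimate), leads to a schematic inequality of the form
\[ \mathcal{E}_{s,p}(u, \mathbb{B}^m) \le c_1\,\sigma^{m-sp}\delta + \bigl(1 + O(\eta_1^m + \eta_2^m)\bigr)\,\mathcal{E}_{s,p}(v, \mathbb{B}^m) + C\,\mathcal{E}_{s,p}(v, \mathbb{B}^m_{\lambda\rho}(x_0)). \]
Since $\mathcal{E}_{s,p}(v, \mathbb{B}^m)$ is a fixed finite number, we first choose $\eta_1, \eta_2$ small enough (in terms of $\mathcal{E}_{s,p}(v, \mathbb{B}^m)$ and $\varepsilon$) that the multiplicative defect contributes at most $\varepsilon/3$, then $\rho$ small enough that the opening correction is $\le \varepsilon/3$, and finally $\sigma$ small when $sp < m$ (respectively $\delta$ small when $sp = m$) to make the inner term $\le \varepsilon/3$. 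The main technical obstacle is precisely this delicate parameter calibration, which is required to secure an \emph{additive} (rather than multiplicative) bound on $\mathcal{E}_{s,p}(v, \mathbb{B}^m)$ and reflects the fact that each application of \cref{lemmaBallToRN} introduces a coefficient strictly larger than $1$ in the fractional range $s < 1$.
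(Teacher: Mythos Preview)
Your approach is essentially the same as the paper's: both proceed by (1) opening $v$ near a point so that it becomes an exact constant on a tiny ball, (2) invoking \cref{theoremCounterexample} with that constant as the prescribed boundary value $b_*$ to produce a map $w$ of arbitrarily small Sobolev energy and infinite $\mathcal{G}$--energy, (3) grafting a rescaled copy of $w$ into the opened region, and (4) exploiting the asymmetry of the two constants in \cref{lemmaBallToRN}---the coefficient $1+\tfrac{C\eta^m}{1-\eta}$ on the \emph{outer} piece tends to $1$ as $\eta\to 0$---to obtain an additive rather than multiplicative perturbation of $\mathcal{E}_{s,p}(v,\mathbb{B}^m)$.

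One point deserves tightening. In your application of \cref{lemmaBallToRN} to $\hat v$ with ``inner ball $\mathbb{B}^m_\rho(x_0+a)$ and small buffer parameter $\eta_2$'', the outer ball in the lemma has radius $\rho/\eta_2$, and the resulting inner term is $(1+C)\,\mathcal{E}_{s,p}(\hat v,\mathbb{B}^m_{\rho/\eta_2}(x_0+a))$. This ball strictly contains the opening region, and on the annulus $\mathbb{B}^m_{\rho/\eta_2}\setminus\mathbb{B}^m_\rho$ one has $\hat v=v$; hence this inner term contributes not only the opening correction $C\,\mathcal{E}_{s,p}(v,\mathbb{B}^m_{\lambda\rho}(x_0))$ but also a term of order $\mathcal{E}_{s,p}(v,\mathbb{B}^m_{\rho/\eta_2}(x_0+a))$, which is absent from your schematic inequality. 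The paper makes this intermediate scale explicit (it is denoted $\sigma$ there, with $\eta=\delta/\sigma$), fixes it first so that $\mathcal{E}_{s,p}(v,\mathbb{B}^m_\sigma)$ is already below $\varepsilon/3$, and only then sends the opening scale to zero. In your parametrisation the missing term equally vanishes as $\rho\to 0$ with $\eta_2$ held fixed, so your calibration ``first $\eta_1,\eta_2$, then $\rho$, then $\sigma$ or $\delta$'' remains valid once this term is inserted; the argument is correct, just slightly underspecified.
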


\Cref{theoremDenseCounterexamples} implies that there exists a sequence \((v_n)_{n \in \N}\) such that \(v_n = v\) on \(\mathbb{B}^m \setminus \mathbb{B}^m_{1/n}\) and \(\limsup_{n \to \infty} \mathcal{E}_{s, p} (v_n) \le \mathcal{E}_{s, p} (v)\), which implies in particular that the sequence \((v_n)_{n \in \N}\) converges strongly to \(v\) in \(W^{s, p}(\mathbb{B}^m, \mathcal{N})\).

\begin{proof}[Proof of \cref{theoremDenseCounterexamples}]
\resetconstant
We proceed in two steps: we first open the map \(v\) by making it constant in a neighbourhood of \(0\) and then we insert a singularity of \cref{theoremCounterexample}.

\medbreak

\noindent\emph{Step 1: Opening.}
We choose a Lipschitz-continuous function \(\varphi : \R^m \to \R^m\) such that \(\varphi (x) = 0\) if \(\abs{x} \le 3/10\) and \(\varphi (x) = x\) if \(\abs{x} \ge 1/2\).
Given \(\delta \in (0,1)\), we define \(\varphi_\delta : \mathbb{B}^m_{9\delta/10} \to \mathbb{B}^m_{9\delta /10}\) by \(\varphi_\delta (x) = \delta \varphi (x/\delta)\).
We apply \cref{openingLemma} with \(\rho = 4\delta/5\), \(\eta = 1/8\) and \(\lambda = 5/4\), and we obtain the existence of a point \(a \in \mathbb{B}^m_{\delta/10}\) such that
\begin{equation}
\label{est_open}
 \mathcal{E}_{s, p} \bigl(v \compose (\varphi_\delta (\cdot - a) + a), \mathbb{B}^m_{4\delta/5}\bigr) \le \C \mathrm{Lip}(\varphi)^{sp}\, \mathcal{E}_{s, p} (v, \mathbb{B}^m_\delta),
 \end{equation}
since \(\mathrm{Lip}(\varphi_\delta) = \mathrm{Lip} (\varphi)\).
We observe that for \(x\in \mathbb{B}^m_{4\delta/5}\),
\[
\varphi_\delta (x - a)+a=
\begin{cases}
x&\text{if \(|x|\geq 3\delta/5\) \ \ (since then \(|x-a|\geq 3\delta/5-|a|\geq \delta/2\))},\\
a&\text{if \(|x|\leq \delta/5\)  \ \ (since then \(|x-a|\leq \delta/5+|a|\leq 3\delta/10\)).}
\end{cases}
\]

\medskip
\noindent
\emph{Step 2: Inserting the singularity.}
Since \(sp \le m\) and \(s < m\), we apply \cref{theoremCounterexample} in the critical or subcritical case, with \(b_* = v(a)\) and we obtain a map \(w : \R^m \to \mathcal{N}\) such that 
\(w = b_*\) on \(\R^m \setminus \mathbb{B}^m\), \(\mathcal{G}\,(w, \mathbb{B}^m) = + \infty\) and \(\mathcal{E}_{s, p} (w, \mathbb{R}^m) \le \xi\), where \(\xi>0\) will be fixed at the end of the proof.
We define the map \(u : \mathbb{B}^m \to \mathcal{N}\) for \(x \in \mathbb{B}^m\) by
\[
 u (x)
 = \begin{cases}
     v(x) & \text{if \(\abs{x} \ge 4\delta/5\)},\\
     v (\varphi_\delta (x - a) + a) & \text{if \(\delta/5 \le \abs{x} \le 4\delta/5\)},\\
     w (10x/\delta) & \text{if \(\abs{x} \le \delta/5\)}.
   \end{cases}
\]
By a double application of \cref{lemmaBallToRN}, we have for every \(\sigma \in (\delta,1)\),
\[
\mathcal{E}_{s,p}(u,\mathbb{B}^m_\sigma)\le \C \bigl(\mathcal{E}_{s,p}(u,\mathbb{B}^m_\sigma\setminus\mathbb{B}^m_{3\delta/5})+\mathcal{E}_{s,p}(u,\mathbb{B}^m_{4\delta/5}\setminus\mathbb{B}^m_{\delta/10})+\mathcal{E}_{s,p}(u,\mathbb{B}^m_{\delta/5})\bigr).
\]
Since \(u=v\) on \(\mathbb{B}^m_\sigma\setminus\mathbb{B}^m_{3\delta/5}\), \(u=v(\varphi_\delta(\cdot-a)+a)\) on \(\mathbb{B}^m_{4\delta/5}\setminus\mathbb{B}^m_{\delta/10}\) and \(u=w(10x/\delta)\) on \(\mathbb{B}^m_{\delta/5}\), and since \(\sigma>\delta\), this implies by \eqref{est_open}
\[
 \mathcal{E}_{s,p} (u, \mathbb{B}^m_\sigma)\le \Cl{gluinter} \bigl( \mathcal{E}_{s,p} (v , \mathbb{B}^m_\sigma) + \mathcal{E}_{s,p} (w, \R^m)\bigr)\le  \Cr{gluinter} \bigl( \mathcal{E}_{s,p} (v , \mathbb{B}^m_\sigma) + \xi\bigr).
\]
We assume now that \(\sigma \ge 2 \delta\), and we apply \cref{lemmaBallToRN} with \(\rho=\sigma\) and \(\eta=\delta/\sigma\leq 1/2\). We obtain
\[
\begin{split}
 \mathcal{E}_{s,p} (u, \mathbb{B}^m) 
& \le  \C \mathcal{E}_{s,p} (u,\mathbb{B}^m_\sigma)+(1+\C\eta^m) \mathcal{E}_{s,p} (u,\mathbb{B}^m\setminus\mathbb{B}^m_\delta)
 \\
& \le 
\mathcal{E}_{s,p} (v, \mathbb{B}^m) 
+\Cl{jrdn}\Bigl(
 \Big(\frac{\delta}{\sigma}\Big)^m \mathcal{E}_{s,p} (v, \mathbb{B}^m) + \mathcal{E}_{s,p} (v , \mathbb{B}^m_\sigma) +  \xi
\Bigr).
\end{split}
\]
In order to obtain the conclusion, we first fix \(\sigma \in (0, 1)\) such that 
\[
 \mathcal{E}_{s,p} (v , \mathbb{B}^m_\sigma) \le \frac{\varepsilon}{3\Cr{jrdn}},
\]
next \(\delta \in (0, 1)\) such that \(\delta \le \frac{\sigma}{2}\), \(\delta \le \varepsilon\) and 
\[
 \Big(\frac{\delta}{\sigma}\Big)^m \mathcal{E}_{s,p} (v, \mathbb{B}^m) \le \frac{\varepsilon}{3\Cr{jrdn}},
 \]
this allows us then to construct the points \(a\in\mathbb{B}^m_{\delta/10}\) and \(b_*=v(a)\) and the obstruction \(w\) with \(\xi = \frac{\varepsilon}{3\Cr{jrdn}}\).
\end{proof}

\section{Concrete uniform boundedness principles}
\label{concreteUBP}

\subsection{Extension of traces}
We apply \cref{theoremCounterexample} to prove a uniform boundedness principle for the extension problem (\cref{extensionthm}).

\begin{proof}[Proof of \cref{extensionthm}]
Let \(m \in \N_\ast\), \(s \in (0, 1]\) and \(p \in [1, +\infty)\) and assume by contradiction that the linear bound does not hold. 
Then by \cref{theoremCounterexample} with \(\mathcal{G} = \mathcal{E}_{r,q}^{\mathrm{ext}}\) there exist a map \(u \in W^{s, p} (\mathbb{R}^m, \mathcal{N})\) and \(b_\ast\in\mathcal{N}\) such that \(\mathcal{E}_{r, q}^{\mathrm{ext}} (u, \mathbb{B}^m) = + \infty\) and \(u = b_*\) in \(\R^m \setminus \mathbb{B}^m\). 
If \(\mathcal{M} = \R^m\) we have a contradiction. Otherwise, \(\mathcal{M}\) is a compact Riemannian manifold, for which we consider a local chart \(\Phi : \mathbb{B}^m_2 \to \mathcal{M}\). We define the map \(\Tilde{u} : \mathcal{M} \to \mathcal{N} \) by 
\[
  \Tilde{u} (x)
  =
  \begin{cases}
    u \bigl(\Phi^{-1} (x)\bigr) & \text{if \(x \in \Phi (\mathbb{B}^m_1)\)},\\
    b_* & \text{otherwise}.
  \end{cases}
\]
Since \(\mathcal{M}\) is compact, we conclude by a counterpart of the gluing technique of \cref{lemmaBallToRN}. 
\end{proof}	

\begin{theorem}
\label{theoremExtensionDense}%
Let \(s, r \in (0, 1]\), \(p, q \in [1, +\infty)\), \(m \in \N_*\), \(\mathcal{M}\) be a Euclidean space or a compact Riemannian manifold of dimension \(m\) and \(\mathcal{N}\) be a connected Riemannian manifold.
If \(sp = rq - 1 \le m\) and \(s < m\) and if every map in a nonempty open subset of \(W^{s, p} (\mathcal{M}, \mathcal{N})\) is the trace of some map in \(W^{r, q} (\mathcal{M} \times (0, + \infty), \mathcal{N})\), then there exists a constant $C>0$ such that for each measurable function 
$u: \mathbb{B}^m \to \mathcal{N}$ such that, if either \(sp <m\) or \(\mathcal{E}_{s, p} (u, \mathbb{B}^m) \le 1/C\), then 
\[
  \mathcal{E}^{\mathrm{ext}}_{r ,q}(u,\mathbb{B}^m)\leq C \, \mathcal{E}_{s,p}(u,\mathbb{B}^m).
\]
\end{theorem}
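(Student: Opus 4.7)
The plan is to argue by contradiction: assuming no such constant \(C\) exists, I will construct a map inside the given open subset \(U \subset W^{s,p}(\mathcal{M}, \mathcal{N})\) with infinite extension energy, contradicting the hypothesis that every map in \(U\) is the trace of some map in \(W^{r,q}(\mathcal{M} \times (0, +\infty), \mathcal{N})\). The construction rests on two ingredients: a single ``singular'' map \(u^\ast\) with finite Sobolev energy and infinite extension energy, produced by \cref{theoremCounterexample}, and the insertion of a rescaled copy of \(u^\ast\) into a chosen \(u_0 \in U\), producing a neighbour of \(u_0\) in \(W^{s,p}\) whose extension energy is infinite.

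First I verify that \(\mathcal{G} := \mathcal{E}^{\mathrm{ext}}_{r,q}\) meets the structural hypotheses of \cref{theoremCounterexample} and \cref{theoremDenseCounterexamples}. Superadditivity is immediate: any extension over \(A \cup B\) with \(\Bar A \cap \Bar B = \emptyset\) restricts to extensions over \(A\) and over \(B\). The scaling \(\mathcal{G}(u, h + \lambda A) = \lambda^{m - sp} \mathcal{G}(u(h + \lambda \cdot), A)\) follows by the change of variables \((x, t) \mapsto (h + \lambda x, \lambda t)\) in the Gagliardo integral defining \(\mathcal{E}_{r,q}\), together with the identity \(rq = sp + 1\). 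Next, from the negation of the desired linear bound on \(\mathbb{B}^m\) I extract a sequence \((u_n) \subset W^{s,p}(\mathbb{B}^m, \mathcal{N})\) with \(\mathcal{E}_{s,p}(u_n) > 0\) and \(\mathcal{G}(u_n)/\mathcal{E}_{s,p}(u_n) \to +\infty\) (with \(\mathcal{E}_{s,p}(u_n) \to 0\) forced in the critical case \(sp = m\)). If some \(\mathcal{G}(u_n) = +\infty\), the map \(u^\ast := u_n\) already has the required properties; otherwise \cref{theoremCounterexample} applies and yields such a \(u^\ast\) supported in \(\mathbb{B}^m_{1/2}\) with \(\mathcal{E}_{s,p}(u^\ast, \R^m)\) arbitrarily small.

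The core step is then the insertion. Fix \(u_0 \in U\) and work inside a small ball---directly in \(\mathcal{M}\) if \(\mathcal{M} = \R^m\), otherwise transported through a local chart \(\Phi : \mathbb{B}^m_2 \to \mathcal{M}\) as in the proof of \cref{extensionthm}. Following the two steps of the proof of \cref{theoremDenseCounterexamples}, I apply \cref{openingLemma} to make \(u_0\) constant equal to \(b_\ast := u_0(a)\) on a sub-ball \(\mathbb{B}^m_{\delta/5}\) while controlling its energy by \(\mathcal{E}_{s,p}(u_0, \mathbb{B}^m_\delta)\), and then paste in a rescaled copy of \(u^\ast\) chosen with boundary value \(b_\ast\). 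The gluing along the buffer zone (\cref{lemmaBallToRN}) yields a map \(\tilde u_0\) that coincides with \(u_0\) outside the chart and satisfies \(\mathcal{E}_{s,p}(\tilde u_0) \le \mathcal{E}_{s,p}(u_0) + \varepsilon\). By superadditivity of \(\mathcal{G}\), \(\mathcal{G}(\tilde u_0) = +\infty\). Taking \(\varepsilon > 0\) small enough so that \(\tilde u_0\) still lies in the open set \(U\) yields the desired contradiction.

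The main obstacle is the critical case \(sp = m\) (well defined here because the standing assumption \(s < m\) rules out \(s = p = m = 1\)): the Sobolev energy of \(u^\ast\) is then scale-invariant, so shrinking the inserted copy no longer shrinks its contribution to \(\mathcal{E}_{s,p}\). As in the critical case of the proof of \cref{theoremCounterexample}, I would instead rely on \cref{nullCapacity} and use logarithmic cut-offs to interpolate between the constant value \(b_\ast\) and the inserted singularity, guaranteeing smallness of the perturbation in \(W^{s,p}(\mathcal{M}, \mathcal{N})\) while preserving \(\mathcal{G}(\tilde u_0) = +\infty\).
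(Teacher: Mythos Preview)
Your proposal is correct and follows essentially the same route as the paper: argue by contradiction, then invoke the density construction of \cref{theoremDenseCounterexamples} (which you unpack into its two steps, opening plus insertion of the singularity from \cref{theoremCounterexample}) to produce, arbitrarily close to a chosen \(u_0\) in the open set \(U\), a map with infinite extension energy, and transport this back to \(\mathcal{M}\) via a chart.

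Two small remarks. First, your closing paragraph on the critical case is misplaced: the scale-invariance obstacle is entirely absorbed inside \cref{theoremCounterexample}, which already hands you \(u^\ast\) with \(\mathcal{E}_{s,p}(u^\ast,\R^m)\le\xi\) for any prescribed \(\xi>0\); the insertion step of \cref{theoremDenseCounterexamples} then proceeds identically in the subcritical and critical cases, with no further use of \cref{nullCapacity}. Second, the sentence ``taking \(\varepsilon>0\) small enough so that \(\tilde u_0\) still lies in \(U\)'' needs a word of justification: the bound \(\mathcal{E}_{s,p}(\tilde u_0)\le\mathcal{E}_{s,p}(u_0)+\varepsilon\) alone does not place \(\tilde u_0\) near \(u_0\) in \(W^{s,p}\). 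The paper handles this by passing to a sequence \(\tilde u_0^{(n)}\) agreeing with \(u_0\) outside a shrinking ball and with \(\limsup_n \mathcal{E}_{s,p}(\tilde u_0^{(n)})\le \mathcal{E}_{s,p}(u_0)\); together these force strong convergence in \(W^{s,p}(\mathcal{M},\mathcal{N})\), so eventually \(\tilde u_0^{(n)}\in U\).
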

\begin{proof}[Proof of \cref{theoremExtensionDense}]
\resetconstant
We assume by contradiction that the estimate does not hold.
Let \(v \in W^{s, p} (\mathcal{M}, \mathcal{N})\) and let \(\Phi : \mathbb{B}^m_2 \to \mathcal{M}\) be a local chart.
We apply \Cref{theoremDenseCounterexamples} to the map \(v \compose \Phi\) with the energy \(\mathcal{E}^{\mathrm{ext}}_{r,q}\), and we obtain a sequence of maps \((u_n)_{n \in \N_*}\) such that 
\(u_n = v \compose \Phi\) in \(\mathbb{B}^m \setminus \mathbb{B}^m_{1/n}\), \(\mathcal{E}^{\mathrm{ext}}_{r, q} (u_n, \mathbb{B}^m)= + \infty\) and \(\limsup_{n \to \infty} \mathcal{E}^{\mathrm{ext}}_{r, q} (u_n, \mathbb{B}^m)\le \mathcal{E}^{\mathrm{ext}}_{r, q} (v \compose \Phi, \mathbb{B}^m)\). We define now \(v_n:\mathcal{M}\to\mathcal{N}\) by
\[
 v_n(x)
 =\begin{cases}
    u_n \bigl(\Phi^{-1} (x)\bigr) & \text{if \(x \in \Phi (\mathbb{B}^m)\)},\\
    v(x) & \text{otherwise}.
  \end{cases}
\]
Since \(v_n = v\) in \(\Phi (\mathbb{B}^m \setminus \mathbb{B}^m_{1/n})\), we deduce by a counterpart of \cref{lemmaBallToRN} that
\[
\limsup_{n \to \infty} \mathcal{E}_{s, p} (v_n, \mathcal{M}) \le \mathcal{E}_{s, p} (v, \mathcal{M}) 
\]
and thus the sequence \((v_n)_{n \in \N_\ast}\) converges strongly to \(v\) in \(W^{s, p} (\mathcal{M}, \mathcal{N})\) but for each \(n \in \mathbb{N}\), \(\mathcal{E}^{\mathrm{ext}}_{r, q} (v_n, \mathcal{M})= + \infty\), which contradicts the assumption.
\end{proof}

In view of the estimate \eqref{eqBethuelGrowth} of Bethuel \cite{Bethuel2014Extension}*{(1.36)}, \cref{theoremExtensionDense} implies that if \(\mathcal{N}\) is compact, if $sp=p-1<\mathrm{dim}(\mathcal{M})$ and if either \(\pi_1 (\mathcal{N})\) is infinite or \(\pi_{j}(\mathcal{N}) \not \simeq \{0\}\) for some \(j \in \{2, \dotsc, \floor{p} - 1\}\), then the set of maps in \(W^{1-1/p,p} (\mathcal{M}, \mathcal{N})\) that are not traces of maps in \(W^{1, p}(\mathcal{M} \times \R_+, \mathcal{N})\) is dense.

\subsection{Weak-bounded approximation}
The proof of \cref{approximationthm} is similar to the proof of \cref{extensionthm}, with \(\mathcal{G} = \mathcal{E}^{\mathrm{rel}}_{r, q}\).
The counterpart of \cref{theoremExtensionDense} is 

\begin{theorem}
\label{theoremWkBddApproxDense}
Let $s, r\in (0, 1]$, $p, q \in [1, +\infty)$, $m \in \N_*$, \(\mathcal{M}\) be a Euclidean space or a compact Riemannian manifold of dimension \(m\) and let \(\mathcal{N}\) be a connected Riemannian manifold.
If $sp = rq <m$ and if every map in a nonempty open set of \(W^{s, p} (\mathcal{M},\mathcal{N})\) has a weak-bounded approximation in \(W^{r, q} (\mathcal{M},\mathcal{N})\), then there exists a constant $C>0$ such that for each measurable function $u:\mathbb{B}^m \to \mathcal{N}$, one has
\[
  \mathcal{E}^{\mathrm{rel}}_{r,q}(u,\mathbb{B}^m)
  \leq C \,\mathcal{E}_{s,p}(u,\mathbb{B}^m).
\] 
\end{theorem}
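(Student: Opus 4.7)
The plan is to transcribe the proof of \cref{theoremExtensionDense} essentially verbatim, using the energy \(\mathcal{G} = \mathcal{E}^{\mathrm{rel}}_{r, q}\) instead of \(\mathcal{E}^{\mathrm{ext}}_{r, q}\). The scheme is a proof by contradiction: I would assume that the asserted linear estimate on \(\mathbb{B}^m\) fails, apply \cref{theoremDenseCounterexamples} to manufacture a localized singularity of the relaxed energy near a map \(v\) taken from the given nonempty open set \(U \subset W^{s, p} (\mathcal{M}, \mathcal{N})\), transport the construction back to \(\mathcal{M}\) through a local chart, and then derive a contradiction with the openness of \(U\).

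The first task is to verify that \(\mathcal{E}^{\mathrm{rel}}_{r, q}\) fits the axiomatic framework of \cref{theoremDenseCounterexamples}. Monotonicity under restriction, superadditivity on pairs of open sets with disjoint closures, and the scaling identity \(\mathcal{E}^{\mathrm{rel}}_{r, q}(u, h + \lambda A) = \lambda^{m - rq} \mathcal{E}^{\mathrm{rel}}_{r, q}(u(h + \lambda \cdot), A)\) are each inherited from the corresponding properties of the Gagliardo energy \(\mathcal{E}_{r, q}\), with the help of a diagonal extraction from almost-everywhere convergent smooth approximating sequences. Because \(sp = rq < m\) and \(s \le 1 < m\), we sit squarely in the subcritical regime of \cref{theoremDenseCounterexamples}, and the sequence of counterexamples it requires (with finite \(\mathcal{G}\) and divergent ratio) is supplied, after a mild case analysis, by the contradiction hypothesis combined with the finiteness of the relaxed energy on \(U\).

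Next, pick any \(v \in U\) and any local chart \(\Phi : \mathbb{B}^m_2 \to \mathcal{M}\). Apply \cref{theoremDenseCounterexamples} to \(v \compose \Phi\) (whose \(\mathcal{E}_{s, p}\) energy on \(\mathbb{B}^m\) is finite by bi-Lipschitz change of variable) with \(\varepsilon_n = 1/n\). This produces, for each \(n \in \N_\ast\), a measurable \(u_n : \mathbb{B}^m \to \mathcal{N}\) such that \(u_n = v \compose \Phi\) on \(\mathbb{B}^m \setminus \mathbb{B}^m_{1/n}\), \(\mathcal{E}_{s, p}(u_n, \mathbb{B}^m) \le \mathcal{E}_{s, p}(v \compose \Phi, \mathbb{B}^m) + 1/n\), and \(\mathcal{E}^{\mathrm{rel}}_{r, q}(u_n, \mathbb{B}^m) = +\infty\). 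Define \(v_n : \mathcal{M} \to \mathcal{N}\) by \(v_n = u_n \compose \Phi^{-1}\) on \(\Phi (\mathbb{B}^m)\) and \(v_n = v\) on \(\mathcal{M} \setminus \Phi(\mathbb{B}^m)\). A manifold counterpart of \cref{lemmaBallToRN} in a collar of \(\partial \Phi (\mathbb{B}^m)\) then gives \(\limsup_{n \to \infty} \mathcal{E}_{s, p} (v_n, \mathcal{M}) \le \mathcal{E}_{s, p}(v, \mathcal{M})\), and since \(v_n\) agrees with \(v\) outside the shrinking set \(\Phi (\mathbb{B}^m_{1/n})\), one even gets strong convergence \(v_n \to v\) in \(W^{s, p} (\mathcal{M}, \mathcal{N})\).

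The contradiction now comes from two incompatible pieces: on the one hand, the openness of \(U\) forces \(v_n \in U\) for all large \(n\), hence \(\mathcal{E}^{\mathrm{rel}}_{r, q}(v_n, \mathcal{M}) < +\infty\); on the other hand, restriction monotonicity of the relaxed energy together with its bi-Lipschitz invariance under \(\Phi\) yields \(\mathcal{E}^{\mathrm{rel}}_{r, q}(v_n, \mathcal{M}) \ge \mathcal{E}^{\mathrm{rel}}_{r, q}(v_n, \Phi (\mathbb{B}^m)) \ge c\, \mathcal{E}^{\mathrm{rel}}_{r, q}(u_n, \mathbb{B}^m) = +\infty\). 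The main obstacle I anticipate is not the geometric construction but the preliminary bookkeeping around \(\mathcal{E}^{\mathrm{rel}}_{r, q}\): carefully verifying its restriction monotonicity, superadditivity on disjoint opens, correct scaling, and good behaviour under bi-Lipschitz coordinates. Each of these reduces to a diagonal extraction from recovery sequences, but since the relaxed energy is defined through an infimum over almost-everywhere convergent smooth approximations, these manipulations need to be spelled out with some care.
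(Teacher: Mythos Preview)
Your proposal is correct and follows essentially the same approach as the paper, which does not give a separate proof but indicates that \cref{theoremWkBddApproxDense} is the counterpart of \cref{theoremExtensionDense} with \(\mathcal{G} = \mathcal{E}^{\mathrm{rel}}_{r,q}\) in place of \(\mathcal{E}^{\mathrm{ext}}_{r,q}\). Your added care in spelling out why \(\mathcal{E}^{\mathrm{rel}}_{r,q}\) satisfies monotonicity, superadditivity, scaling, and bi-Lipschitz stability, and your remark about the mild case split needed to feed \cref{theoremDenseCounterexamples} a sequence with \emph{finite} \(\mathcal{G}\), are details the paper leaves implicit.
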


In view of the failure of a linear bound for the weak-bounded approximation problem in the space \(W^{1, 3} (\mathbb{B}^m, \mathbb{S}^2)\) when \(m \ge 4\) \cite{Bethuel2014Weak}, we obtain as a consequence of \cref{theoremWkBddApproxDense} the density of mappings that have no weak-bounded approximation in \(W^{1, 3} (\mathcal{M}, \mathbb{S}^2 )\) when \(\dim \mathcal{M}\ge 4\).

\subsection{Lifting problem}
\Cref{liftingthm} is also proved as  \cref{extensionthm}, with \(\mathcal{G} = \mathcal{E}^{\mathrm{lift}}_{r, q}\).
The counterpart of \cref{theoremExtensionDense,theoremWkBddApproxDense} is 

\begin{theorem}
\label{theoremLiftingDense}%
Let $s,r\in (0, 1]$, $p,q\in [1, +\infty)$, $m \in \N_*$, \(\mathcal{M}\) be a Euclidean space or a compact Riemannian manifold of dimension \(m\), \(\mathcal{N}\) and \(\mathcal{F}\) be Riemannian manifold manifolds with \(\mathcal{N}\) connected and \(\pi : \mathcal{F} \to \mathcal{N}\). If \(rq = sp \le m\), \(s < m\) and if for every map \(u\) in a nonempty open subset of $W^{s,p}(\mathcal{M},\mathcal{N})$ there exists \(\varphi \in W^{r, q} (\mathcal{M}, \mathcal{F})\) such that \(\pi \compose \varphi = u\), then there exists a constant $C>0$ such that for each measurable function \(u:\mathbb{B}^m\to \mathcal{N}\), if either \(sp < m\) or  \(\mathcal{E}_{s,p}(u,\mathbb{B}^m) \le 1/C\),
\[
  \mathcal{E}^\mathrm{lift}_{r,q}(u,\mathbb{B}^m)\leq C\, \mathcal{E}_{s,p}(u,\mathbb{B}^m).
\]
\end{theorem}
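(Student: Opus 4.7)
The plan is to parallel the argument used for \cref{theoremExtensionDense}, with the choice $\mathcal{G}=\mathcal{E}^{\mathrm{lift}}_{r,q}$. First I would verify that this $\mathcal{G}$ is an energy on $\R^m$ with state space $\mathcal{N}$ that meets the hypotheses of \cref{theoremDenseCounterexamples}. Monotonicity under restriction is immediate, since restricting any measurable lift $\varphi$ of $u$ to a smaller open set still yields a lift there. Superadditivity follows from the corresponding superadditivity of $\mathcal{E}_{r,q}$ on open sets with disjoint closure: for any lift $\varphi$ of $u$ on $A\cup B$,
\[
 \mathcal{E}_{r,q}(\varphi,A\cup B)\ge \mathcal{E}_{r,q}(\varphi,A)+\mathcal{E}_{r,q}(\varphi,B)\ge \mathcal{E}^{\mathrm{lift}}_{r,q}(u,A)+\mathcal{E}^{\mathrm{lift}}_{r,q}(u,B),
\]
and taking the infimum over $\varphi$ yields superadditivity of $\mathcal{G}$. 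The scaling with exponent $m-rq=m-sp$ is inherited from that of $\mathcal{E}_{r,q}$ through the bijection $\varphi\mapsto \varphi(h+\lambda\,\cdot)$ between liftings of $u$ on $h+\lambda A$ and of $u(h+\lambda\,\cdot)$ on $A$.

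Next I would argue by contradiction. If the estimate fails, then there exists a sequence $(u_n)_{n\in\N}$ in $W^{s,p}(\mathbb{B}^m,\mathcal{N})$ with $\mathcal{E}_{s,p}(u_n,\mathbb{B}^m)>0$, $\mathcal{G}\,(u_n,\mathbb{B}^m)<+\infty$, and $\mathcal{G}\,(u_n,\mathbb{B}^m)/\mathcal{E}_{s,p}(u_n,\mathbb{B}^m)\to+\infty$, together with $\mathcal{E}_{s,p}(u_n,\mathbb{B}^m)\to 0$ in the critical case $sp=m$ (the latter coming precisely from the smallness hypothesis in the statement). Fix any $v$ in the given nonempty open subset $U\subset W^{s,p}(\mathcal{M},\mathcal{N})$ and a local chart $\Phi:\mathbb{B}^m_2\to\mathcal{M}$. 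I would then apply \cref{theoremDenseCounterexamples} to the map $v\compose\Phi\in W^{s,p}(\mathbb{B}^m,\mathcal{N})$ with $\varepsilon=1/n$, producing for each $n\in\N_*$ a measurable map $u_n^*:\mathbb{B}^m\to\mathcal{N}$ agreeing with $v\compose\Phi$ on $\mathbb{B}^m\setminus\mathbb{B}^m_{1/n}$, with $\mathcal{E}_{s,p}(u_n^*,\mathbb{B}^m)\le \mathcal{E}_{s,p}(v\compose\Phi,\mathbb{B}^m)+1/n$, and with $\mathcal{E}^{\mathrm{lift}}_{r,q}(u_n^*,\mathbb{B}^m)=+\infty$.

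Finally I would transport the $u_n^*$ back to $\mathcal{M}$ by setting $v_n(x)=u_n^*(\Phi^{-1}(x))$ on $\Phi(\mathbb{B}^m)$ and $v_n(x)=v(x)$ otherwise. A counterpart of \cref{lemmaBallToRN} adapted to the compact manifold $\mathcal{M}$, used as in the proof of \cref{theoremExtensionDense}, gives
\[
 \limsup_{n\to\infty}\mathcal{E}_{s,p}(v_n,\mathcal{M})\le \mathcal{E}_{s,p}(v,\mathcal{M}),
\]
so that $v_n\to v$ strongly in $W^{s,p}(\mathcal{M},\mathcal{N})$. On the other hand, a lift $\psi\in W^{r,q}(\mathcal{M},\mathcal{F})$ of $v_n$ would pull back via $\Phi$ to a lift of $u_n^*$ with finite $\mathcal{E}_{r,q}$ energy on $\mathbb{B}^m$, contradicting $\mathcal{E}^{\mathrm{lift}}_{r,q}(u_n^*,\mathbb{B}^m)=+\infty$; hence $\mathcal{E}^{\mathrm{lift}}_{r,q}(v_n,\mathcal{M})=+\infty$ for every $n$. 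Since $U$ is open and $v_n\to v\in U$, the maps $v_n$ eventually lie in $U$, contradicting the assumed liftability on $U$.

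The main obstacle is the bookkeeping of the chart transfer: one must simultaneously control the fractional gluing in $W^{s,p}$ on $\mathcal{M}$ (which requires a manifold version of \cref{lemmaBallToRN} handling the nonlocal double integral over the intrinsic distance $d_\mathcal{M}$) and verify that the liftability is indeed destroyed globally, not only inside $\Phi(\mathbb{B}^m)$. Both points are routine but use in an essential way the monotonicity of $\mathcal{E}^{\mathrm{lift}}_{r,q}$ under restriction and the bi-Lipschitz character of $\Phi$ on a slightly larger ball. Everything else is an application of the machinery already set up in \cref{proofUBP} and in the proof of \cref{theoremExtensionDense}.
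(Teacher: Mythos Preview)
Your proposal is correct and follows essentially the same route as the paper: the paper explicitly says that \cref{theoremLiftingDense} is the counterpart of \cref{theoremExtensionDense} obtained by taking $\mathcal{G}=\mathcal{E}^{\mathrm{lift}}_{r,q}$, and your argument reproduces the proof of \cref{theoremExtensionDense} with exactly this substitution. Your added verification of monotonicity, superadditivity and scaling for $\mathcal{E}^{\mathrm{lift}}_{r,q}$, and your observation that a global lift $\psi$ of $v_n$ would restrict (via $\Phi$) to a lift of $u_n^*$ on $\mathbb{B}^m$, are precisely the points the paper leaves implicit.
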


In view of the estimate \eqref{nonestimateMMM} of Merlet \cite{Merlet2006}*{Theorem 1.1} and of Mironescu and Molnar \cite{MironescuMolnar2015}*{Proposition 5.7}, \cref{theoremLiftingDense} implies that maps in \(W^{s, p}(\mathcal{M}, \mathbb{S}^1)\) having no lifting in \(W^{s, p}(\mathcal{M}, \R)\) are dense when \(s \in (0, 1)\) and \(1 < sp < \dim \mathcal{M}\).

When \(sp > 2\) and \(\mathcal{M}\) is simply--connected, mappings in \(W^{s, p}(\mathcal{M}, \mathbb{S}^1)\)
still have a lifting in the larger space \(W^{s, p}(\mathcal{M}, \R)+W^{1,sp}(\mathcal{M}, \R)\) \citelist{\cite{BourgainBrezisMironescu2002}*{Theorem 3}\cite{BourgainBrezis2003}*{Theorem 4}\cite{BourgainBrezisMironescu2004}*{Theorem 3}
\cite{BourgainBrezisMironescu2005}*{Open Problem 1}
\cite{mironescu2007sobolev}*{Theorem 3.2}
\cite{nguyen2008inequalities}*{Theorem 2}
\cite{Mironescu2008CRAS}*{Theorem 1}
\cite{Mironescu2008}\cite{Mironescu2010}
}.
By considering the energy 
\begin{multline*}
 \mathcal{G} (u, A) =\inf\bigl\{\mathcal{E}_{s,p}(\varphi_1,A)+\mathcal{E}_{1,sp}(\varphi_2,A)\;:\; u=\pi\compose (\varphi_1+\varphi_2)\\
 \varphi_1\in W^{s,p}(A,\R),\, \varphi_2\in W^{1,sp}(A,\R)\bigr\},
\end{multline*}
with \(\pi(t):=(\cos t,\sin t)\) for all \(t\in\R\), we recover the known linear estimates in this setting:

\begin{theorem}\label{weaklifting}
Let \(s\in (0,1]\), \(p\in [1,+\infty)\), \(m\in\N_*\), and let \(\mathcal{M}\) be a \(m\)-dimensional Riemannian manifold such that either \(\mathcal{M}\) is compact or \(\mathcal{M}=\R^m\). If for every map \(u\in W^{s,p}(\mathcal{M},\mathbb{S}^1)\) there exists a lifting \(\varphi\in W^{s,p}(\mathcal{M},\R)+W^{1,sp}(\mathcal{M},\R)\) such that \(u = \pi \compose\varphi\) almost everywhere in \(\mathcal{M}\), then there exists a constant \(C>0\) such that for every measurable function \(u:\mathbb{B}^m\to\mathbb{S}^1\), if either \(sp < m\) or \(\mathcal{E}^{s, p} (u, \mathbb{B}^m) \le 1/C\), there exist \(\varphi_1 \in W^{s,p}(\mathbb{B}^m,\R)\) and \(\varphi_2 \in W^{1,sp}(\mathbb{B}^m,\R)\) such that 
\(u = \pi \compose (\varphi_1 + \varphi_2)\) and 
\[
\mathcal{E}_{s,p}(\varphi_1,\mathbb{B}^m)+\mathcal{E}_{1,sp}(\varphi_2,\mathbb{B}^m)\le C\mathcal{E}_{s,p}(u,\mathbb{B}^m).
\]
\end{theorem}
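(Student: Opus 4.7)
The plan is to apply the nonlinear uniform boundedness principle (Theorem \ref{uniform}) to the energy $\mathcal{G}$ defined in the statement, with target \(\mathcal{N} = \mathbb{S}^1\). Since \(\mathbb{S}^1\) is a compact connected Riemannian manifold, the target hypothesis of Theorem \ref{uniform} holds in every regime. Once one has the conclusion \(\mathcal{G}(u,\mathbb{B}^m) \le C\,\mathcal{E}_{s,p}(u,\mathbb{B}^m)\), picking an almost-minimizing pair \((\varphi_1,\varphi_2)\) in the infimum defining \(\mathcal{G}\) yields the inequality of the theorem (after replacing \(C\) by, say, \(2C\)).

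To invoke Theorem \ref{uniform}, I would verify its four hypotheses for this \(\mathcal{G}\). Monotonicity under restriction follows immediately from monotonicity of \(\mathcal{E}_{s,p}\) and \(\mathcal{E}_{1,sp}\): any admissible decomposition on \(B\supset A\) restricts to an admissible decomposition on \(A\). Superadditivity is equally direct: for disjoint \(A,B\), one has \(\mathcal{E}_{s,p}(\psi,A\cup B)\ge \mathcal{E}_{s,p}(\psi,A)+\mathcal{E}_{s,p}(\psi,B)\) by dropping the cross Gagliardo integrals, and \(\mathcal{E}_{1,sp}(\psi,A\cup B)= \mathcal{E}_{1,sp}(\psi,A)+\mathcal{E}_{1,sp}(\psi,B)\) by additivity of the Lebesgue integral, so the same holds for \(\mathcal{G}\). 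The scaling assumption also reduces to the well-known scalings of the two Gagliardo/Dirichlet energies: under \(x = h+\lambda y\), both \(\mathcal{E}_{s,p}(\varphi_1,h+\lambda A)\) and \(\mathcal{E}_{1,sp}(\varphi_2,h+\lambda A)\) rescale with the common factor \(\lambda^{m-sp}\) (observing that \(1\cdot (sp)=sp\)), so their infimum does too. Finally, vanishing of \(\mathcal{E}_{s,p}(u,\mathbb{B}^m)\) forces \(u\) to be an essentially constant map \(e^{i\theta_0}\), for which \(\varphi_1\equiv\theta_0\), \(\varphi_2\equiv 0\) is admissible and gives \(\mathcal{G}(u,\mathbb{B}^m)=0\).

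The remaining hypothesis, namely that \(\mathcal{E}_{s,p}(u,\mathbb{B}^m)<+\infty\) implies \(\mathcal{G}(u,\mathbb{B}^m)<+\infty\), is the only place the assumption of the theorem enters, and I expect it to be the main technical step. The strategy is to extend a given \(u:\mathbb{B}^m\to\mathbb{S}^1\) of finite Gagliardo energy to a map \(\tilde u:\mathcal{M}\to\mathbb{S}^1\) with \(\mathcal{E}_{s,p}(\tilde u,\mathcal{M})<+\infty\), apply the hypothesis to obtain a global decomposition \(\tilde u=\pi\compose(\tilde\varphi_1+\tilde\varphi_2)\) with \(\tilde\varphi_1\in W^{s,p}(\mathcal{M},\R)\) and \(\tilde\varphi_2\in W^{1,sp}(\mathcal{M},\R)\), and then restrict back to \(\mathbb{B}^m\). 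When \(\mathcal{M}\) is compact one uses Lemma \ref{extensionLemma} together with an opening construction (as in Step 2 of the proof of Theorem \ref{theoremCounterexample}) to render the extension constant near the boundary of a slightly larger ball, and then transports this into \(\mathcal{M}\) through a local chart, setting \(\tilde u\) equal to that constant on the complement. When \(\mathcal{M}=\R^m\) the same opening-and-constant construction produces \(\tilde u\) on all of \(\R^m\) directly. With all four hypotheses of Theorem \ref{uniform} verified, the conclusion follows, and the extraction of \((\varphi_1,\varphi_2)\) via an almost-minimizer completes the proof.
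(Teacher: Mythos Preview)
Your proposal is correct and follows essentially the same route as the paper: apply the general uniform boundedness principle with the energy \(\mathcal{G}\) displayed just before the theorem, check the structural properties of \(\mathcal{G}\), and pass between \(\mathbb{B}^m\) and \(\mathcal{M}\) via extension/opening and a local chart. The only organisational difference is that the paper's explicit applications (e.g.\ the proof of \cref{extensionthm}) argue through the contrapositive \cref{theoremCounterexample}---which already hands you a map equal to a constant outside \(\mathbb{B}^m_{1/2}\), so the transport to \(\mathcal{M}\) is immediate---whereas you verify the finiteness hypothesis of \cref{uniform} directly by redoing the extension/opening construction; the two are logically equivalent.
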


\subsection{Superposition operators}
The proof of \cref{superposition} is obtained by considering the energy \(\mathcal{G} (u, A) = \mathcal{E}_{r, q } (f \compose u)\) in \cref{theoremCounterexample}.
When \(sp \le m\) and \(s < m\), it is possible to prove the uniform bound on the assumption that the superposition operator acts on a nonempty open set.

\begin{theorem}[Acting condition]\label{actingCondition}
Let \(s,r\in (0,1]\), \(p,q\in [1,+\infty)\) and \(m\in\N_*\) with \(rq=sp\), let \(\mathcal{M}\) be an \(m\)-dimensional Riemannian manifold which is either \(\R^m\) or compact, \(\mathcal{N}\) be a connected Riemannian manifold which is compact if \(sp > m\) or if \(s = p = m=1\), \(\mathcal{F}\) be a Riemannian manifold and let \(f:\mathcal{N}\to\mathcal{F}\) be a Borel-measurable map. If for every \(u \in W^{s,p} (\mathcal{M}, \mathcal{N})\), \(f \compose u \in W^{r, q} (\mathcal{M}, \mathcal{F})\),
then  there exists a constant \(C \in [0, + \infty)\), such that for every \(x, y \in \mathcal{N}\), if either \(sp < m\) or \( d_{\mathcal{N}} (x, y) \le 1/C\), then 
 \begin{equation}
 \label{itHolder}
  d_{\mathcal{F}} \big(f (x), f (y)\big) \le C d_{\mathcal{N}} (x, y)^{p/q}.
 \end{equation}
Moreover, when \(sp \le m\) and \(s < m\), the above statements are satisfied if there is a nonempty open set \(\mathcal{U} \subset W^{s, p} (\mathcal{M}, \mathcal{N})\) such that for every \(u \in \mathcal{U}\) one has \(f \compose u \in W^{r, q} (\mathcal{M}, \mathcal{F})\).
\end{theorem}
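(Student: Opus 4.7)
The plan is to apply the general uniform boundedness principle of \Cref{uniform} to the functional
\[
\mathcal{G}(u, A) := \mathcal{E}_{r,q}(f \compose u, A).
\]
Monotonicity of \(\mathcal{G}\) under restriction and superadditivity on open sets with disjoint closure are immediate from the (non-negative) double-integral form of \(\mathcal{E}_{r,q}\) (and from additivity of \(\int \abs{D v}^q\) on disjoint open sets when \(r=1\)), while the scaling exponent of \(\mathcal{G}\) is \(m - rq = m - sp\), matching the hypothesis of \Cref{uniform} thanks to \(rq = sp\). The finiteness axiom follows by extending any measurable \(u : \mathbb{B}^m \to \mathcal{N}\) with \(\mathcal{E}_{s,p}(u, \mathbb{B}^m) < +\infty\) by a constant value (through a chart when \(\mathcal{M}\) is compact, or by translation when \(\mathcal{M} = \R^m\)) to an element of \(W^{s,p}(\mathcal{M}, \mathcal{N})\), invoking the acting hypothesis, and restricting; vanishing when \(\mathcal{E}_{s,p}(u, \mathbb{B}^m) = 0\) is trivial since \(u\) is then constant. \Cref{uniform} then yields a constant \(C\) such that \(\mathcal{E}_{r,q}(f \compose u, \mathbb{B}^m) \le C\, \mathcal{E}_{s,p}(u, \mathbb{B}^m)\) for every measurable \(u : \mathbb{B}^m \to \mathcal{N}\), subject to the usual smallness restriction when \(sp \ge m\).

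To extract the pointwise H\"older estimate \eqref{itHolder}, I would fix once and for all two disjoint closed sets \(E, F \subset \mathbb{B}^m\) of positive measure with \(\mathrm{dist}(E, F) > 0\) and a Lipschitz function \(\phi : \mathbb{B}^m \to [0,1]\) with \(\phi = 0\) on \(E\) and \(\phi = 1\) on \(F\). For \(x, y \in \mathcal{N}\) (arbitrary when \(sp < m\); with \(d_\mathcal{N}(x,y) \le 1/C\) when \(sp \ge m\), small enough to be joined by a length-minimising geodesic), I choose a Lipschitz path \(\gamma : [0,1] \to \mathcal{N}\) from \(x\) to \(y\) with \(\mathrm{Lip}(\gamma) \le 2\, d_\mathcal{N}(x,y)\), and set \(u := \gamma \compose \phi\). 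Then \(u \equiv x\) on \(E\), \(u \equiv y\) on \(F\), and a direct computation (using \(\abs{Du} \le \mathrm{Lip}(\gamma)\abs{D\phi}\) when \(s = 1\), or by bounding the integrand of the Gagliardo energy by \(\mathrm{Lip}(u)^p \abs{\xi - \eta}^{p(1 - s) - m}\) when \(s < 1\)) gives \(\mathcal{E}_{s,p}(u, \mathbb{B}^m) \le C_1\, d_\mathcal{N}(x,y)^p\).

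For the matching lower bound on \(\mathcal{E}_{r,q}(f \compose u, \mathbb{B}^m)\), when \(r < 1\) it suffices to restrict the defining Gagliardo integral to \(E \times F\), giving
\[
\mathcal{E}_{r,q}(f \compose u, \mathbb{B}^m) \ge 2 \int_E \int_F \frac{d_\mathcal{F}(f(x), f(y))^q}{\abs{\xi - \eta}^{m + rq}} \diff \xi \diff \eta = C_2\, d_\mathcal{F}(f(x), f(y))^q,
\]
with \(C_2 > 0\) finite thanks to \(\mathrm{dist}(E, F) > 0\). When \(r = 1\), I embed \(\mathcal{F} \hookrightarrow \R^\nu\) isometrically and apply the Poincar\'e inequality to \(v := f \compose u - f(x) \in W^{1,q}(\mathbb{B}^m, \R^\nu)\), which vanishes on the positive-measure set \(E\), to obtain \(\abs{F}^{1/q}\, \abs{f(x) - f(y)} \le C_3\, \mathcal{E}_{1,q}(f \compose u, \mathbb{B}^m)^{1/q}\); combined with the local Lipschitz equivalence of the intrinsic distance \(d_\mathcal{F}\) and the ambient Euclidean distance on \(\mathcal{F}\), this furnishes a lower bound of the same form in terms of \(d_\mathcal{F}(f(x), f(y))^q\) whenever \(d_\mathcal{N}(x, y)\) is small, with the passage to arbitrary \(x, y\) in the \(sp < m\) case obtained by chaining the local estimate along a quasi-geodesic in \(\mathcal{N}\). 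Combining these lower bounds with the linear bound of the first paragraph gives \(d_\mathcal{F}(f(x), f(y))^q \le C_4\, d_\mathcal{N}(x, y)^p\), that is, \eqref{itHolder}. The ``moreover'' part under the weaker open-set hypothesis follows from the density-of-counterexamples \Cref{theoremDenseCounterexamples} applied to \(\mathcal{G}\): if \eqref{itHolder} failed, any nonempty open subset of \(W^{s,p}(\mathcal{M}, \mathcal{N})\) would contain a map whose composition with \(f\) has infinite \(\mathcal{E}_{r,q}\)-energy, contradicting the assumption. The main obstacle in this plan is the \(r = 1\) case, where the Euclidean Poincar\'e inequality controls only \(\abs{f(x) - f(y)}\), so one has to carefully exploit the local equivalence \(d_\mathcal{F} \asymp |\cdot|\) together with the geodesic structure of \(\mathcal{N}\) to upgrade this to a genuine intrinsic H\"older estimate.
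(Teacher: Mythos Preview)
Your overall strategy---applying \Cref{uniform} to \(\mathcal{G}(u,A)=\mathcal{E}_{r,q}(f\compose u,A)\) and then testing against maps \(u=\gamma\compose\phi\) that are constant on two disjoint sets---is exactly the paper's approach, and your treatment of the case \(r<1\) via the Gagliardo double integral restricted to \(E\times F\) matches the paper. The ``moreover'' part via \Cref{theoremDenseCounterexamples} is also what the paper does.

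The gap is in the case \(r=1\). Your Poincar\'e route only controls the \emph{extrinsic} distance \(|f(x)-f(y)|\) in \(\R^\nu\), and the ``local Lipschitz equivalence'' you invoke to pass to \(d_\mathcal{F}(f(x),f(y))\) is not uniform: for a general (noncompact) manifold \(\mathcal{F}\) isometrically embedded in \(\R^\nu\) there is no constant \(c>0\) with \(d_\mathcal{F}(a,b)\le c\,|a-b|\) for all \(a,b\in\mathcal{F}\), and since \(f\) is merely Borel-measurable you have no a priori reason for \(f(x)\) and \(f(y)\) to lie in a single chart where such an equivalence holds. Chaining along a geodesic in \(\mathcal{N}\) does not rescue this: that would require a local H\"older bound for \(d_\mathcal{F}\) with a constant independent of the basepoint, which is precisely what is missing.

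The paper bypasses this cleanly. Take \(E,F\) to be the balls \(\mathbb{B}^m_{1/4}(a_\pm)\) with \(a_\pm=(\pm\frac12,0,\dotsc,0)\), joined by the tube \([-\frac12,\frac12]\times\mathbb{B}^{m-1}_{1/4}\). By H\"older, \(\mathcal{E}_{1,q}(f\compose u,\mathbb{B}^m)\ge \mathcal{L}^m(\mathbb{B}^m)^{1-q}\mathcal{E}_{1,1}(f\compose u,\mathbb{B}^m)^q\). On almost every segment \(t\mapsto(t,y')\) with \(y'\in\mathbb{B}^{m-1}_{1/4}\), the map \(f\compose u\) is absolutely continuous with values in \(\mathcal{F}\); hence \(\int_{-1/2}^{1/2}|\partial_1(f\compose u)(t,y')|\,\diff t\) is the length (computed in \(\R^\nu\), which by isometry of the embedding equals the length in \(\mathcal{F}\)) of a curve in \(\mathcal{F}\) joining \(f(x)\) to \(f(y)\), and is therefore \(\ge d_\mathcal{F}(f(x),f(y))\). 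Integrating over \(y'\) yields the intrinsic lower bound directly, without any extrinsic-to-intrinsic conversion.
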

In particular, if the superposition operator given by \(f\) maps \(W^{s,p} (\mathcal{M}, \mathcal{N})\) into \(W^{r,q}(\mathcal{M}, \mathcal{F})\) and if \(p>q\), then \(f\) is constant on \(\mathcal{N}\).

When \(sp\ge m\), if \(p=q\) or if \(\mathcal{N}\) is compact (which is our assumption when \(sp>m\)), it is easy to see that one can avoid the smallness condition on \(d_\mathcal{N}(x,y)\) in the conclusion of \Cref{actingCondition}. 

When \(sp = m\) and \(\mathcal{N}\) is not compact, the H\"older continuity condition of \cref{actingCondition} implies that for every \(x, y \in \mathcal{N}\), 
\begin{equation}
\label{itHolderCritical}
 d_{\mathcal{F}} (f (x), f (y)) 
 \le C \bigl(d_{\mathcal{N}} (x, y)^{p/q} + d_{\mathcal{N}} (x, y)\bigr).
\end{equation}
If \(p=q\), the H\"older continuity condition of \cref{actingCondition} implies that \(f\) is Lipschitz-continuous; this condition is well-known to be necessary \citelist{\cite{MarcusMizel1979}\cite{Igari1965}\cite{Bourdaud1993}\cite{BourdaudSickel2011}\cite{Allaoui2009}}.

When \(s < 1\), the condition \eqref{itHolder} can be observed to be sufficient by a direct computation with the Gagliardo energy and relying, when \(sp = m\), on \eqref{itHolderCritical} and the fractional Gagliardo--Nirenberg interpolation inequality.

When \(r < s = 1\), the exact characterization of the superposition operators acting from the space \(W^{1,p} (\mathcal{M}, \mathcal{N})\) to  \(W^{r, q} (\mathcal{M}, \mathcal{F})\) remains open; when \(\mathcal{N} = \mathcal{F} = \R\) and \(f (t) = \abs{t}^{p/q}\), it is known that \(f\) maps \(W^{1,p} (\mathcal{M}, \R)\) to  \(W^{r, q} (\mathcal{M}, \R)\) \cite{Mironescu2015}.

\begin{proof}[Proof of \cref{actingCondition}]
\resetconstant
By \cref{superposition}, there exists a constant \(\Cl{supabstthm} >0\) such that for every measurable function 
\(u:\mathbb{B}^m\to\mathcal{N}\), 
\[
\mathcal{E}_{r,q}(f\compose u,\mathbb{B}^m)\le \Cr{supabstthm}\mathcal{E}_{s,p}(u,\mathbb{B}^m).
\]
We fix two points \(a_{\pm} = (\pm \frac{1}{2}, 0, \dotsc, 0)\) and we choose a function \(w \in C^\infty_c (\mathbb{B}^m,[-1,1])\) such that \(w = \pm 1\) on \(\mathbb{B}^m_{1/4} (a_\pm)\).
For \(x, y \in \mathcal{N}\), we consider a Lipschitz-continuous curve \(\gamma_{x, y} :[-1,1]\to\mathcal{N}\) satisfying \(\gamma_{x, y}(-1)=x\), \(\gamma_{x, y} (1) = y\) and \(\mathrm{Lip} \, \gamma_{x, y} \le d_{\mathcal{N}} (x, y)\).
Such a curve exists since \(\mathcal{N}\) is path--connected and a continuous path can always be reparametrized by arc--length. Since \(\gamma_{x, y}\) is Lipschitz-continuous, we have 
\[
 \mathcal{E}_{s, p} (\gamma_{x, y} \compose w, \mathbb{B}^m)
 \le (\mathrm{Lip} \,\gamma_{x, y})^{p} \mathcal{E}_{s, p} (w, \mathbb{B}^m)
 \le \mathcal{E}_{s, p} (w, \mathbb{B}^m) d_{\mathcal{N}} (x, y)^{p}.
\]
Next, we observe that \(f \compose \gamma_{x, y} \compose w = f(x)\)
on \(\mathbb{B}^m_{1/4} (a_-)\) and \(f \compose \gamma_{x, y} \compose w =f( y)\)
on \(\mathbb{B}^m_{1/4} (a_+)\). Therefore, we have when \(r \in (0, 1)\),
\[
\begin{split}
 \mathcal{E}_{r,q} (f \compose \gamma_{x, y} \compose w, \mathbb{B}^m)
 &\ge 2 \int_{\mathbb{B}^m_{1/4} (a_+)} \int_{\mathbb{B}^m_{1/4} (a_-)} 
  \frac{d_{\mathcal{F}} \big(f (x), f (y)\big)^q}{\abs{t - v}^{m + rq}} \diff t \diff v\\
  &\ge 2^{m + 1 + rq} \mathcal{L}^m (\mathbb{B}^m_{1/4})^2d_{\mathcal{F}} \big(f (x), f (y)\big)^q.
\end{split}
\]
When \(r = 1\), we have by H\"older's inequality,
\[
 \mathcal{E}^{1,q} (f \compose \gamma_{x, y} \compose w, \mathbb{B}^m)
 \ge \frac{\mathcal{E}^{1,1} (f \compose \gamma_{x, y} \compose w, \mathbb{B}^m)^q}{\mathcal{L}^m (\mathbb{B}^m)^{q - 1}}
\]
and 
\[
 \mathcal{E}^{1,1} (f \compose \gamma_{x, y} \compose w, \mathbb{B}^m)
 \ge \int_{[-\frac 12, \frac 12]\times \mathbb{B}^{m - 1}_{1/4}} \abs{D (f \compose \gamma_{x, y} \compose w)}\diff x
 \ge  \mathcal{L}^{m-1}(\mathbb{B}^{m - 1}_{1/4})d_{\mathcal{F}} \big(f (x), f (y)\big).
\]
The assertion \eqref{itHolder} then follows from the previous inequalities. 

The last statement follows from \cref{theoremDenseCounterexamples}.
\end{proof}

\section{The limiting case \(s = 0\)}
\label{case_s_0}
We consider the question about what the uniform boundedness becomes in the limit case \(s = 0\).
Looking at the proof \cref{theoremCounterexample}, it appears that the clustering step requires the condition \(s > 0\) to increase the energy. In order to bypass this difficulty, we assume that we have maps \(u : \mathbb{B}^m \to \mathcal{N}\) with a large Lebesgue energy \(\int_{\mathbb{B}^m} \abs{u}^p\).

\begin{theorem}
\label{theoremCounterexampleLp}
Let \(m,\, N \in \N_*\), \(p \in [1, +\infty)\) and let \(\mathcal{G}\) be an  energy over $\R^m$ with state space $\R^N$. Assume that for every measurable map \(u:\R^m\to\R^N\)
\begin{enumerate}[(i)]
\item (superadditivity) if the sets \(A,B\subset \R^m\) are open and if \(\Bar{A} \cap \Bar{B} = \emptyset\), then 
\[
\mathcal{G}\,(u,A\cup B)\geq \mathcal{G}\,(u,A)+\mathcal{G}\,(u,B),
\]
\item (scaling) for all $\lambda>0$, $h\in\R^m$ and any open set $A\subset\R^m$, one has 
\[
\mathcal{G}\,(u ,h+\lambda A)=\lambda^{m} \mathcal{G}\,(u(h + \lambda \cdot),A).
\]
\end{enumerate}
If for every \(u\in L^p(\mathbb{B}^m,\R^N)\), \(\mathcal{G}\,(u,\mathbb{B}^m) < + \infty\), then there exists \(C \in [0, +\infty)\) such that for every $u\in L^p(\mathbb{B}^m , \R^N)$, one has
\[
\mathcal{G}\,(u,\mathbb{B}^m)\le C \,\biggl(1 + \int_{\mathbb{B}^m}|u|^p \biggr).
\]
\end{theorem}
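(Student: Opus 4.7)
The plan is to argue by contradiction, following the template of the proof of \cref{theoremCounterexample} in the subcritical case but stripped of the two ingredients that have no role in a purely local space: the extension of \cref{extensionLemma} and the opening of \cref{openingLemma}. If the asserted estimate fails, for each \(n \in \N\) one may choose a map \(u_n \in L^p(\mathbb{B}^m, \R^N)\), extended by zero outside \(\mathbb{B}^m\), satisfying
\[
\mathcal{G}\,(u_n, \mathbb{B}^m) \geq n\, E_n, \qquad \text{where } E_n := 1 + \int_{\mathbb{B}^m} |u_n|^p \geq 1,
\]
and with \(\mathcal{G}\,(u_n, \mathbb{B}^m) < +\infty\) guaranteed by the standing hypothesis.

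I would then perform a single patching step modelled on Step 4 of the proof of \cref{theoremCounterexample}. Fix a small constant \(\alpha > 0\), set \(\rho_n^m := \alpha/(n^2 E_n)\), and note that since \(E_n \geq 1\) the radii \(\rho_n\) satisfy \(\sum_n \rho_n^m \leq \alpha \sum_n 1/n^2\). For \(\alpha\) sufficiently small this total volume is strictly less than that of \(\mathbb{B}^m_{1/2}\), so a standard dyadic packing produces pairwise disjoint closed balls \(\overline{B}_{\rho_n}(a_n) \subset \mathbb{B}^m_{1/2}\) with \(a_n \to 0\). Define \(u : \mathbb{B}^m \to \R^N\) by \(u(x) = u_n((x - a_n)/\rho_n)\) on \(B_{\rho_n}(a_n)\) and \(u(x) = 0\) elsewhere. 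By pairwise disjointness and a change of variable,
\[
\int_{\mathbb{B}^m} |u|^p = \sum_{n \in \N} \rho_n^m \int_{\mathbb{B}^m} |u_n|^p \leq \alpha \sum_{n \in \N} \frac{1}{n^2} < +\infty,
\]
so \(u \in L^p(\mathbb{B}^m, \R^N)\) and hence \(\mathcal{G}\,(u, \mathbb{B}^m) < +\infty\) by the hypothesis of the theorem. On the other hand, the scaling property applied with \(h = a_n\) and \(\lambda = \rho_n\) yields \(\mathcal{G}\,(u, B_{\rho_n}(a_n)) = \rho_n^m \mathcal{G}\,(u_n, \mathbb{B}^m)\), and countable superadditivity (a consequence of finite superadditivity combined with the domain monotonicity of \(\mathcal{G}\), via the monotone convergence theorem for series) then gives
\[
\mathcal{G}\,(u, \mathbb{B}^m) \geq \sum_{n \in \N} \rho_n^m \mathcal{G}\,(u_n, \mathbb{B}^m) \geq \sum_{n \in \N} n \rho_n^m E_n = \alpha \sum_{n \in \N} \frac{1}{n} = +\infty,
\]
a contradiction.

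The subtle point that the argument must sidestep is exactly the failure of the clustering construction of \cref{theoremCounterexample} in the \(s = 0\) regime: both \(\int |u|^p\) and \(\mathcal{G}\) scale with the same exponent \(m\) under \(u \mapsto u((\cdot - a)/\rho)\) on a ball of radius \(\rho\), so no amount of packing and rescaling can by itself amplify the ratio \(\mathcal{G}/\int |u|^p\). This is precisely why the asserted bound carries an additive constant \(1\): a hypothetical violation then forces \(\mathcal{G}\,(u_n, \mathbb{B}^m) \geq n\) irrespective of how small \(\int |u_n|^p\) may be, and the resulting uniform lower bound \(E_n \geq 1\) is exactly what lets one tune the scales \(\rho_n\) so that \(\sum \rho_n^m E_n\) converges while \(\sum n \rho_n^m E_n\) diverges.
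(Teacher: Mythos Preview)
Your argument is correct and follows the same contradiction-and-patching strategy as the paper. The only difference is bookkeeping: the paper first spatially rescales each bad map so that \(\int_{\mathbb{B}^m}|u_n|^p \le 1\) and \(\mathcal{G}(u_n,\mathbb{B}^m) \ge 2^{nm}\), which lets it use the fixed dyadic radii \(\rho_n = 2^{-n-2}/\sqrt{m}\) and thereby sidesteps your (correct but less explicit) appeal to packing balls of variable radii with small total volume.
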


\Cref{theoremCounterexampleLp} allows one to recover classical results on \emph{superposition operators in Lebesgue spaces}. 
Given a Borel-measurable function \(f : \R^N \to \R^\ell\) and for every open set \(A \subset \R^m\) and every measurable function \(u:A\to\R^N\),
we set \(\mathcal{G}\,(u, A) = \int_{A} \abs{f \compose u}^p\).
By \cref{theoremCounterexampleLp}, if for every \(u \in L^p (\mathbb{B}^m,\R^N)\),  we have \(f \compose u \in L^p (\mathbb{B}^m,\R^\ell)\), then there exists a constant \(C \in [0, + \infty)\) such that for every \(u \in L^p (\mathbb{B}^m,\R^N)\) the following \emph{uniform bound} holds:
\[
 \int_{\mathbb{B}^m} \abs{f \compose u}^p \le C \Bigl( 1 + \int_{\mathbb{B}^m} \abs{u}^p\Bigr).
\]
By taking \(u\) to be a constant function, this implies in turn that for every \(t \in \R^N\),
\[
 \abs{f (t)} \le C' \bigl(1 + \abs{t}\bigr),
\]
which is a classical necessary and sufficient condition to have a superposition operator acting from \(L^p(\mathbb{B}^m,\R^N)\) to \(L^p (\mathbb{B}^m,\R^\ell)\) \cite{Krasnoselskii1964}*{Theorem 2.3} (see also \cite{AppellZabreiko1990}*{Theorem 3.1}).

\begin{proof}[Proof of \cref{theoremCounterexampleLp}]
\resetconstant
We assume by contradiction that there exists a sequence \((v_n)_{n \in \N}\) of measurable maps from \(\mathbb{B}^m \) to \(\R^N\) such that for each \(n \in \N\), we have \(\mathcal{G}\,(v_n,\mathbb{B}^m)<+\infty\), and such that
\[
\lim_{n \to \infty}
\frac{\mathcal{G}\,(v_n, \mathbb{B}^m)}{1+\displaystyle \int_{\mathbb{B}^m}\abs{v_n}^p}  = + \infty;
\]
we are going to construct a function \(u\in L^p(\mathbb{B}^m,\R^N)\) such that \(\mathcal{G}\,(u, \mathbb{B}^m) = + \infty\).

By rescaling \(v_n\) if \(\int_{\mathbb{B}^m}|v_n|>1\) and passing to a subsequence if necessary, we can assume that for each \(n \in \N\), there exists a function $u_n\in L^p( \mathbb{B}^m , \R^N)$ such that 
\begin{align*}
 \int_{\mathbb{B}^m}\abs{u_n}^p&\le 1 &
 & \text{ and } &
 2^{nm} &\le \mathcal{G}\,(u_n,\mathbb{B}^m) < + \infty.
\end{align*}
If \(Q\) denotes a cube of side-lenght \(1/\sqrt{m}\) contained in \(\mathbb{B}^m\), by dyadic decomposition, this cube \(Q\) contains a family of cubes of sidelengths \((2^{-n-1}/\sqrt{m})_{n \in \N}\) and thus, if we set \(\rho_n = 2^{-n - 2}/\sqrt{m}\), there exists a sequence of points \((a_n)_{n \in \N}\) such that the balls \(\bigl(\Bar{B}_{\rho_n} (a_n)\bigr)_{n \in \N}\) are disjoint balls contained in the open ball \(\mathbb{B}^m\). We define the map \(u : \mathbb{B}^m \to \R^N\) for each point \(x \in \mathbb{B}^m\) by 
\[
 u (x) = 
 \begin{cases}
   u_n \bigl(\frac{x - a_n}{\rho_n}\bigr) & \text{if \(x \in \mathbb{B}^m_{\rho_n} (a_n)\)},\\
   0 & \text{otherwise}.
 \end{cases}
\]
We have by countable superadditivity, translation-invariance and scaling of the energy \(\mathcal{G}\),
\[
 \mathcal{G}\,(u, \mathbb{B}^m) \ge \sum_{n \in \N} \mathcal{G}\,\bigl(u, \mathbb{B}^m_{\rho_n} (a_n)\bigr)
 = \sum_{n \in \N} \rho_n^{m} \mathcal{G}\,\bigl(u_n, \mathbb{B}^m\bigr)
 \ge \sum_{n \in \N} \Big(\frac{2^{-n - 2}}{\sqrt{m}}\Big)^m 2^{nm} = + \infty.
\]
On the other hand, we have
\[
 \int_{\mathbb{B}^m}|u|^p
 = \sum_{n \in \N} \rho_n^{m} \int_{\mathbb{B}^m}\abs{u_n}^p
 \le \sum_{n \in \N}\Big(\frac{2^{-n - 2}}{\sqrt{m}}\Big)^m < + \infty,
\]
thus ending the proof.
\end{proof}


\section{Higher order spaces}
\label{higher}

If $\mathcal{N}$ is a connected Riemannian manifold embedded in a Euclidean space $\R^\nu$ by a smooth embedding, and if \(\mathcal{M}\) is \(m\)-dimensional Riemannian manifold which is either Euclidean or compact, 
the nonlinear Sobolev space $W^{s,p}(\mathcal{M},\mathcal{N})$ can be defined extrinsically by 
\[
W^{s,p}(\mathcal{M},\mathcal{N})=\left\{u\in W^{s,p}(\mathcal{M},\R^\nu)\st u(x)\in\mathcal{N}\text{ for almost every }x\in\mathcal{M}\right\},
\]
where \(W^{s,p}(\mathcal{M},\R^\nu)\) is the usual linear higher order Sobolev space, that is the space of measurable maps \(u:\mathcal{M}\to\R^\nu\) such that \(\mathcal{E}_{s,p}(u,\mathcal{M})<+\infty\).

Here, if $s \in \N$ is an integer, the homogeneous Sobolev energy  $\mathcal{E}_{s,p}$ is defined for every measurable map \(u:\mathcal{M}\to\R^\nu\) by
\[
\mathcal{E}_{s,p}(u,\mathcal{M})=
\begin{dcases}
\int_\mathcal{M} |D^s u|^p &\text{if the \(s^{th}\)-order weak derivative \(D^s u\) belongs to \(L^p\)},\\
+\infty&\text{otherwise,}
\end{dcases}
\]
where $D^s u$ is understood as a $s$-linear map on $\R^m$ valued in $\R^\nu$, and $|\cdot|$ is any norm on the linear space composed by $s$-linear maps.
If $s \not \in \N$ is not an integer, we set
\[
\mathcal{E}_{s,p}(u,\mathcal{M})=
\begin{dcases}
\mathcal{E}_{s - \floor{s}, p}(D^{\floor{s}} u,\mathcal{M})&\text{if }u\in W^{\floor{s},p}(\mathcal{M},\R^\nu),\\
+\infty&\text{otherwise,}
\end{dcases}
\]
where $\mathcal{E}_{s - \floor{s},p}$ with \(s - \floor{s} \in (0, 1)\) has been defined in \eqref{eqDefGagliardo}
and  $D^{\floor{s}} u$ is a function from $\mathcal{M}$ valued in the normed linear space composed of $\floor{s}$-linear maps.

A generalization of \cref{uniform} is the following

\begin{theorem}[Higher order nonlinear uniform boundedness principle]
\label{uniformHigher}
Let \(m \in \N_*\), \(s \in (1, +\infty)\), \(p \in [1, +\infty)\), \(\mathcal{N}\) be a connected Riemannian manifold, which if \(sp > m\) or \(s = m = \frac{m}{p}\) is compact, and let \(\mathcal{G}\) be an  energy over $\R^m$ with state space $\mathcal{N}$. 
Assume that for every measurable map \(u:\R^m\to\mathcal{N}\)
\begin{enumerate}[(i)]
\item (superadditivity) for all open sets \(A,B\subset\R^m\) with disjoint closure,
\[
\mathcal{G}\,(u,A\cup B)\geq \mathcal{G}\,(u,A)+\mathcal{G}\,(u,B),
\]
\item (scaling) for all $\lambda>0$, $h\in\R^m$ and any open set $A\subset\R^m$,
\[
\mathcal{G}\,(u ,h+\lambda A)=\lambda^{m-sp} \mathcal{G}\,(u(h + \lambda \cdot),A).
\]
\end{enumerate}
If for every measurable function \(u : \mathbb{B}^m_2 \to \mathcal{N}\), \(\mathcal{E}_{s, p} (u, \mathbb{B}^m_2) < + \infty\) implies \(\mathcal{G}\,(u,\mathbb{B}^m) < + \infty\) and \(\mathcal{E}_{s, p} (u, \mathbb{B}^m_2) = 0\) implies \(\mathcal{G}\,(u,\mathbb{B}^m) = 0\), then there exists a constant $C \in [0, +\infty)$ such that for every measurable map $u:\mathbb{B}^m_2 \to \mathcal{N}$, if either \(sp < m\) or \(\mathcal{E}_{s, p} (u) \le 1/C\), then 
\begin{equation}
\label{conclusionHIGHER}
\mathcal{G}\,(u,\mathbb{B}^m)\leq C \left(\mathcal{E}_{s,p}(u,\mathbb{B}^m_2)+\mathcal{E}_{1,p}(u,\mathbb{B}^m_2)\right).
\end{equation}
\end{theorem}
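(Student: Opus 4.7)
The plan is to argue by contradiction and to adapt the three-step scheme (opening, clustering, patching) of the proof of \cref{theoremCounterexample} to the higher-order setting. Assuming \eqref{conclusionHIGHER} fails, we would extract a sequence \((u_n)_{n \in \N}\) of measurable maps \(u_n : \mathbb{B}^m_2 \to \mathcal{N}\) satisfying
\[
 \mathcal{G}\,(u_n, \mathbb{B}^m) \ge \mu^n \bigl(\mathcal{E}_{s,p}(u_n, \mathbb{B}^m_2) + \mathcal{E}_{1,p}(u_n, \mathbb{B}^m_2)\bigr)
\]
for a suitable base \(\mu > 1\), supplemented in the critical and supercritical regimes by the smallness assumption \(\mathcal{E}_{s,p}(u_n, \mathbb{B}^m_2) \to 0\), and then produce a single limiting map \(u : \mathbb{B}^m_2 \to \mathcal{N}\) with \(\mathcal{E}_{s,p}(u, \mathbb{B}^m_2) + \mathcal{E}_{1,p}(u, \mathbb{B}^m_2) < +\infty\) but \(\mathcal{G}\,(u, \mathbb{B}^m) = +\infty\).

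The heart of the argument is a higher-order opening lemma replacing \cref{openingLemma}. Given a Lipschitz-continuous change of variables \(\varphi_a = \varphi(\cdot - a) + a\) with \(a \in \mathbb{B}^m_{\eta \rho}\), writing
\(\mathcal{E}_{s,p}(u \compose \varphi_a) = \mathcal{E}_{s - \floor{s}, p}\bigl(D^{\floor{s}}(u \compose \varphi_a)\bigr)\)
when \(s\) is not an integer, the Faà di Bruno formula expresses \(D^{\floor{s}}(u \compose \varphi_a)\) as a sum of products of \((D^j u) \compose \varphi_a\) with polynomials in the derivatives of \(\varphi_a\), for \(1 \le j \le \floor{s}\). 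Averaging over \(a\) as in the proof of \cref{openingLemma}, the top-order term produces \(\mathcal{E}_{s,p}(u, \mathbb{B}^m_{\lambda \rho})\), while the intermediate terms produce integrals \(\int \abs{D^j u}^p\) for \(1 \le j < s\). These intermediate Lebesgue norms are absorbed into \(\mathcal{E}_{s,p} + \mathcal{E}_{1,p}\) by the Gagliardo--Nirenberg interpolation inequality; this is precisely why \eqref{conclusionHIGHER} carries the extra \(\mathcal{E}_{1,p}\) term and why the enlarged ball \(\mathbb{B}^m_2\) appears on the right-hand side, providing room for the opening and for the higher-order counterpart of \cref{extensionLemma}.

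Once the higher-order opening is in place, the remaining steps essentially repeat \cref{theoremCounterexample}. We modify each \(u_n\) into \(\Tilde{u}_n\) equal to a constant \(b_n \in \mathcal{N}\) outside \(\mathbb{B}^m\); in the critical and supercritical cases we force a common target \(b_* \in \mathcal{N}\) using either a Lipschitz curve joining \(b_n\) to \(b_*\) together with the smallness of \(\mathcal{E}_{s,p}(u_n)\), or, when \(sp \ge m\), the compactness of \(\mathcal{N}\), exactly as in Step 3 of the earlier proofs. We then patch rescaled translates of \(\Tilde{u}_n\) into disjoint balls \(\mathbb{B}^m_{\rho_n}(a_n) \subset \mathbb{B}^m_{1/2}\) with \(\rho_n = 2^{-n-2}/\sqrt{m}\): the scaling and superadditivity axioms give \(\mathcal{G}\,(u, \mathbb{B}^m) = +\infty\) as before, while a countable-patching estimate applied to \(D^{\floor{s}} u\)---which vanishes outside each patch because \(u\) is constant there---plays the role of \cref{subadditive} and keeps \(\mathcal{E}_{s,p}(u, \mathbb{B}^m_2) + \mathcal{E}_{1,p}(u, \mathbb{B}^m_2)\) finite. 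The main obstacle is the bookkeeping in the higher-order opening: one must carefully verify that every intermediate derivative appearing in the Faà di Bruno expansion can be interpolated between \(\mathcal{E}_{s,p}\) and \(\mathcal{E}_{1,p}\) uniformly on the enlarged ball, and that the fractional integral defining \(\mathcal{E}_{s - \floor{s}, p}(D^{\floor{s}} u)\) splits across the disjoint patches with the same \(2^p\) constant as in \cref{subadditive}.
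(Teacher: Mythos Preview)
Your proposal is correct and follows essentially the same approach as the paper: the authors explicitly say that the proof of \cref{uniformHigher} is ``rather similar'' to that of \cref{uniform}, that the only real change is the opening lemma, and they then state and prove precisely the higher-order opening lemma (\cref{openinghigher}) you outline, using the Fa\`a di Bruno formula and interpolation to control the intermediate derivatives by \(\mathcal{E}_{1,p}+\mathcal{E}_{s,p}\). One small clarification: the enlarged ball \(\mathbb{B}^m_2\) in the hypothesis is there to \emph{replace} the extension step rather than to make room for a higher-order version of \cref{extensionLemma} (which, as the remark following the theorem indicates, is only available without extra terms when \(s<1+1/p\)); with \(u\) already defined on \(\mathbb{B}^m_2\), the opening can be performed directly.
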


Compared to \cref{uniform}, the conclusion of \cref{uniformHigher} has two weaknesses: the right-hand side contains a lower order energy \(\mathcal{E}_{1, p} (u, \mathbb{B}^m_2)\) and the energies in the right-hand side are evaluated on a larger ball than on the left-hand side. There are several ways to mitigate this issue.

\begin{rmk}
\Cref{uniformHigher} implies that if \(u : \mathbb{B}^m_1 \to \mathcal{N}\) is constant in the annulus \(\mathbb{B}^m_1 \setminus \mathbb{B}^m_{1/2}\),
then 
\[
\mathcal{G}\,(u,\mathbb{B}^m)\leq C \mathcal{E}_{s,p}(u,\mathbb{B}^m_1).
\]
Indeed, if we consider the extension \(\Bar{u} : \mathbb{B}^m_2 \to \mathcal{N}\) of \(u\) by the same constant, we have by a direct computation and by the Poincar\'e inequality,
\[
   \mathcal{E}_{s,p}(\Bar{u},\mathbb{B}^m_2)+\mathcal{E}_{1,p}(\Bar{u},\mathbb{B}^m_2)
   \le \mathcal{E}_{s,p}(u,\mathbb{B}^m_1).
\]
\end{rmk}

\begin{rmk}\resetconstant
When \(s < 1 + \frac{1}{p}\), we can conclude that 
\[
 \mathcal{G}\,(u,\mathbb{B}^m)\leq C \left(\mathcal{E}_{s,p}(u,\mathbb{B}^m_1)+\mathcal{E}_{1,p}(u,\mathbb{B}^m_1)\right).
\]
Indeed following the proof of \cref{extensionLemma}, we use the construction by Euclidean inversion of \(v:\mathbb{B}^m_\lambda\to\mathcal{N}\) by \eqref{eqDefvInversion}. We have then 
\[
\begin{split}
\int_{\mathbb{B}^m_\lambda\setminus\mathbb{B}^m}\int_{\mathbb{B}^m}\frac{|Dv(y) -D v(x)|^p\diff x\diff y}{|x-y|^{m+(s-1) p}}
& \le 2^{p - 1} \int_{\mathbb{B}^m_\lambda\setminus\mathbb{B}^m}\int_{\mathbb{B}^m}\frac{|Dv(y)|^p  + |D v(x)|^p\diff x\diff y}{|x-y|^{m+(s-1) p}}\\
& \le 2^{p - 1} \int_{\mathbb{B}^m_\lambda\setminus\mathbb{B}^m} |Dv(y)|^p\biggl(\int_{\mathbb{B}^m}\frac{\diff x}{|x-y|^{m+(s-1) p}}\biggr) \diff y\\
&\qquad 
+\int_{\mathbb{B}^m} |Dv(x)|^p\biggl(\int_{\mathbb{B}^m_\lambda\setminus\mathbb{B}^m}\frac{\diff y}{|x-y|^{m+(s-1) p}}\biggr) \diff x\\
&\le \C \int_{\mathbb{B}^m}\frac{|Du(x)|^p\diff x}{(1-|x|)^{(s-1)p}}.
\end{split}
\]
By the Hardy inequality for fractional Sobolev spaces \cite{Dyda2004}*{(17)}, we have 
\[
  \int_{\mathbb{B}^m}\frac{|Du(x)|^p\diff x}{(1-|x|)^{(s-1)p}}
  \le \C \left(\mathcal{E}_{s,p}(u,\mathbb{B}^m_1)+\mathcal{E}_{1,p}(u,\mathbb{B}^m_1)\right).
\]
\end{rmk}

The proof of \cref{uniformHigher} is rather similar to that of \cref{uniform} (corresponding to the case \(s\leq 1\)). The main change is in the opening lemma and this is why we need the additional term \(\mathcal{E}_{1,p}(u,\mathbb{B}^m_2)\) in the estimate of \(\mathcal{G}\,(u,\mathbb{B}^m)\). Here, we will not give a detailed proof of \cref{uniformHigher}; we only state and prove an opening lemma for higher order Sobolev maps.

\begin{lem}\label{openinghigher}
Let \(m\in\N_*\), \(s\in (1,+\infty)\), \(p\in [1,+\infty)\) and \(\lambda>1\), \(\eta\in (0,\lambda)\). For every \(\varphi \in C^\infty (\mathbb{B}^m_{(1+\eta)\rho}, \mathbb{B}^m_{(\lambda-\eta)\rho})\), there exists a constant \(C>0\) such that for every \(\rho>0\) and every measurable map \(u:\mathbb{B}^m_{\lambda\rho}\to\mathcal{N}\), there exists a point \(a\in\mathbb{B}^m_{\eta\rho}\) such that
\begin{equation*}
\mathcal{E}_{s,p}(u\compose (\varphi(\cdot-a)+a),\mathbb{B}^m_\rho)\le C\big(\mathcal{E}_{1,p}(u,\mathbb{B}^m_{\lambda\rho})+\mathcal{E}_{s,p}(u,\mathbb{B}^m_{\lambda\rho})\big).
\end{equation*}
\end{lem}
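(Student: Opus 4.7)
The plan is to reduce to the normalized case $\rho = 1$ by scaling and combine Faà di Bruno's chain rule for higher derivatives with the averaging–change-of-variable argument of \cref{openingLemma} and standard Gagliardo--Nirenberg interpolation in Sobolev spaces. After rescaling, since $\varphi$ is smooth, all derivatives of $\varphi_a := \varphi(\cdot - a) + a$ of order at most $\floor{s}+1$ are bounded on $\mathbb{B}^m_1$ uniformly in $a \in \mathbb{B}^m_{\eta}$.

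I first treat the integer case $s = k \in \N$, for which Faà di Bruno gives
\[
D^k (u \compose \varphi_a) = \sum_{j=1}^{k} \bigl((D^j u) \compose \varphi_a\bigr) \cdot P_{k,j,a},
\]
where each $P_{k,j,a}$ is a polynomial in derivatives of $\varphi_a$ of order at most $k$, hence uniformly bounded in sup-norm. Taking $p$-th powers, integrating over $\mathbb{B}^m_1$, averaging over $a \in \mathbb{B}^m_{\eta/2}$, and performing the successive changes of variable $y = x - a$ then $w = a + \varphi(y)$ exactly as in the proof of \cref{openingLemma} yields
\[
\fint_{\mathbb{B}^m_{\eta/2}} \mathcal{E}_{k,p}(u \compose \varphi_a, \mathbb{B}^m_1) \diff a \le C \sum_{j=1}^{k} \int_{\mathbb{B}^m_\lambda} \abs{D^j u}^p.
\]
The standard Gagliardo--Nirenberg interpolation inequality on $\mathbb{B}^m_\lambda$ bounds each $\|D^j u\|_{L^p}$ for $1 \le j \le k$ by $C\bigl(\|Du\|_{L^p} + \|D^k u\|_{L^p}\bigr)$, so the right-hand side is controlled by $C\bigl(\mathcal{E}_{1,p}(u, \mathbb{B}^m_\lambda) + \mathcal{E}_{k,p}(u, \mathbb{B}^m_\lambda)\bigr)$, and the mean-value principle provides the desired point $a$.

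For the non-integer case $s = k + \sigma$ with $k = \floor{s}$ and $\sigma \in (0,1)$, by definition $\mathcal{E}_{s,p}(u \compose \varphi_a, \mathbb{B}^m_1) = \mathcal{E}_{\sigma,p}\bigl(D^k(u \compose \varphi_a), \mathbb{B}^m_1\bigr)$. Expanding again by Faà di Bruno, it suffices to bound $\mathcal{E}_{\sigma,p}\bigl(((D^j u) \compose \varphi_a) \cdot P_{k,j,a}, \mathbb{B}^m_1\bigr)$ for every $j \in \{1, \ldots, k\}$. A Leibniz-type estimate for the Gagliardo seminorm, using that $P_{k,j,a}$ is Lipschitz with uniformly controlled sup-norm and gradient, dominates this by
\[
C \Bigl(\mathcal{E}_{\sigma,p}\bigl((D^j u) \compose \varphi_a, \mathbb{B}^m_1\bigr) + \|(D^j u) \compose \varphi_a\|_{L^p(\mathbb{B}^m_1)}^p\Bigr).
\]
The first term is handled by applying \cref{openingLemma} to the Euclidean-valued map $D^j u$ in place of $u$ with fractional regularity $\sigma$: averaging over $a$ yields a bound by $\mathcal{E}_{\sigma,p}(D^j u, \mathbb{B}^m_\lambda) = \mathcal{E}_{j+\sigma,p}(u, \mathbb{B}^m_\lambda)$, which since $1 \le j + \sigma \le s$ is controlled by $C\bigl(\mathcal{E}_{1,p}(u, \mathbb{B}^m_\lambda) + \mathcal{E}_{s,p}(u, \mathbb{B}^m_\lambda)\bigr)$ via interpolation. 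The second term reduces after change of variable to $\|D^j u\|_{L^p(\mathbb{B}^m_\lambda)}^p$ and is bounded identically.

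The main obstacle is the fractional Leibniz rule needed in the non-integer case. It can be extracted from the pointwise bound
\[
\abs{f(x)g(x) - f(y)g(y)}^p \le 2^{p-1}\bigl(\|g\|_\infty^p\, \abs{f(x) - f(y)}^p + \|\nabla g\|_\infty^p\, \abs{f(y)}^p \, \abs{x-y}^p\bigr),
\]
combined with the integrability of $\abs{x-y}^{(1-\sigma)p - m}$ over bounded domains (valid since $\sigma < 1$). Summing the finitely many terms produced by Faà di Bruno and averaging over $a \in \mathbb{B}^m_{\eta/2}$ then produces, by the mean-value principle, a single point $a \in \mathbb{B}^m_\eta$ at which the full estimate holds; the constant $C$ depends on $\varphi$, $s$, $p$, $\lambda$, $\eta$, $m$ but not on $\rho$ or $u$, so the general case follows by scaling.
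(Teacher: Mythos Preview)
Your proposal is correct and follows essentially the same route as the paper: Fa\`a di Bruno for the chain rule, uniform bounds on the derivatives of $\varphi_a$, averaging over $a$ with the same change of variables as in \cref{openingLemma}, and in the fractional case the same pointwise Leibniz-type difference estimate producing one Gagliardo term and one $L^p$ term via the integrability of $|x-y|^{(1-\sigma)p - m}$. The only cosmetic differences are that you normalize to $\rho = 1$ up front and that you name the Gagliardo--Nirenberg interpolation explicitly, whereas the paper absorbs the intermediate derivatives into $\mathcal{E}_{1,p}+\mathcal{E}_{s,p}$ without comment.
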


The lower-order term in \cref{uniformHigher} comes from the estimate of \cref{openinghigher}.
This lower-order term cannot be removed: if \(u\) is linear and \(\varphi\) is not a polynomial of degree at most \(\lceil s\rceil-1\), where $\lceil s\rceil$ stands for the smallest integer greater than or equal to $s$, then \(\mathcal{E}_{s,p}(u,\mathbb{B}^m_{\lambda\rho}) = 0\) and for every \(a \in \mathbb{B}^m_{\eta \rho}\), \(\mathcal{E}_{s,p}(u\compose (\varphi(\cdot-a)+a),\mathbb{B}^m_\rho) > 0\).

\begin{proof}[Proof of \cref{openinghigher}]%
\resetconstant
We define for each \(a\in\mathbb{B}^m_{\eta\rho}\) the map \(\varphi_a=(\varphi(\cdot-a)+a):\mathbb{B}^m_\rho\to\mathbb{B}^m_{\lambda\rho}\). We will prove the average estimate
\begin{equation}\label{meanenergyhigher}
\fint_{\mathbb{B}^m_{\eta\rho/2}} \mathcal{E}_{s,p}(u \compose \varphi_a,\mathbb{B}^m_\rho)\diff a\leq C\big(\mathcal{E}_{1,p}(u,\mathbb{B}^m_{\lambda\rho})+\mathcal{E}_{s,p}(u,\mathbb{B}^m_{\lambda\rho})\big).
\end{equation}
\emph{In the case \(s\in\N_*\)}, we follow \cite{BousquetPonceVanSchaftingen2015}*{Lemma 2.3}; by an easy induction over $s$ and a rather classical approximation procedure, one gets the following claim:
\begin{claim}\label{chain}
For every $u\in W^{s,p}(\mathbb{B}^m_{\lambda\rho},\R^\nu)$,  $\varphi\in C^\infty (\mathbb{B}^m_{\rho},\mathbb{B}^m_{\lambda\rho})$, for almost every  $x\in\mathbb{B}^m_{\rho}$ and $h=(h_1,\dots,h_s)\in (\R^m)^s$,
\begin{equation}\label{chainrule}
D^s (u\compose\varphi)(x)[h]
=\sum_{k=1}^s\sum_{J\in\mathcal{P}_k(s)}\,c_{k,J} \,(D^ku)(\varphi(x))\bigl[D^{|J_1|}\varphi(x)[h_{J_1}],\dotsc,D^{|J_k|}\varphi(x)[h_{J_k}]\bigr],
\end{equation}
where $\mathcal{P}_k(s)$ is the set of all partitions $J=(J_1,\dots,J_k)$ of $\{1,\dots,s\}$ in $k$ non empty sets, the $c_{k,J}\in\R$ are some constants depending on $k,J$, and $h_J:=(h_j)_{j\in J}$ for every non empty subset $J\subset\{1,\dots,s\}$.
\end{claim}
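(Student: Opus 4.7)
The plan is to establish the chain-rule identity \eqref{chainrule} in two stages: first prove it pointwise for smooth maps \(u \in C^\infty(\mathbb{B}^m_{\lambda\rho}, \R^\nu)\) by induction on the integer \(s \in \N_*\), and then extend it to \(u \in W^{s, p}\) by mollification. The identity is a version of Fa\`a di Bruno's multivariable chain rule, so both stages are essentially classical; the only task beyond well-known formulas is to track the explicit combinatorial indexing by partitions in \(\mathcal{P}_k(s)\).

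For the induction, the base case \(s = 1\) reduces to the ordinary chain rule \(D(u \compose \varphi)(x)[h_1] = Du(\varphi(x))[D\varphi(x)[h_1]]\), corresponding to \(k = 1\), the unique partition \((\{1\}) \in \mathcal{P}_1(1)\), and \(c_{1, (\{1\})} = 1\). Assuming \eqref{chainrule} at order \(s - 1\), I would differentiate both sides once more in a new direction \(h_s\). A generic summand indexed by \((k, J)\) with \(J = (J_1, \dots, J_k) \in \mathcal{P}_k(s-1)\) splits into \(k + 1\) new summands by the product rule combined with the base chain rule: one summand from differentiating the outer factor \((D^k u)(\varphi(x))\) yields \((D^{k+1} u)(\varphi(x))\) applied to an additional slot filled by \(D\varphi(x)[h_s]\) and corresponds to the refined partition \((J_1, \dots, J_k, \{s\}) \in \mathcal{P}_{k+1}(s)\); and for each \(i \in \{1, \dots, k\}\), differentiating the factor \(D^{|J_i|}\varphi(x)[h_{J_i}]\) yields \(D^{|J_i|+1}\varphi(x)[h_{J_i \cup \{s\}}]\), which corresponds to inserting \(s\) into the block \(J_i\). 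Every partition \(J' \in \mathcal{P}_{k'}(s)\) is obtained in exactly one way from some \(J \in \mathcal{P}_{k'}(s-1) \cup \mathcal{P}_{k'-1}(s-1)\) through one of these two operations, so the contributions can be regrouped into the form \eqref{chainrule} at order \(s\) with well-defined new coefficients \(c_{k', J'} \in \R\), closing the induction.

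For the approximation, every weak derivative \(D^k u\) with \(k \le s\) lies in \(L^p\), so mollifying with a standard smooth mollifier yields \(u_\varepsilon \in C^\infty\) with \(D^k u_\varepsilon \to D^k u\) in \(L^p_\mathrm{loc}(\mathbb{B}^m_{\lambda\rho})\) as \(\varepsilon \to 0\). The smooth-case identity applies to every \(u_\varepsilon\), and the derivatives \(D^j \varphi\) for \(j \le s\) are smooth and uniformly bounded on \(\overline{\mathbb{B}^m_\rho}\). The main obstacle is ensuring that \((D^k u_\varepsilon) \compose \varphi \to (D^k u) \compose \varphi\) almost everywhere on \(\mathbb{B}^m_\rho\) along a subsequence, which is nontrivial because \(\varphi\) need not be a diffeomorphism and may fail Lusin's condition \((N^{-1})\), so that the a.e.\ composition with \(\varphi\) is not automatically continuous on \(W^{s,p}\). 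This is handled by the standard device of restricting first to the open set where \(\det D\varphi \neq 0\), on which \(\varphi\) is a local diffeomorphism and the change-of-variables formula yields strong \(L^p\)-continuity of the composition, and then treating the critical set separately using the smoothness of \(\varphi\) together with an approximation of \(u\) by continuous representatives via Sobolev embedding in the appropriate range. Passing term by term to the limit in \eqref{chainrule} for \(u_\varepsilon\) then produces the identity for \(u\) almost everywhere on \(\mathbb{B}^m_\rho\).
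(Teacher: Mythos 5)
Your two-stage plan --- Fa\`a di Bruno by induction on \(s\) for smooth \(u\), then an approximation argument --- is exactly the route the paper indicates for \cref{chain} (it disposes of the claim with a one-line appeal to ``an easy induction over \(s\) and a rather classical approximation procedure''), and your inductive step is correct: the bookkeeping that every partition of \(\{1,\dots,s\}\) arises uniquely from a partition of \(\{1,\dots,s-1\}\) by either inserting \(s\) into an existing block or appending the new block \(\{s\}\) is precisely what regroups the differentiated terms into the form \eqref{chainrule}.

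The gap is in your handling of the critical set in the approximation stage. You rightly single out the convergence \((D^k u_\varepsilon)\compose\varphi \to (D^k u)\compose\varphi\) as the delicate point, but the proposed fix --- passing to ``continuous representatives via Sobolev embedding in the appropriate range'' --- is not available: \(D^k u\) has a continuous representative only when \((s-k)p>m\), which is nowhere assumed and typically fails for \(k\) close to \(s\). More fundamentally, on \(Z=\{\det D\varphi=0\}\) the image \(\varphi(Z)\) is Lebesgue-null by Sard's theorem, so on \(Z\) (which may well have positive measure) the quantities \((D^k u)(\varphi(x))\), and indeed \(u\compose\varphi\) itself, are not determined by the \(W^{s,p}\)-equivalence class of \(u\); no mollification argument can then single out a canonical almost-everywhere limit, and for a general smooth \(\varphi\) (think of a projection onto a hyperplane) \(u\compose\varphi\) need not even be weakly differentiable, so the identity cannot be established for every fixed smooth \(\varphi\) by a term-by-term limit. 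This is exactly the representative issue the paper flags after \cref{openingLemma} (modifying \(u\) on a null set may change the admissible point \(a\)): the chain rule is to be read for an everywhere-defined Borel representative, and in the only place \cref{chain} is used (the proof of \cref{openinghigher}) the possible pathologies are absorbed by integrating in \(x\) and averaging over the translation parameter \(a\), for almost every value of which the compositions are well behaved. To close your argument you would either have to restrict to maps \(\varphi\) and representatives for which all compositions are meaningful and treat \(Z\) by a separate argument, or formulate and prove the claim in the averaged form in which it is actually applied, as is done for \(s\le 1\) in \cref{openingLemma}.
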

As a consequence of \cref{chain}, for almost every $x\in \mathbb{B}^m_\rho$, we get the estimate
\[
|D^s(u\compose\varphi)(x)|
\leq \sum_{\substack{k, j_1, \dots, j_k\in \{1,\dots,s\}\\
j_1+\dots+j_k=s}} |c_{k,J}|\, |(D^ku)(\varphi(x))|\, |D^{j_1}\varphi(x)|\dotsm |D^{j_k}\varphi(x)|.
\]
By Young's inequality for products, we have 
\[
  |D^{j_1}\varphi(x)|\dotsm |D^{j_k}\varphi(x)|
  \le \frac{j_1}{s} |D^{j_1}\varphi(x)|^\frac{s}{j_1} + \dotsb  + \frac{j_k}{s} |D^{j_k}\varphi(x)|^\frac{s}{j_k}
\]
and thus
\[
|D^s(u\compose\varphi)(x)|\leq \C\sum_{k=1}^s \sum_{l = 1}^{s-k +1}|(D^ku)(\varphi(x))|\, |D^l\varphi|^{\frac sl}.
\]
Applying the inequality to our functions $u$ and $\varphi_a$, and integrating the inequality over $x\in\mathbb{B}^m_\rho$ and $a\in\mathbb{B}^m_{\eta\rho/2}$ yield a constant $\Cl{lpm}$ (depending on $\varphi$) such that
\begin{align*}
\int_{\mathbb{B}^m_{\eta\rho/2}}\mathcal{E}_{s,p}(u \compose \varphi_a,\mathbb{B}^m_\rho)\diff a&=\int_{\mathbb{B}^m_{\eta\rho/2}}\int_{\mathbb{B}^m_\rho}|D^s (u \compose \varphi_a)|^p \diff x\diff a\\
&\leq \Cr{lpm} \sum_{k = 1}^s \int_{\mathbb{B}^m_{\eta\rho/2}}\int_{\mathbb{B}^m_\rho}|D^k u|^p(a+\varphi(x-a))\diff x\diff a.
\end{align*}
By changes of variable \(y = x - a\in \mathbb{B}^m_{(1+\frac \eta 2)\rho}\) and \(w=a+\varphi(y)\in \mathbb{B}^m_{(\lambda-\frac\eta 2)\rho}\), we are led to the estimates
\begin{equation}
\label{fubiniblue}
\int_{\mathbb{B}^m_{\eta\rho/2}}\mathcal{E}_{s,p}(u \compose \varphi_a,\mathbb{B}^m_\rho)\diff a\leq \C\sum_{k=1}^{s}\int_{\mathbb{B}_{(\lambda-\frac\eta 2)\rho}}|D^ku|^p(w)\diff w\leq \C\big(\mathcal{E}_{1,p}(u,\mathbb{B}^m_{\lambda\rho})+\mathcal{E}_{s,p}(u,\mathbb{B}^m_{\lambda\rho})\big).
\end{equation}
This ends the proof in the integer case.

\medbreak

\noindent
\emph{When \(s\) is not an integer}, we  estimate the norm of the difference $D^{\floor{s}}(u \compose \varphi_a)(x)-D^{\floor{s}}(u \compose\varphi_a)(y)$. 
We use \cref{chain} in order to express $D^{\floor{s}}(u \compose \varphi_a)(x)$ as a linear combination of expressions of the form $F(x):=L(x)[H_1(x),\dots,H_k(x)]$ with $L(x)=(D^ku)(\varphi_a(x))$ and $H_l(x)[h]=D^{|J_l|}(\varphi_a)(x)[h_{J_l}]$. 
We recall that if $\Phi$ is a multilinear map defined on a cross product of linear spaces $A_1\times\dots\times A_k$ and $a=(a_1,\dots,a_k),\, b=(b_1,\dots,b_k)\in A_1\times \dots\times A_k$ then
\[
\Phi(a)-\Phi(b)=\sum_{l=1}^k \Phi(b_1,\dots,b_{l-1},a_l-b_l,a_{l+1},\dots,a_k).
\]
In particular, we have the estimate
\[
|F(x)-F(y)|\leq |L(x)-L(y)|\prod_{l=1}^k |H_l(x)|+\sum_{m=1}^k\bigg( |L(y)|\, |H_m(x)-H_m(y)|\prod_{l\neq m} |H_l(x)|\bigg).
\]
Since $\varphi$ is smooth, each map $H_l$ is smooth thus yielding a constant $\Cl{gzed}>0$ depending on $\varphi$ such that
\begin{multline*}
|D^{\floor{s}}(u \compose \varphi_a)(x)-D^{\floor{s}}(u \compose \varphi_a)(y)|\\
\le \Cr{gzed}\bigg(\sum_{k=1}^{\floor{s}} |(D^ku)(\varphi_a(x))-(D^ku)(\varphi_a(y))|+|(D^ku)(\varphi_a(y))|\,\abs{x - y}\bigg).
\end{multline*}
Thus, by integration we get
\begin{multline}\label{inter}
\mathcal{E}_{s - \floor{s},p}(D^{\floor{s}}(u \compose \varphi_a),\mathbb{B}^m_\rho)\\
\leq \Cl{plo}\sum_{k=1}^{\floor{s}}\mathcal{E}_{s - \floor{s},p}(D^k u\compose\varphi_a,\mathbb{B}^m_\rho)
+
\Cr{plo}\sum_{k=1}^{\floor{s}}
\int_{\mathbb{B}^m_\rho\times\mathbb{B}^m_\rho} \frac{|(D^ku)(\varphi_a(y))|^p\diff x\diff y}{\abs{x - y}^{m+(s - \floor{s}-1)p}}.
\end{multline}
Since $m+(s - \floor{s}-1)p<m$, the second term in the right-hand side of \eqref{inter} is lower or equal than
\[
\Cr{plo}
\sum_{k=1}^{\floor{s}}
\int_{\mathbb{B}^m_\rho}|(D^ku)(\varphi_a(x))|^p\diff x\ \int_{\mathbb{B}^m_{2\rho}} \frac{\diff y}{|y|^{m+(s - \floor{s}-1)p}}
\leq 
\C \sum_{k=1}^{\floor{s}}\int_{\mathbb{B}^m_\rho}|(D^ku)(\varphi_a(x))|^p\diff x ,
\]
whose average over $\mathbb{B}^m_{\eta\rho/2}$ is controlled by \(\mathcal{E}_{1,p}(u,\mathbb{B}^m_{\lambda\rho})+\mathcal{E}_{s,p}(u,\mathbb{B}^m_{\lambda\rho})\), by the same changes of variables that that leading to \eqref{fubiniblue}. 

Moreover, by the proof of Lemma \ref{openingLemma}, the average over $\mathbb{B}^m_{\eta\rho/2}$ of the first term in the right-hand side of \eqref{inter} is lower or equal than
\begin{align*}
 \C\sum_{k=1}^{\floor{s}}\mathcal{E}_{s - \floor{s},p}(D^ku,\mathbb{B}^m_{\lambda\rho})\leq \C\big(\mathcal{E}_{1,p}(u,\mathbb{B}^m_{\lambda\rho})+\mathcal{E}_{s,p}(u,\mathbb{B}^m_{\lambda\rho})\big),
\end{align*}
thus ending the proof.
\end{proof}

\begin{rmk}
In the proof of \cref{uniformHigher} outlined above, it appears that the Sobolev maps are only precomposed. 
This implies that all the pointwise estimate in the proofs for when \(s \in \N_*\) are still valid for intrinsic covariant derivatives \cite{ConventVanSchafingen2017} and thus \cref{uniformHigher} holds for intrinsic weak covariant derivatives.
\end{rmk}

\begin{bibdiv}
\begin{biblist}

\bib{AdamsFournier2003}{book}{
   author={Adams, Robert A.},
   author={Fournier, John J. F.},
   title={Sobolev spaces},
   series={Pure and Applied Mathematics},
   volume={140},
   edition={2},
   publisher={Elsevier/Academic Press}, 
   address={Amsterdam},
   date={2003},
   isbn={0-12-044143-8},
}

\bib{Allaoui2009}{article}{
   author={Allaoui, Salah Eddine},
   title={Remarques sur le calcul symbolique dans certains espaces de Besov
   \`a valeurs vectorielles},
   journal={Ann. Math. Blaise Pascal},
   volume={16},
   date={2009},
   number={2},
   pages={399--429},
   issn={1259-1734},
}

\bib{AppellZabreiko1990}{book}{
   author={Appell, J\"urgen},
   author={Zabrejko, Petr P.},
   title={Nonlinear superposition operators},
   series={Cambridge Tracts in Mathematics},
   volume={95},
   publisher={Cambridge University Press}, 
   address={Cambridge},
   date={1990},
   pages={viii+311},
   isbn={0-521-36102-8},
}

\bib{BallZarnescu2011}{article}{
   author={Ball, John M.},
   author={Zarnescu, Arghir},
   title={Orientability and energy minimization in liquid crystal models},
   journal={Arch. Ration. Mech. Anal.},
   volume={202},
   date={2011},
   number={2},
   pages={493--535},
   issn={0003-9527},
}

\bib{Banach1922}{article}{
      author={Banach, Stefan},
       title={Sur les op{\' e}rations dans les ensembles abstraits et leur
  application aux {\' e}quations int{\' e}grales},
        date={1922},
     journal={Fund. Math.},
      volume={3},
      number={1},
       pages={133--181},
}

\bib{Bethuel1990}{article}{
   author={Bethuel, Fabrice},
   title={A characterization of maps in $H^1(B^3,S^2)$ which can be
   approximated by smooth maps},
   journal={Ann. Inst. H. Poincar\'e Anal. Non Lin\'eaire},
   volume={7},
   date={1990},
   number={4},
   pages={269--286},
   issn={0294-1449},
}

\bib{bethuel1991approximation}{article}{
    author={Bethuel, Fabrice},
   title={The approximation problem for Sobolev maps between two manifolds},
   journal={Acta Math.},
   volume={167},
   date={1991},
   number={3-4},
   pages={153--206},
   issn={0001-5962},
}

\bib{Bethuel2014Extension}{article}{
      author={Bethuel, Fabrice},
       title={A new obstruction to the extension problem for Sobolev maps
  between manifolds},
        date={2014},
        ISSN={1661-7738},
     journal={J. Fixed Point Theory Appl.},
      volume={15},
      number={1},
       pages={155--183},
}

\bib{Bethuel2014Weak}{article}{
  author={Bethuel, Fabrice},
  title={A counterexample to the weak density of smooth maps between manifolds in Sobolev spaces}, 
  eprint={arXiv:1401.1649},
}

\bib{BethuelBrezisCoron1990}{article}{
   author={Bethuel, F.},
   author={Brezis, H.},
   author={Coron, J.-M.},
   title={Relaxed energies for harmonic maps},
   conference={
      title={Variational methods},
      address={Paris},
      date={1988},
   },
   book={
      series={Progr. Nonlinear Differential Equations Appl.},
      volume={4},
      publisher={Birkh\"auser}, 
      address={Boston, Mass.},
   },
   date={1990},
   pages={37--52},
}

\bib{bethuel2007some}{article}{
   author={Bethuel, Fabrice},
   author={Chiron, David},
   title={Some questions related to the lifting problem in Sobolev spaces},
   conference={
      title={Perspectives in nonlinear partial differential equations},
   },
   book={
      series={Contemp. Math.},
      volume={446},
      publisher={Amer. Math. Soc.}, 
      address={Providence, R.I.},
   },
   date={2007},
   pages={125--152},
}

\bib{BethuelDemengel1995}{article}{
   author={Bethuel, F.},
   author={Demengel, F.},
   title={Extensions for Sobolev mappings between manifolds},
   journal={Calc. Var. Partial Differential Equations},
   volume={3},
   date={1995},
   number={4},
   pages={475--491},
   issn={0944-2669},
}
\bib{Bourdaud1993}{article}{
   author={Bourdaud, G\'erard},
   title={Fonctions qui op\`erent sur les espaces de Besov et de Triebel},
   journal={Ann. Inst. H. Poincar\'e Anal. Non Lin\'eaire},
   volume={10},
   date={1993},
   number={4},
   pages={413--422},
   issn={0294-1449},
}
\bib{BourdaudSickel2011}{article}{
   author={Bourdaud, G\'erard},
   author={Sickel, Winfried},
   title={Composition operators on function spaces with fractional order of
   smoothness},
   conference={
      title={Harmonic analysis and nonlinear partial differential equations},
   },
   book={
      series={RIMS K\^oky\^uroku Bessatsu, B26},
      publisher={Res. Inst. Math. Sci. (RIMS), Kyoto},
   },
   date={2011},
   pages={93--132},
}

\bib{BourgainBrezis2003}{article}{
title={On the equation \(\mathrm{div} Y=f\) and application to control of phases},
author={Bourgain, Jean},
author={Brezis, Ha\"\i m},
journal={Journal of the American Mathematical Society},
volume={16},
number={2},
pages={393--426},
date={2003}
}

\bib{bourgain2000lifting}{article}{
   author={Bourgain, Jean},
   author={Brezis, Ha\"\i m},
   author={Mironescu, Petru},
   title={Lifting in Sobolev spaces},
   journal={J. Anal. Math.},
   volume={80},
   date={2000},
   pages={37--86},
   issn={0021-7670},
}

\bib{BourgainBrezisMironescu2002}{article}{
   author={Bourgain, Jean},
   author={Brezis, Ha\"\i m},
   author={Mironescu, Petru},
   title={On the structure of the Sobolev space \(H^{1/2}\) with values into
   the circle},
   journal={C. R. Acad. Sci. Paris S\'er. I Math.},
   volume={331},
   date={2000},
   number={2},
   pages={119--124},
   issn={0764-4442},
}

\bib{BourgainBrezisMironescu2004}{article}{
   author={Bourgain, Jean},
   author={Brezis, Ha\"\i m},
   author={Mironescu, Petru},
   title={$H^{1/2}$ maps with values into the circle: minimal connections,
   lifting, and the Ginzburg-Landau equation},
   journal={Publ. Math. Inst. Hautes \'Etudes Sci.},
   number={99},
   date={2004},
   pages={1--115},
   issn={0073-8301},
}

\bib{BourgainBrezisMironescu2005}{article}{
title={Lifting, degree, and distributional Jacobian revisited},
author={Bourgain, Jean},
author={Brezis, Ha\"\i m},
author={Mironescu, Petru},
journal={Communications on pure and applied mathematics},
volume={58},
number={4},
pages={529--551},
date={2005},
publisher={Wiley Online Library}
}

\bib{BousquetPonceVanSchaftingen2013}{article}{
   author={Bousquet, Pierre},
   author={Ponce, Augusto C.},
   author={Van Schaftingen, Jean},
   title={Density of smooth maps for fractional Sobolev spaces $W^{s,p}$
   into $\ell$ simply connected manifolds when $s\geqslant1$},
   journal={Confluentes Math.},
   volume={5},
   date={2013},
   number={2},
   pages={3--22},
   issn={1793-7434},
}

\bib{BousquetPonceVanSchaftingen2015}{article}{ 
   author={Bousquet, Pierre},
   author={Ponce, Augusto C.},
   author={Van Schaftingen, Jean},
   title={Strong density for higher order Sobolev spaces into compact
   manifolds},
   journal={J. Eur. Math. Soc. (JEMS)},
   volume={17},
   date={2015},
   number={4},
   pages={763--817},
   issn={1435-9855},
}

\bib{BousquetPonceVanSchaftingen2017}{article}{
   author={Bousquet, Pierre},
   author={Ponce, Augusto C.},
   author={Van Schaftingen, Jean},
  title={Density of bounded maps in Sobolev spaces into complete manifolds}, 
  journal={Ann. Mat. Pura Appl. (4)},
  date={2017},
  doi={10.1007/s10231-017-0664-1},
}

\bib{BousquetPonceVanSchaftingen2018}{article}{
   author={Bousquet, Pierre},
   author={Ponce, Augusto C.},
   author={Van Schaftingen, Jean},
   title={Weak approximation by bounded Sobolev maps with values into complete manifolds},
   date={2017},
   eprint={arXiv:1701.07627},
}

\bib{Brezis1991}{article}{
   author={Brezis, Ha\"\i m},
   title={Relaxed energies for harmonic maps and liquid crystals},
   conference={
     title={International Symposium in honor of Renato Caccioppoli},
     address={Napoli},
     date={1989},
   },
   journal={Ricerche Mat.},
   volume={40},
   date={1991},
   number={suppl.},
   pages={163--173},
   issn={0035-5038},
}

\bib{Brezis1993}{article}{
   author={Brezis, Ha\"\i m},
   title={New energies for harmonic maps and liquid crystals},
   conference={
      title={Functional analysis and related topics},
      date={1991},
      address={Kyoto},
   },
   book={
      series={Lecture Notes in Math.},
      volume={1540},
      publisher={Springer, Berlin},
   },
   date={1993},
   pages={11--24},
}
    
\bib{Brezis2011}{book}{
   author={Brezis, Ha\"\i m},
   title={Functional analysis, Sobolev spaces and partial differential
   equations},
   series={Universitext},
   publisher={Springer}, 
   address={New York},
   date={2011},
   pages={xiv+599},
   isbn={978-0-387-70913-0},
}

\bib{BrezisCoronLieb1986}{article}{
   author={Brezis, Ha\"\i m},
   author={Coron, Jean-Michel},
   author={Lieb, Elliott H.},
   title={Harmonic maps with defects},
   journal={Comm. Math. Phys.},
   volume={107},
   date={1986},
   number={4},
   pages={649--705},
   issn={0010-3616},
}

\bib{BrezisLi2001}{article}{   
   author={Brezis, Haim},
   author={Li, Yanyan},
   title={Topology and Sobolev spaces},
   journal={J. Funct. Anal.},
   volume={183},
   date={2001},
   number={2},
   pages={321--369},
   issn={0022-1236},
}

\bib{BrezisMironescu2015}{article}{   
   author={Brezis, Ha\"\i m},
   author={Mironescu, Petru},
   title={Density in $W^{s,p}(\Omega;N)$},
   journal={J. Funct. Anal.},
   volume={269},
   date={2015},
   number={7},
   pages={2045--2109},
   issn={0022-1236},
}

\bib{ConventVanSchafingen2016}{article}{
   author={Convent, Alexandra},
   author={Van Schaftingen, Jean},
   title={Intrinsic co-local weak derivatives and Sobolev spaces between
   manifolds},
   journal={Ann. Sc. Norm. Super. Pisa Cl. Sci. (5)},
   volume={16},
   date={2016},
   number={1},
   pages={97--128},
   issn={0391-173X},
}

\bib{ConventVanSchafingen2017}{article}{
   author={Convent, Alexandra},
   author={Van Schaftingen, Jean},
   title={Higher order weak differentiability and Sobolev spaces between manifolds.},
   journal={Adv. Calc. Var.},
   doi={10.1515/acv-2017-0008},
}
\bib{diBenedetto2016}{book}{
   author={DiBenedetto, Emmanuele},
   title={Real analysis},
   series={Birkh\"auser Advanced Texts: Basler Lehrb\"ucher},
   edition={2},
   publisher={Birkh\"auser/Springer}, 
   address={New York},
   date={2016},
   pages={xxxii+596},
   isbn={978-1-4939-4003-5},
   isbn={978-1-4939-4005-9},
}
\bib{Dyda2004}{article}{
   author={Dyda, Bart\l omiej},
   title={A fractional order Hardy inequality},
   journal={Illinois J. Math.},
   volume={48},
   date={2004},
   number={2},
   pages={575--588},
   issn={0019-2082},
}

\bib{FerryWeinberger2013}{article}{
   author={Ferry, Steve},
   author={Weinberger, Shmuel},
   title={Quantitative algebraic topology and Lipschitz homotopy},
   journal={Proc. Natl. Acad. Sci. USA},
   volume={110},
   date={2013},
   number={48},
   pages={19246--19250},
   issn={1091-6490},
}

\bib{Gromov1999}{article}{
   author={Gromov, Mikhael},
   title={Quantitative homotopy theory},
   conference={
      title={Prospects in mathematics},
      address={Princeton, N.J.},
      date={1996},
   },
   book={
      publisher={Amer. Math. Soc.}, 
      address={Providence, R.I.},
   },
   date={1999},
   pages={45--49},
}

\bib{Hahn1922}{article}{
      author={Hahn, Hans},
       title={{\" U}ber folgen linearer {O}perationen},
        date={1922},
     journal={Monatsh. Math. Phys},
      volume={32},
       pages={3--88},
}

\bib{Hajlasz1994}{article}{
   author={Haj\l asz, Piotr},
   title={Approximation of Sobolev mappings},
   journal={Nonlinear Anal.},
   volume={22},
   date={1994},
   number={12},
   pages={1579--1591},
   issn={0362-546X},
}

\bib{hang2003topology}{article}{
   author={Hang, Fengbo},
   author={Lin, Fanghua},
   title={Topology of Sobolev mappings. II},
   journal={Acta Math.},
   volume={191},
   date={2003},
   number={1},
   pages={55--107},
   issn={0001-5962},
}

\bib{Hang:2003}{article}{
   author={Hang, Fengbo},
   author={Lin, Fanghua},
   title={Topology of Sobolev mappings. III},
   journal={Comm. Pure Appl. Math.},
   volume={56},
   date={2003},
   number={10},
   pages={1383--1415},
   issn={0010-3640},
}

\bib{HardtLin1987}{article}{
   author={Hardt, Robert},
   author={Lin, Fang-Hua},
   title={Mappings minimizing the $L^p$ norm of the gradient},
   journal={Comm. Pure Appl. Math.},
   volume={40},
   date={1987},
   number={5},
   pages={555--588},
   issn={0010-3640},
}
\bib{Igari1965}{article}{
   author={Igari, Satoru},
   title={Sur les fonctions qui op\`erent sur l'espace $\hat A^{2}$},
   journal={Ann. Inst. Fourier (Grenoble)},
   volume={15},
   date={1965},
   number={2},
   pages={525--536},
   issn={0373-0956},
}
\bib{Krasnoselskii1964}{book}{
   author={Krasnosel\cprime skii, M. A.},
   title={Topological methods in the theory of nonlinear integral equations},
   translator={A. H. Armstrong},
   publisher={Macmillan},
   address={New York},
   date={1964},
   pages={xi + 395},
}

\bib{MarcusMizel1979}{article}{
   author={Marcus, Moshe},
   author={Mizel, Victor J.},
   title={Complete characterization of functions which act, via
   superposition, on Sobolev spaces},
   journal={Trans. Amer. Math. Soc.},
   volume={251},
   date={1979},
   pages={187--218},
   issn={0002-9947},
}
\bib{MazowieckaStrzelecki2017}{article}{
   author={Mazowiecka, Katarzyna},
   author={Strzelecki, Pawe\l },
   title={The Lavrentiev gap phenomenon for harmonic maps into spheres holds
   on a dense set of zero degree boundary data},
   journal={Adv. Calc. Var.},
   volume={10},
   date={2017},
   number={3},
   pages={303--314},
   issn={1864-8258},
}
		
\bib{Merlet2006}{article}{
   author={Merlet, Beno\^\i t},
   title={Two remarks on liftings of maps with values into $S^1$},
   journal={C. R. Math. Acad. Sci. Paris},
   volume={343},
   date={2006},
   number={7},
   pages={467--472},
   issn={1631-073X},
}

\bib{mironescu2007sobolev}{article}{
title={Sobolev maps on manifolds: degree, approximation, lifting},
author={Mironescu, Petru},
journal={Contemporary mathematics},
volume={446},
pages={413--436},
date={2007}
}

\bib{Mironescu2008CRAS}{article}{
   author={Mironescu, Petru},
   title={Lifting default for $\mathbb{S}^1$-valued maps},
   journal={C. R. Math. Acad. Sci. Paris},
   volume={346},
   date={2008},
   number={19-20},
   pages={1039--1044},
   issn={1631-073X},
}

\bib{Mironescu2008}{article}{
 author={Mironescu, Petru},
 title={Lifting of $S^1$-valued maps in sums of Sobolev spaces},
 date={2008},
 eprint={hal:00747663},
}

\bib{Mironescu2010}{article}{
   author={Mironescu, Petru},
   title={Decomposition of $\mathbb{S}^1$-valued maps in Sobolev spaces},
   journal={C. R. Math. Acad. Sci. Paris},
   volume={348},
   date={2010},
   number={13-14},
   pages={743--746},
   issn={1631-073X},
}

\bib{Mironescu2015}{article}{
   author={Mironescu, Petru},
   title={Superposition with subunitary powers in Sobolev spaces},
   journal={C. R. Math. Acad. Sci. Paris},
   volume={353},
   date={2015},
   number={6},
   pages={483--487},
   issn={1631-073X},
}

\bib{MironescuMolnar2015}{article}{
   author={Mironescu, Petru},
   author={Molnar, Ioana},
   title={Phases of unimodular complex valued maps: optimal estimates, the
   factorization method, and the sum-intersection property of Sobolev
   spaces},
   journal={Ann. Inst. H. Poincar\'e Anal. Non Lin\'eaire},
   volume={32},
   date={2015},
   number={5},
   pages={965--1013},
   issn={0294-1449},
}

\bib{Mucci2012}{article}{
   author={Mucci, Domenico},
   title={Maps into projective spaces: liquid crystal and conformal
   energies},
   journal={Discrete Contin. Dyn. Syst. Ser. B},
   volume={17},
   date={2012},
   number={2},
   pages={597--635},
   issn={1531-3492},
}
      
\bib{Nash1956}{article}{
   author={Nash, John},
   title={The imbedding problem for Riemannian manifolds},
   journal={Ann. of Math. (2)},
   volume={63},
   date={1956},
   pages={20--63},
   issn={0003-486X},
}  

\bib{nguyen2008inequalities}{article}{
   title={Inequalities related to liftings and applications},
   author={Nguyen, Hoai-Minh},
   journal={C. R. Math. Acad. Sci. Paris},
   volume={346},
   number={17-18},
   pages={957--962},
   date={2008},
   publisher={Elsevier},
}

\bib{Pakzad2003}{article}{
   author={Pakzad, Mohammad Reza},
   title={Weak density of smooth maps in $W^{1,1}(M,N)$ for non-abelian
   $\pi_1(N)$},
   journal={Ann. Global Anal. Geom.},
   volume={23},
   date={2003},
   number={1},
   pages={1--12},
   issn={0232-704X},
}

\bib{PakzadRiviere2003}{article}{
   author={Pakzad, M. R.},
   author={Rivi\`ere, T.},
   title={Weak density of smooth maps for the Dirichlet energy between
   manifolds},
   journal={Geom. Funct. Anal.},
   volume={13},
   date={2003},
   number={1},
   pages={223--257},
   issn={1016-443X},
}

\bib{PetracheVanSchaftingen}{article}{
  title={Controlled singular extension of critical trace sobolev maps from spheres to compact manifolds},
  author={Petrache, Mircea},
  author={Van Schaftingen, Jean},
  journal={Int. Math. Res. Not. IMRN},
  date={2017},
  volume={2017},
  number={12}, 
  pages={3467--3683},
}

\bib{Riviere2000}{article}{
   author={Rivi\`ere, Tristan},
   title={Dense subsets of \(H^{1/2}(S^2,S^1)\)},
   journal={Ann. Global Anal. Geom.},
   volume={18},
   date={2000},
   number={5},
   pages={517--528},
   issn={0232-704X},
}

\bib{SchoenUhlenbeck1983}{article}{
   author={Schoen, Richard},
   author={Uhlenbeck, Karen},
   title={Boundary regularity and the Dirichlet problem for harmonic maps},
   journal={J. Differential Geom.},
   volume={18},
   date={1983},
   number={2},
   pages={253--268},
   issn={0022-040X},
}

\bib{Sokal2011}{article}{
   author={Sokal, Alan D.},
   title={A really simple elementary proof of the uniform boundedness
   theorem},
   journal={Amer. Math. Monthly},
   volume={118},
   date={2011},
   number={5},
   pages={450--452},
   issn={0002-9890},
}

\bib{Triebel1983}{book}{
   author={Triebel, Hans},
   title={Theory of function spaces},
   series={Monographs in Mathematics},
   volume={78},
   publisher={Birkh\"auser},
   address={Basel},
   date={1983},
   pages={284},
   isbn={3-7643-1381-1},
}
    
\end{biblist}
\end{bibdiv}

\end{document}